\definecolor{darkgoldenrod}{rgb}{0.72, 0.53, 0.04}
\definecolor{vegasgold}{rgb}{0.77, 0.7, 0.35}
\definecolor{gold(metallic)}{rgb}{0.83, 0.69, 0.22}
\definecolor{sepia}{rgb}{0.44, 0.26, 0.08}
\definecolor{silver}{rgb}{0.75, 0.75, 0.75}
\DeclareSymbolFont{cyrletters}{OT2}{wncyr}{m}{n}
\DeclareMathSymbol{\Sha}{\mathalpha}{cyrletters}{"58}
\newcommand\Q{{\mathbb Q}}
\newcommand\C{{\mathbb C}}
\newcommand\Z{{\mathbb Z}}
\newcommand\F{{\mathbb F}}
\newcommand\cF{{\mathcal F}}
\newcommand\cE{{\mathcal E}}
\newcommand{\op}[1]{\operatorname{#1}}
\newcommand{\Selp}{\Sel_{p^{\infty}}(E/\Q_{\cyc})}
\DeclareMathOperator{\corank}{corank}
\DeclareMathOperator{\trace}{trace}
\DeclareMathOperator{\ns}{ns}
\DeclareMathOperator{\li}{li}
\DeclareMathOperator{\rank}{rank}
\DeclareMathOperator{\Sel}{Sel}
\DeclareMathOperator{\cond}{cond}
\DeclareMathOperator{\Gal}{Gal}
\DeclareMathOperator{\GL}{GL}
\DeclareMathOperator{\Frob}{Frob}
\DeclareMathOperator{\image}{Image}
\DeclareMathOperator{\cyc}{cyc}
\DeclareMathOperator{\ord}{ord}
\DeclareMathOperator{\super}{ss}
\DeclareMathOperator{\spl}{split}
\DeclareMathOperator{\tors}{tors}
\DeclareMathOperator{\height}{height}
\DeclareMathOperator{\Id}{Id}
\newtheorem*{Theorem*}{Theorem}
\newtheorem{Th}{Theorem}[section]
\newtheorem{Lemma}[Th]{Lemma}
\newtheorem*{Ques*}{Question}
\newtheorem{Prop}[Th]{Proposition}
\newtheorem{Cor}[Th]{Corollary}
\newtheorem*{Conj*}{Conjecture}
\newtheorem{lthm}{Theorem}
\newtheorem{lcor}{Corollary}
\theoremstyle{remark}
\newtheorem*{Rem*}{Remark}
\theoremstyle{definition}
\newtheorem{Defi}[Th]{Definition}
\newcommand\mtx[4] { \left( {\begin{array}{cc}
   #1 & #2 \\
   #3 & #4 \\
  \end{array} } \right)}
\begin{document}
\title[Growth Questions in $\Z/p\Z$-Extensions]{Rank Jumps and Growth of Shafarevich--Tate Groups for Elliptic Curves in $\Z/p\Z$-Extensions}

\author[L.~Beneish]{Lea Beneish}
\address[Beneish]{Department of Mathematics, University of California, Berkeley\\ 
970 Evans Hall\\
Berkeley, CA 94720, USA.}
\email{leabeneish@berkeley.edu}

\author[D.~Kundu]{Debanjana Kundu}
\address[Kundu]{222 College Street \\ Fields Institute \\ ON, M5T 3J1, Canada.}
\email{dkundu@math.toronto.edu}

\author[A.~Ray]{Anwesh Ray}
\address[Ray]{Centre de recherches math{\'e}matiques,
Universit{\'e} de Montr{\'e}al,
Pavillon Andr{\'e}-Aisenstadt,
2920 Chemin de la tour,
Montr{\'e}al (Qu{\'e}bec) H3T 1J4, Canada}
\email{anwesh.ray@umontreal.ca}

\subjclass[2020]{11R23 (primary); 11G05, 11R34 (secondary) }
\keywords{rank growth, Selmer group, Shafarevich--Tate group, $\lambda$-invariant, Kida's formula}

\date{\today}

\begin{abstract}
Let $p$ be a prime.
In this paper, we use techniques from Iwasawa theory to study questions about rank jump of elliptic curves in cyclic extensions of degree $p$.
We also study growth of the $p$-primary Selmer group and the Shafarevich--Tate group in cyclic degree-$p$ extensions and improve upon previously known results in this direction.
\end{abstract}

\maketitle

\section{Introduction}
A fundamental result in the theory of elliptic curves is the \emph{Mordell--Weil Theorem}.
It states that given an elliptic curve $E$ defined over a number field $F$, its $F$-rational points form a finitely generated abelian group, i.e.,
\[
E(F) \simeq \Z^r \oplus E(F)_{\tors}
\]
where $r$ is a non-negative integer called the \emph{rank} and $E(F)_{\tors}$ is a finite group, called the \emph{torsion subgroup}.
Over $\Q$, the possible structures of $E(\Q)_{\tors}$ is known by the work of B.~Mazur (see \cite{Maz77, Maz78}).
These techniques have been extended by S.~Kamienny--M.~Kenku--F.~Momose (see for example \cite{KM88, Kam92}) to provide the complete classification of torsion subgroups for quadratic fields.
More recently, in a series of work by several authors the classification of torsion subgroups for cubic fields has been completed (see \cite{ jeon2004torsion, jeon2011families, wang2015torsion, najman2016torsion, bruin2016criterion, derickx2018torsion, derickx2021sporadic}).
In \cite{Mer96}, L.~Merel proved that for elliptic curves over any number field, the bound on the order of the torsion subgroup depends only on the degree of the number field.
Despite remarkable advances made towards understanding the torsion subgroup, the rank remains mysterious and to-date there is no known algorithm to compute it.
Almost always, obtaining information about the rank of the elliptic curve involves studying the \emph{Selmer group}.

Let $p$ be a prime number.
In \cite{Maz72}, Mazur initiated the study of $p$-primary Selmer groups of elliptic curves in $\Z_p$-extensions.
Since then, Iwasawa theory of elliptic curves has been successfully used by many authors to study rank growth in towers of number fields.
However, the scope of this paper is different.
We use results from Iwasawa theory of Selmer groups of elliptic curves to obtain results on rank growth in cyclic degree-$p$ extensions. 
Let $L/\Q$ be a finite extension and $E$ an elliptic curve over $\Q$.
Mazur and Rubin \cite{MR18} define $E$ to be \emph{diophantine-stable} in $L$ if $E(L)=E(\Q)$.
This property is of significant importance and has applications to Hilbert's $10^{\text{th}}$ problem for number fields.
In this paper, we answer the following two questions about growth of Selmer groups in cyclic degree-$p$ extensions.
\begin{enumerate}
\item Given an elliptic curve $E_{/\Q}$ with trivial $p$-primary Selmer group, for what proportion of degree-$p$ cyclic extensions does the $p$-primary Selmer group remain trivial upon base-change?
\item Given a prime $p\neq 2,3$, varying over all elliptic curves defined over $\Q$, for what proportion of elliptic curves does there exist \emph{at least one} $\Z/p\Z$-extension where the $p$-primary Selmer group remains trivial upon base-change?
\end{enumerate}
The proportion of elliptic curves is computed with respect to height (see \eqref{eqn: height defi}).
Our results are proven for elliptic curves $E_{/\Q}$ with Mordell-Weil rank $0$.
Using standard arguments from Iwasawa theory, one can show that the $p$-primary Selmer group has rank $0$ over the cyclotomic $\Z_p$-extension of a number field $L$ (in particular, over $L$ itself) if and only if the associated $\mu$-invariant and $\lambda$-invariant are trivial (see for example \cite[Corollary 3.6]{KR21}).
Controlling the $\mu$-invariant is relatively easy and it is known to behave well in $p$-extensions (see Theorem \ref{thm: Kida formula}).
So the key idea involves showing how often the $\lambda$-invariant remains trivial upon base-change to $L/\Q$. 
Given $n\geq 0$, let $L_n$ denote the cyclotomic $\Z/p^n\Z$ extension of $L$.
This is the unique $\Z/p^n\Z$-extension of $L$ contained inside $L(\mu_{p^\infty})$.
We note that we have at this point in our notation suppressed the dependence on the prime $p$.
Since $\lambda_p(E/L)\geq \rank_{\Z}(E(L_n))$ (see Lemma \ref{lambda geq rank}), proving triviality of the $\lambda$-invariant upon base-change implies the rank does not change in $L$.
In fact, we prove a stronger result, and show that the rank does not in fact increase in $L_n$ for all $n\geq 0$.
To answer the first question, we prove the following result.

\begin{lthm}\label{thm A}
Let $E_{/\Q}$ be a non-CM elliptic curve and $p$ be a fixed prime number $\geq 5$ such that the residual representation at $p$ is surjective.
Further suppose that $\mu_p(E/\Q) = \lambda_p(E/\Q) =0$.
Then, there are infinitely many $\Z/p\Z$-extensions of $\Q$ in which the $\lambda$-invariant does not increase.
In particular, in infinitely many cyclic degree-$p$ number fields $L/\Q$ the rank does not grow in $L_n$ for all $n\geq 0$, i.e., 
\[
\rank_{\Z} E(\Q) = \rank_{\Z} E(L_n) \text{ for all } n\geq 0.
\]
\end{lthm}

In the case that $E_{/\Q}$ is a CM elliptic curve, we can prove a similar result under an additional independence hypothesis which we make precise in \ref{assmpn: enemy primes for CM}. 

The condition $\lambda_p(E/\Q)=0$ can only be satisfied for elliptic curves $E_{/\Q}$ of Mordell-Weil rank $0$, and thus the above result shows that the rank remains $0$ in the fields $L_n$.

Even though our proof suggests that the $\lambda$-invariant does not jump in \emph{many} $\Z/p\Z$-extensions, we are unable to show that this is true for a positive proportion of $\Z/p\Z$-extensions.
It is known by the work of R.~Greenberg \cite[Theorem 5.1]{Gre99} that given a rank 0 elliptic curve $E$ over $\Q$, for \emph{density one} good ordinary primes $\mu_p(E/\Q) =\lambda_p(E/\Q)=0$.

In a recent paper (see \cite{GJN20}), E.~Gonz{\'a}lez-Jim{\'e}nez and F.~Najman investigated the question of when the torsion group does not grow upon base-change (see Theorem \ref{result from GJN} for the precise statement).
This, when combined with the above theorem, allows us to comment on when the Mordell--Weil group \emph{does not} grow in $\Z/p\Z$-extensions.
More precisely,
\begin{lcor}
Given a non-CM elliptic curve $E_{/\Q}$ and a prime $p>7$ such that the residual representation at $p$ is surjective and $\mu_p(E/\Q) = \lambda_p(E/\Q) =0$, there are infinitely many $\Z/p\Z$-extensions $L/\Q$ such that 
\[
E(L)=E(\Q).
\].
\end{lcor}
The same assertion holds for elliptic curves with CM provided \ref{assmpn: enemy primes for CM} holds.

A natural extension of the previous question is the following: when does the $p$-primary Selmer group grow upon base-change?
We address this question in Section \ref{Appendix}.
First, we prove a result regarding the growth of the $p$-primary Selmer group of an elliptic curve $E_{/\Q}$ upon base-change up a $\Z/p\Z$-extension (see Proposition \ref{LKR}).
This result gives a criterion for either the rank to jump, or the order of the Shafarevich--Tate group to increase upon base-change.
It applies to all primes $p\geq5$ and the method relies on exploiting the relationship between Iwasawa invariants and the Euler characteristic.
It is motivated by ideas from \cite{Raysujatha1, Raysujatha2}, as well as the use of Kida's formula (Theorem \ref{thm: Kida formula}).
This criterion is then used to show that the Selmer group becomes non-zero in a large family of $\Z/p\Z$-extensions.
Given an elliptic curve $E_{/\Q}$ and a prime $p$, we set $E[p]$ to denote the $p$-torsion subgroup of $E(\bar{\Q})$. The residual representation at $p$ refers to the Galois representation
\[\bar{\rho}:\Gal(\bar{\Q}/\Q)\rightarrow \GL_2(\F_p)\]
on $E[p]$.
More precisely, we can prove the following result.
\begin{lthm}
\label{Anwesh's thm}
Let $p\geq 5$ be a fixed prime and $E_{/\Q}$ be an elliptic curve with good ordinary reduction at $p$.
Suppose that the image of the residual representation is surjective, the $p$-primary Selmer group over $\Q$ is trivial, and $\mu_p(E/\Q)=\lambda_p(E/\Q)=0$.
Then, there is a set of primes of the form $q\equiv 1\pmod p$ with density at least $\frac{p}{(p-1)^2(p+1)}$ such that the $p$-primary Selmer group becomes non-trivial in the unique $\Z/p\Z$-extension contained in $\Q(\mu_q)$.
\end{lthm}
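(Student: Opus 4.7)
\emph{Proof sketch.} The plan is to combine Chebotarev's density theorem applied to the mod-$p$ representation $\rho_{E,p}\colon G_{\Q}\to \GL_2(\F_p)$ with an analysis of the ramification data appearing in Kida's formula. Consider the subset
\[
T=\{A\in\GL_2(\F_p):\op{tr}(A)=2,\ \det(A)=1\}.
\]
A direct count shows $T$ consists of the identity together with the unique $\GL_2(\F_p)$-conjugacy class of non-trivial unipotent elements, so $|T|=1+(p^2-1)=p^2$. Since $|\GL_2(\F_p)|=p(p-1)^2(p+1)$, this gives $|T|/|\GL_2(\F_p)|=p/((p-1)^2(p+1))$. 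Because $\rho_{E,p}$ is surjective, Chebotarev applied to $\Q(E[p])/\Q$ furnishes a set of primes $q$ of this density with $\rho_{E,p}(\Frob_q)\in T$. For such $q$ the determinant condition (which is the mod-$p$ cyclotomic character) forces $q\equiv 1\pmod p$, and the trace condition gives $a_q\equiv 2\pmod p$; hence $|E(\F_q)|=q+1-a_q\equiv 0\pmod p$, so $E(\F_q)$ contains a point of order $p$. Removing the finitely many primes of bad reduction or equal to $p$ does not affect the density, and the congruence $q\equiv 1\pmod p$ guarantees the existence of the unique $\Z/p\Z$-subextension $F_q\subset \Q(\mu_q)$.

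Next I would apply Kida's formula (Theorem \ref{thm: Kida formula}) to $F_{q,\cyc}/\Q_{\cyc}$. By construction $F_q/\Q$ is totally ramified at $q$ and unramified elsewhere, and since $q\neq p$ while $\Q_{\cyc}/\Q$ is unramified outside $p$, the extension $F_{q,\cyc}/\Q_{\cyc}$ is cyclic of degree $p$ and is ramified exactly at the primes of $F_{q,\cyc}$ above $q$, each with ramification index $p$. Because $\mu(E/\Q)=\lambda(E/\Q)=0$, Kida's formula yields $\mu(E/F_{q,\cyc})=0$ and expresses $\lambda(E/F_{q,\cyc})$ as a sum of non-negative local contributions at these ramified primes. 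The relevant local condition (good reduction of $E$ together with a non-trivial $p$-torsion point over the residue field) holds at every $w\mid q$: $E$ has good reduction at $q$, and the residue field at $w$ contains $\F_q$, which already contains a non-trivial $p$-torsion point of $E$ by the trace-determinant condition. Each such $w$ therefore contributes at least $e(w\mid \Q_{\cyc})-1=p-1$, giving $\lambda(E/F_{q,\cyc})\geq p-1>0$. The characterization recalled in the introduction then implies $\Sel_{p^{\infty}}(E/F_q)\neq 0$, as claimed.

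The main obstacle is verifying that the ramified primes $w\mid q$ of $F_{q,\cyc}$ genuinely lie in the set of primes contributing positively in Kida's formula. This reduces to the local assertion $E(\kappa(w))[p]\neq 0$, which the prescribed condition on $\rho_{E,p}(\Frob_q)$ arranges by construction: the characteristic polynomial $(x-1)^2$ forces Frobenius at $q$ to have an eigenvalue $1$ on $E[p]$, yielding a $p$-torsion point already over $\F_q$. A secondary subtlety is that the density computation must be performed in $\GL_2(\F_p)$ rather than $\SL_2(\F_p)$, where the non-trivial unipotent class splits in two; working directly in $\GL_2(\F_p)$ gives the clean count of $p^2$ elements and matches the target density exactly.
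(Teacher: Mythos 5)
Your Chebotarev half is exactly the paper's argument: the set of $\sigma$ with $\op{trace}=2$, $\det=1$ has $p^2$ elements, surjectivity of $\bar\rho$ identifies it with a union of conjugacy classes in $\GL_2(\F_p)$, and $\op{Frob}_q$ landing there is equivalent to $q\equiv 1\pmod p$ and $p\mid \#\widetilde{E}(\F_q)$, giving the density $p/((p-1)^2(p+1))$. The Kida computation showing $\lambda(E/L_q)>0$ for such $q$ also appears in the paper. The problem is your last step. The inference ``$\lambda(E/L_q)>0\Rightarrow \Sel_{p^\infty}(E/L_q)\neq 0$'' is not justified, and the characterization you invoke does not give it: that statement (\cite[Corollary 3.6]{KR21}, quoted in the introduction) says $\mu(E/L)=\lambda(E/L)=0$ if and only if $\Sel_{p^\infty}(E/L_{\cyc})=0$, and triviality over $L_{\cyc}$ implies triviality over $L$; the contrapositive you need runs in the opposite direction. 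A positive $\lambda$-invariant only tells you the Selmer group over the cyclotomic $\Z_p$-extension of $L_q$ is nonzero, and this can happen with $\Sel_{p^\infty}(E/L_q)=0$: the $\lambda$-growth can be absorbed entirely by local terms (Tamagawa numbers divisible by $p$ after base change, or anomalous behaviour at primes above $p$), which is precisely why the theorem is not an immediate consequence of Kida's formula.

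The paper closes exactly this gap with the Euler characteristic criterion (Proposition \ref{LKR}): assume for contradiction that $\op{rank}E(L_q)=0$ and $\Sha(E/L_q)[p^\infty]=0$, so that $\chi\left(L_{q,\cyc}/L_q,E[p^\infty]\right)$ is defined and given by the formula of Theorem \ref{boringthm1}; then one must check that every local factor is prime to $p$ --- the Tamagawa contributions at the tamely ramified prime are controlled via Kodaira types (Lemma \ref{boringlemma}, trivially satisfied here since $E$ has good reduction at $q$), and the factor $\#\widetilde{E}(k_v)[p^\infty]$ at $v\mid p$ is handled by the $p$-extension residue field argument, using that $\chi(\Q_{\cyc}/\Q,E[p^\infty])=1$ forces $\widetilde{E}(\F_p)[p]=0$. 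This yields $\chi(L_{q,\cyc}/L_q,E[p^\infty])=1$, hence $\lambda(E/L_q)=0$ by Proposition \ref{anweshprop}, contradicting the Kida lower bound; only then does $\Sel_{p^\infty}(E/L_q)\neq 0$ follow (together with $E(L_q)[p^\infty]=0$, which requires the hypothesis $E(\Q)[p^\infty]=0$ that your sketch never uses). Without some version of this descent from the cyclotomic level to the number field $L_q$, your proof is incomplete.
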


However, we remark that this method is unable to distinguish between jumps in rank and jumps in the order of the Shafarevich--Tate group.
These methods are currently being refined by the third named author, and in a subsequent paper, will be applied to a large number of problems in Diophantine stability and arithmetic statistics.

In response to the second question, we show the following.

\begin{lthm}
Suppose that the Shafarevich--Tate group is finite for all rank 0 elliptic curves defined over $\Q$ and \ref{assmpn: for rank 0 indep} holds.
For a positive proportion of rank 0 elliptic curves defined over $\Q$, there is \emph{at least one} $\Z/p\Z$-extension over $\Q$ disjoint from the cyclotomic $\Z_p$-extension, $\Q_{\cyc}/\Q$, with trivial $p$-primary Selmer group upon base-change.
\end{lthm}

\ref{assmpn: for rank 0 indep} is the assumption that the proportion of rank 0 elliptic curves (ordered by height) with a fixed reduction type at a prime $q$ is the same as the proportion of all elliptic curves (ordered by height) with the same property.
In other words, the reduction type at $q$ is independent of the rank of the elliptic curve.

We remark that the main reason for us to restrict the study to elliptic curves at primes of good ordinary reduction is to ensure that we can use results on $\lambda$-invariants of elliptic curves from \cite{HM99}.
There are recent results which extend the aforementioned theorem to the non-ordinary case.
It seems reasonable to expect that our results should extend to the supersingular case using the work of J.~Hatley and A.~Lei in \cite[Theorem 6.7]{HL19}.

In \cite{Ces17}, K.~{\v{C}}esnavi{\v{c}}ius showed that the $p$-Selmer group of elliptic curves over a number field becomes arbitrarily large when varying over $\Z/p\Z$-extensions.
This result is not surprising, as it is widely believed that the growth of ideal class groups and Selmer groups of elliptic curves are often analogous.
The unboundedness of the $2$-part of the ideal class group in quadratic extensions goes back to Gauss; for its generalization to an odd prime see \cite[VII-12, Theorem 4]{BCHIS66}.
The \emph{rank boundedness conjecture} for elliptic curves asks whether there is an upper bound for the Mordell--Weil rank of elliptic curves over number fields.
This question is widely open and experts are unable to come to a consensus on what to expect, see \cite[Section 3]{PPVW19} for a historical survey.
Those in favour of unboundedness argue that this phenomenon provably occurs in other global fields, and that the proven lower bound for this upper bound increases every few years.
For instance, N.~Elkies discovered an elliptic curve over $\Q$ with Mordell--Weil rank at least 28.
On the other hand, a recent series of papers by B.~Poonen et. al. provides a justified heuristic inspired by ideas from arithmetic statistics which suggests otherwise (see for example, \cite{Poo18}).

The interplay between the rank, Selmer group, and the Shafarevich--Tate group raises the question of producing elliptic curves with ``large Shafarevich--Tate groups.''
More precisely, given a number field $F$, a prime $p$, and a positive integer $n$, does there exist an elliptic curve $E_{/F}$ whose Shafarevich--Tate group contains at least $n$ elements of order $p$?
A dual question one can ask is the following: given $E_{/\Q}$, a positive integer $n$, and a prime $p$, does there exist a number field $F/\Q$ (with additional properties) such that the Shafarevich--Tate group, denoted by $\Sha(E/F)$, contains at least $n$ elements of order $p$?

In the final section, we study growth questions for Shafarevich--Tate groups.
When $p=2$, K.~Matsuno showed that the 2-rank of the Shafarevich--Tate group becomes arbitrarily large in quadratic extensions of number fields (see \cite{Mat09}).
In Theorem \ref{thm: effective matsuno unconditional} we prove an effective version of this theorem for elliptic curves without any exceptional primes.
In particular, given $n>0$ we find an upper bound on the minimal conductor of the quadratic extension (say $K$) such that $\rank_2 \Sha(E/K) := \dim_{\mathbb{F}_2}\Sha(E/K)[2]>n$.
Recently it has been shown in \cite{MP21} that there are infinitely many elliptic curves with large $\rank_2 \Sha(E/K)$.
When $p\neq 2$, P.~Clark and S.~Sharif showed that varying over all degree-$p$ extensions of $\Q$, not necessarily Galois, the $p$-rank of the Shafarevich--Tate group can become arbitrarily large, see \cite{CS10}.
They further raised a question as to whether this result is the best possible.
In Section \ref{subsection: arbitrarily large Sha in odd p case}, we show that if a conjecture of C.~David--J.~Fearnley--H.~Kisilevsky is true (see \S \ref{subsection: conjectures} for the precise statement), then the result of Clark--Sharif can be improved.
In particular, instead of varying over all degree-$p$ extensions, it suffices to vary over all $\Z/p\Z$-extensions of $\Q$.

\emph{Organization:} Including this Introduction, the article has 6 sections.
Section \ref{section: preliminaries} is preliminary in nature; after introducing the main objects of interest, we record relevant results from Iwasawa theory.
We also mention some conjectures and heuristics on rank growth of elliptic curves in Galois extensions.
In Section \ref{section: no rank jump}, we use techniques from Iwasawa theory to prove results on rank jump of elliptic curves in cyclic degree-$p$ extensions.
In Section \ref{Appendix}, we study arithmetic statistics related questions pertaining to the growth of the $p$-primary Selmer groups in $\Z/p\Z$-extensions.
In Section \ref{section: growth of Sha}, we study the growth of the $p$-rank of Shafarevich--Tate group in cyclic degree-$p$ extensions.
One of our results on rank growth requires a mild hypothesis, we provide computational evidence for the same.
The table is included in Section \ref{section: Tables}.

\section{Preliminaries}
\label{section: preliminaries}

Let $F$ be a number field and $E_{/F}$ be an elliptic curve defined over $F$.
Fix an algebraic closure of $F$ and write $G_F$ for the absolute Galois group $\Gal(\overline{F}/F)$.
For a given integer $m$, set $E[m]$ to be the Galois module of all $m$-torsion points in $E(\overline{F})$.
If $v$ is a prime in $F$, we write $F_v$ for the completion of $F$ at $v$.
The main object of interest is the Selmer group.
\begin{Defi}
For any integer $m\geq 2$, the \emph{$m$-Selmer group} is defined as follows
\[
\Sel_{m}(E/F) = \ker\left( H^1\left( G_{F}, E[m]\right) \rightarrow \prod_{v}H^1\left(G_{F_v}, E \right)[m]\right).
\]
\end{Defi}
\noindent This $m$-Selmer group fits into the following short exact sequence
\begin{equation}
\label{ses: m selmer}
0 \rightarrow E(F)/m E(F)\rightarrow \Sel_m(E/F) \rightarrow \Sha(E/F)[m] \rightarrow 0.
\end{equation}
Here, $\Sha(E/F)$ is the \emph{Shafarevich--Tate group} which is conjecturally finite.
Throughout this article, we will assume the finiteness of the Shafarevich--Tate group.

\subsection{Recollections from Iwasawa theory}
For details, we refer the reader to standard texts in Iwasawa theory (e.g. \cite[Chapter 13]{Was97}). 
Let $p$ be a fixed prime.
Consider the (unique) \emph{cyclotomic} $\Z_p$-extension of $\Q$, denoted by $\Q_{\cyc}$.
Set $\Gamma:=\Gal(\Q_{\cyc}/\Q)\simeq \Z_p$.
The \emph{Iwasawa algebra} $\Lambda$ is the completed group algebra $\Z_p\llbracket \Gamma \rrbracket :=\varprojlim_n \Z_p[\Gamma/\Gamma^{p^n}]$.
Fix a topological generator $\gamma$ of $\Gamma$; there is the following isomorphism of rings 
\begin{align*}
\Lambda&\xrightarrow{\sim}\Z_p\llbracket T\rrbracket \\
\gamma &\mapsto 1 +T.
\end{align*}

Let $M$ be a cofinitely generated cotorsion $\Lambda$-module.
The \emph{Structure Theorem of $\Lambda$-modules} asserts that the Pontryagin dual of $M$, denoted by $M^{\vee}$, is pseudo-isomorphic to a finite direct sum of cyclic $\Lambda$-modules.
In other words, there is a map of $\Lambda$-modules
\[
M^{\vee}\longrightarrow  \left(\bigoplus_{i=1}^s \Lambda/(p^{m_i})\right)\oplus \left(\bigoplus_{j=1}^t \Lambda/(h_j(T)) \right)
\]
with finite kernel and cokernel.
Here, $m_i>0$ and $h_j(T)$ is a distinguished polynomial (i.e., a monic polynomial with non-leading coefficients divisible by $p$).
The \emph{characteristic ideal} of $M^\vee$ is (up to a unit) generated by the \emph{characteristic element},
\[
f_{M}^{(p)}(T) := p^{\sum_{i} m_i} \prod_j h_j(T).
\]
The $\mu$-invariant of $M$ is defined as the power of $p$ in $f_{M}^{(p)}(T)$.
More precisely,
\[
\mu(M) = \mu_p(M):=\begin{cases}0 & \textrm{ if } s=0\\
\sum_{i=1}^s m_i & \textrm{ if } s>0.
\end{cases}
\]
The $\lambda$-invariant of $M$ is the degree of the characteristic element, i.e.
\[
\lambda(M) = \lambda_p(M) := \sum_{j=1}^t \deg h_j(T).
\]

Let $E_{/F}$ be an elliptic curve with good reduction at $p$.
We shall assume throughout that the prime $p$ is odd.
Let $N=N_E$ denote the conductor of $E$ and denote by $S$ the (finite) set of primes which divide $Np$.
Let $F_S$ be the maximal algebraic extension of $F$ which is unramified at the primes $v\notin S$.
Set $E[p^\infty]$ to be the Galois module of all $p$-power torsion points in $E(\overline{F})$.
For a prime $v\in S$ and any finite extension $L/F$ contained in the unique cyclotomic $\Z_p$-extension of $F$ (denoted by $F_{\cyc}$), write
\[
J_v(E/L) = \bigoplus_{w|v} H^1\left( G_{L_w}, E\right)[p^\infty]
\]
where the direct sum is over all primes $w$ of $L$ lying above $v$.
Then, the \emph{$p$-primary Selmer group over $L$} is defined as follows
\[
\Sel_{p^\infty}(E/L):=\ker\left\{ H^1\left(\Gal\left(F_S/L\right),E[p^{\infty}]\right)\longrightarrow \bigoplus_{v\in S} J_v(E/L)\right\}.
\]
It is easy to see that $\Sel_{p^\infty}(E/L) = \varinjlim_{n}\Sel_{p^n}(E/L)$, see for example \cite[\S 1.7]{CS00book}.
By taking direct limits of \eqref{ses: m selmer}, the $p$-primary Selmer group over $F$ fits into a short exact sequence
\begin{equation}
\label{sesSelmer}
0\rightarrow E(F)\otimes \Q_p/\Z_p\rightarrow \Sel_{p^{\infty}}(E/F)\rightarrow \Sha(E/F)[p^{\infty}]\rightarrow 0.
\end{equation}
Next, define
\[
J_v(E/F_{\cyc}) = \varinjlim J_v(E/L)
\]
where $L$ ranges over finite extensions contained in $F_{\cyc}$ and the inductive limit is taken with respect to the restriction maps.
The \emph{$p$-primary Selmer group over $F_{\cyc}$} is defined as follows

\[
\Sel_{p^\infty}(E/F_{\cyc}):=\ker\left\{ H^1\left(\Gal\left(F_S/F_{\cyc}\right),E[p^{\infty}]\right)\longrightarrow \bigoplus_{v\in S} J_v(E/F_{\cyc})\right\}.
\]
When $E$ is an elliptic curve defined over $\Q$, $p$ is an odd prime of good \emph{ordinary} reduction, and $F/\Q$ is an abelian extension, K.~Kato proved (see \cite[Theorem 14.4]{Kat04}) that the $p$-primary Selmer group $\Sel_{p^\infty}(E/F_{\cyc})$ is a cofinitely generated cotorsion $\Lambda$-module.
Therefore, in view of the Structure Theorem of $\Lambda$-modules, we can define the $\mu$ and $\lambda$-invariants, which we denote as $\mu_p(E/F)$ and $\lambda_p(E/F)$, respectively.

Given a number field $F$, we set $F_{\cyc}$ to be the composite of $F$ with $\Q_{\cyc}$.
It is the unique $\Z_p$-extension of $F$ which is contained in the infinite cyclotomic field $F(\mu_{p^\infty})$.
Given $n\geq 0$, let $F_n$ be the subfield of $F_{\cyc}$ such that $[F_n:F]=p^n$.
The following lemma relating the $\lambda$-invariant of the Selmer group to the rank of the elliptic curve is well-known but we include it for the sake of completeness. 
\begin{Lemma}
\label{lambda geq rank}
Let $E_{/F}$ be an elliptic curve and assume that $\Sel_{p^\infty}(E/F_{\cyc})$ is cotorsion as a $\Lambda$-module, and let $n\geq 0$.
Then, $\lambda_p(E/F)\geq \rank_{\Z} (E(F_n))$.
\end{Lemma}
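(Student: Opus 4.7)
The plan is to chain three inequalities: $\rank_{\Z} E(F) \leq \corank_{\Z_p} \Sel_{p^\infty}(E/F) \leq \corank_{\Z_p} \Sel_{p^\infty}(E/F_{\cyc})^{\Gamma} \leq \lambda(E/F)$.

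First, I would extract the left-hand inequality directly from the short exact sequence \eqref{sesSelmer}. Since the finiteness of $\Sha(E/F)$ is being assumed throughout, the quotient $\Sha(E/F)[p^\infty]$ is finite, hence has $\Z_p$-corank zero. Meanwhile $E(F) \otimes \Q_p/\Z_p$ has $\Z_p$-corank equal to $\rank_{\Z} E(F)$, so $\corank_{\Z_p} \Sel_{p^\infty}(E/F) \geq \rank_{\Z} E(F)$ (in fact, equality, though only the inequality is needed here).

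Next, to pass from $F$ up to $F_{\cyc}$, I would invoke the inflation-restriction sequence to compare $\Sel_{p^\infty}(E/F)$ with $\Sel_{p^\infty}(E/F_{\cyc})^\Gamma$. The kernel of the natural restriction map is controlled by $H^1(\Gamma, E(F_{\cyc})[p^\infty])$, which is finite because $\Gamma \cong \Z_p$ acts on the finite group $E(F_{\cyc})[p^\infty]$ (finiteness of the latter is standard for elliptic curves over the cyclotomic tower). Finiteness of the kernel implies $\corank_{\Z_p} \Sel_{p^\infty}(E/F) \leq \corank_{\Z_p} \Sel_{p^\infty}(E/F_{\cyc})^{\Gamma}$.

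Finally, I would use the structure theorem for cotorsion $\Lambda$-modules to bound $\corank_{\Z_p} M^\Gamma$ in terms of $\lambda(M)$, where $M = \Sel_{p^\infty}(E/F_{\cyc})$. Passing to Pontryagin duals, taking $\Gamma$-invariants on $M$ corresponds to taking the quotient $M^\vee / T M^\vee$; and under the pseudo-isomorphism $M^\vee \sim \bigoplus_i \Lambda/(p^{m_i}) \oplus \bigoplus_j \Lambda/(h_j(T))$, each summand contributes at most one to the $\Z_p$-rank of this quotient (specifically, the summands $\Lambda/(h_j)$ with $h_j(0) = 0$ contribute a copy of $\Z_p$, and all others contribute a finite group). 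Each such contribution is $\leq \deg h_j$, so summing gives $\corank_{\Z_p} M^\Gamma \leq \sum_j \deg h_j = \lambda(E/F)$, since the $\mu$-part contributes nothing to the $\Z_p$-corank. Concatenating the three inequalities yields the lemma. The step requiring the most care is the final one, where one must properly account for the pseudo-isomorphism (finite kernel/cokernel do not affect $\Z_p$-coranks) and correctly compute the action of $T$ on each elementary summand.
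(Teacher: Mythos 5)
Your proof is correct and follows essentially the same route as the paper: both establish the chain $\rank_{\Z} E(F) \leq \corank_{\Z_p}\Sel_{p^\infty}(E/F) \leq \corank_{\Z_p}\Sel_{p^\infty}(E/F_{\cyc})^\Gamma \leq \lambda(E/F)$, using the short exact sequence \eqref{sesSelmer}, Mazur's Control Theorem (you unwind the finiteness of the kernel via inflation--restriction and finiteness of $E(F_{\cyc})[p^\infty]$, which is precisely the content the paper cites), and the structure theory of $\Lambda$-modules.
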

\begin{proof}
Denote by $\Gamma_n$ the Galois group $\Gal(F_{\cyc}/F_n)$ and let $r_n$ denote the $\Z_p$-corank of $\Sel_{p^\infty}(E/F_{\cyc})^{\Gamma_n}$.
Since $\Sel_{p^\infty}(E/F_{\cyc})$ is cotorsion over the Iwasawa algebra, it has finite $\Z_p$-corank, and it is an easy consequence of the structure theorem that
\[
\lambda_p(E/F)=\corank_{\Z_p}\left( \Sel_{p^\infty}(E/F_{\cyc}) \right).
\]
We deduce from \eqref{sesSelmer} that 
\begin{equation}
\label{eq21}
\corank_{\Z_p} \Sel_{p^{\infty}}(E/F_n)\geq \rank_{\Z} (E(F_n)),
\end{equation}
with equality if $\Sha(E/F_n)[p^{\infty}]$ is finite.
It follows from the structure theory of $\Lambda$-modules that $\lambda_p(E/F_n)\geq r_n$.
It suffices to show that $r_n\geq \rank_{\Z} (E(F_n))$.
This is indeed the case, since Mazur's Control Theorem asserts that there is a natural map 
\[
\Sel_{p^{\infty}}(E/F_n)\rightarrow \Sel_{p^\infty}(E/F_{\cyc})^{\Gamma_n}
\]
with finite kernel.
From \eqref{eq21}, we see that $r_n\geq \rank_{\Z} (E(F_n))$ and the result follows.
\end{proof}

Let $L/\Q$ be a degree-$p$ Galois extension.
The following theorem relates  the $\lambda$-invariants of $\Selp$ and $\Sel_{p^\infty}(E/L_{\cyc})$.
\begin{Th}
\label{thm: Kida formula}
Let $p\geq 5$ be a fixed prime.
Let $L/\Q$ be a Galois extension of degree a power of $p$ disjoint from the cyclotomic $\Z_p$-extension of $\Q$.
Let $E_{/\Q}$ be a fixed elliptic curve with good ordinary reduction at $p$ and suppose that $\Selp$ is a cofinitely generated $\Z_p$-module.
Then, $\Sel_{p^\infty}(E/L_{\cyc})$ is also a cofinitely generated $\Z_p$-module.
Moreover, their respective $\lambda$-invariants are related by the following formula
\[
\lambda_p(E/L) = p\lambda_p(E/\Q) + \sum_{w \in P_1} \left(e_{L_{\cyc}/\Q_{\cyc}}(w) -1 \right) + 2\sum_{w\in P_2} \left(e_{L_{\cyc}/\Q_{\cyc}}(w) -1 \right)
\]
where $e_{L_{\cyc}/\Q_{\cyc}}(w)$ is the ramification index, and $P_1, P_2$ are sets of primes in $L_{\cyc}$ such that
\begin{align*}
P_1 & = \left\{ w : \ w\nmid p, \ E \textrm{ has split multiplicative reduction at }w \right\},\\
P_2 & = \left\{ w : \ w\nmid p, \ E \textrm{ has good reduction at }w, \ E(L_{\cyc,w}) \textrm{ has a point of order }p \right\}.
\end{align*}
\end{Th}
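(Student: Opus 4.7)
The plan is to adapt Y.~Kida's original argument for class groups to the setting of Selmer groups, following the framework of Hachimori--Matsuno. By induction on the $p$-power $[L:\Q]$, I would filter $L/\Q$ through a tower of cyclic degree-$p$ subextensions and reduce to the base case where $\Gal(L/\Q) \cong \Z/p\Z$; since ramification indices multiply along a tower, the asserted formula is preserved under the induction. In the base case, set $H := \Gal(L/\Q) = \Gal(L_{\cyc}/\Q_{\cyc})$ (using the disjointness hypothesis to identify these groups), and write $X_K := \Sel_{p^\infty}(E/K_{\cyc})^{\vee}$ for $K \in \{\Q, L\}$.

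The first substantive step is to verify that $X_L$ is finitely generated over $\Z_p$, i.e.\ that $\mu(E/L) = 0$. Using the Hochschild--Serre spectral sequence for the tower $L_{\cyc}/\Q_{\cyc}$ together with the defining sequences of the two Selmer groups, one compares the restriction map $\Sel_{p^\infty}(E/\Q_{\cyc}) \to \Sel_{p^\infty}(E/L_{\cyc})^{H}$ with its local analogues at each $v \in S$. Because $H$ is finite cyclic of order $p$ and $E[p^{\infty}]^{G_{L_{\cyc}}}$ is cofinitely generated over $\Z_p$, the kernel and cokernel of both the global and local restriction maps have finite $\Z_p$-corank. A snake-lemma chase then shows $\Sel_{p^\infty}(E/L_{\cyc})^H$ is cofinitely generated over $\Z_p$. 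Combining this with Nakayama's lemma for the finitely generated $\Z_p[H]$-module $X_L$, and using that $X_L$ is $\Lambda$-cotorsion by Kato's theorem applied to the abelian extension $L_{\cyc}/\Q$, forces $\mu(E/L) = 0$.

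Granted $\mu = 0$ on both sides, $\lambda(E/K)$ equals $\rank_{\Z_p} X_K$. The core computation exploits that $H$ is cyclic of prime order, so every $\Z_p[H]$-module finitely generated over $\Z_p$ has trivial Herbrand quotient. Applying an Euler--Poincar{\'e} calculation to the Hochschild--Serre five-term sequences relating the global and local cohomologies over $\Q_{\cyc}$ and over $L_{\cyc}$, one arrives at an identity of the form
\[
\rank_{\Z_p} X_L = p \cdot \rank_{\Z_p} X_{\Q} + \sum_{v} \delta_v,
\]
where $v$ runs over primes of $\Q_{\cyc}$ ramified in $L_{\cyc}/\Q_{\cyc}$ and each $\delta_v$ is a local defect computed from the mismatch between $J_v(E/\Q_{\cyc})$ and $J_v(E/L_{\cyc})^{H}$.

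The main obstacle, and where I expect most of the work, is the explicit case analysis of each $\delta_v$. For $v \nmid p$ at which $E$ has split multiplicative reduction, Tate uniformization shows that $E(\overline{\Q}_{\cyc,v})[p^{\infty}]$ is Galois-isomorphic to $\mu_{p^{\infty}}$ up to an unramified twist, which yields the contribution $(e_w - 1)$ per prime $w \mid v$ as in the statement. For $v \nmid p$ of good reduction with $E(L_{\cyc,w})$ containing a point of order $p$, both Jordan--H{\"o}lder factors of $E[p]$ enter the local cohomology and the contribution doubles to $2(e_w - 1)$. The remaining primes, namely those above $p$ (handled by the good ordinary hypothesis together with Greenberg's description of the local condition) and the good reduction primes without a $p$-torsion point over $L_{\cyc,w}$, are shown to contribute zero by a direct cohomological vanishing. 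Summing $\delta_v$ over the $H$-orbits on the ramified primes of $L_{\cyc}/\Q_{\cyc}$ then yields precisely the Kida-type formula of the theorem.
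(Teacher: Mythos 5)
You are reconstructing the proof of a theorem that the paper itself does not prove: its ``proof'' is the single citation \cite[Theorem 3.1]{HM99}. Your outline does follow the Hachimori--Matsuno strategy (which in turn follows Iwasawa's proof of Kida's formula): reduce to a cyclic degree-$p$ step, transfer $\mu=0$ using the finiteness of the kernels and cokernels of restriction together with a Nakayama-type argument for the $p$-group action, and then a local case analysis --- Tate uniformization at split multiplicative primes away from $p$, a doubled contribution at good primes where $E(L_{\cyc,w})$ has a point of order $p$, no contribution at $p$ because of ordinarity, and none at unramified primes. So the architecture is the right one, and it is the one behind the result the paper quotes.

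However, the pivotal algebraic step is misstated in a way that, taken literally, makes the whole computation collapse. It is false that every $\Z_p[H]$-module finitely generated over $\Z_p$ has trivial Herbrand quotient when $H$ is cyclic of order $p$: for $M=\Z_p$ with trivial action one has $\#H^1(H,M)=1$ but $\#H^2(H,M)=p$. The actual mechanism is the opposite of what you assert. All the kernels and cokernels of the global and local restriction maps --- in particular of $J_v(E/\Q_{\cyc})\rightarrow J_v(E/L_{\cyc})^H$ --- are \emph{finite}, so they contribute no corank defect; one gets $\corank_{\Z_p}\Sel_{p^\infty}(E/L_{\cyc})^H=\lambda(E/\Q)$, and the entire content of the formula lies in comparing $\lambda(E/L)=\rank_{\Z_p}X_L$ with $p\cdot\rank_{\Z_p}\left(X_L\right)_H$. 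That difference is $(p-1)$ times the $p$-adic valuation of the \emph{nontrivial} Herbrand quotient of $\Sel_{p^\infty}(E/L_{\cyc})$ as an $H$-module, which is then evaluated, using the surjectivity of the global-to-local map and local duality, in terms of the finite groups $H^i(H_w,E(L_{\cyc,w}))[p^\infty]$ at ramified primes; their orders are exactly what produce $(e_w-1)$ and $2(e_w-1)$. If your triviality claim were true, the same bookkeeping would yield $\lambda(E/L)=p\lambda(E/\Q)$ with no correction terms at all, contradicting the very formula you are deriving; and your proposed ``local defects $\delta_v$'' cannot be recovered as corank mismatches of the maps $J_v(E/\Q_{\cyc})\rightarrow J_v(E/L_{\cyc})^H$, precisely because those mismatches are finite. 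Two smaller points: invoking Kato's theorem for $\Lambda$-cotorsionness over $L$ requires $L/\Q$ abelian, which a Galois extension of $p$-power degree need not be (cofinite generation over $\Z_p$, once established, already gives cotorsionness, so the appeal is unnecessary); and calling $X_L$ a ``finitely generated $\Z_p[H]$-module'' before the descent is carried out presupposes the finite generation over $\Z_p$ you are trying to prove --- the standard route is to deduce finiteness of $\Sel_{p^\infty}(E/L_{\cyc})[p]$ from finiteness of its $H$-invariants.
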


\begin{proof}
{\cite[Theorem 3.1]{HM99}}.
\end{proof}

\begin{Rem*}
We remind the reader that a cofinitely generated cotorsion $\Lambda$-module $M$ is a cofinitely generated $\Z_p$-module precisely when the associated $\mu$-invariant is 0.
\end{Rem*}

We record the following result which was first proven by Greenberg.
\begin{Prop}
\label{prop: greenberg}
Let $E_{/\Q}$ be a rank 0 elliptic curve and assume that the Shafarevich--Tate group is finite.
Then, for density one good (ordinary) primes, $\mu_p(E/\Q) = \lambda_p(E/\Q)= 0$.
\end{Prop}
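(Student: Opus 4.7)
The plan is to show that for all good ordinary primes $p$ outside a density-zero set, the Pontryagin dual $X:=\Selp^{\vee}$ is the zero $\Lambda$-module; this is equivalent to $\mu(E/\Q)=\lambda(E/\Q)=0$.

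I would first verify $\Sel_{p^\infty}(E/\Q)=0$ for almost all $p$. The rank-zero hypothesis makes $E(\Q)$ finite, so $E(\Q)\otimes\Q_p/\Z_p=0$ for every $p$; the sequence \eqref{sesSelmer} then identifies $\Sel_{p^\infty}(E/\Q)\cong\Sha(E/\Q)[p^\infty]$, which vanishes for all $p$ outside the finitely many primes dividing $|\Sha(E/\Q)|$. Next I would apply Mazur's Control Theorem: the restriction map $\Sel_{p^\infty}(E/\Q)\to\Selp^{\Gamma}$ has kernel and cokernel bounded by local cohomology factors at the primes $v\in S$, which are controlled by the Tamagawa numbers $c_v$ at $v\neq p$ and by $|\widetilde{E}(\F_p)[p^\infty]|$ at $v=p$. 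Excluding the finitely many primes that divide some $c_v$ or $|E(\Q)_{\tors}|$, together with the density-zero set of anomalous primes (those with $p\mid|\widetilde{E}(\F_p)|$), every one of these local factors is trivial and the control map is an isomorphism. Combined with the previous step, this gives $\Selp^{\Gamma}=0$, and in particular $\Selp^{\Gamma}[p]=0$.

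The crucial final step promotes $\Selp^{\Gamma}[p]=0$ to $X=0$ via Nakayama's lemma for the local ring $\Lambda$ with maximal ideal $\mathfrak{m}=(p,T)$. By Kato's theorem $X$ is finitely generated over $\Lambda$, so it suffices to show $X/\mathfrak{m}X=0$. Pontryagin duality identifies $X/\mathfrak{m}X$ with the $\F_p$-dual of $\Selp[\mathfrak{m}]=\Selp[p]\cap\Selp^{\Gamma}=\Selp^{\Gamma}[p]$, which we have just shown to be trivial. Nakayama's lemma then forces $X=0$, whence $\mu(E/\Q)=\lambda(E/\Q)=0$.

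The main conceptual point, and the step I expect to be the most delicate if one tries to push this further, is the Nakayama move at the end: it replaces what would otherwise have to be a separate analysis of the $\mu$-invariant (where, unconditionally, one typically needs irreducibility of $E[p]$ together with a Greenberg--Vatsal or Matsuno-style argument, or an Iwasawa-main-conjecture input) and of the $\lambda$-invariant (where one would otherwise need a $\Gamma$-Euler-characteristic computation and a distinguished-polynomial argument). The single input $\Selp^{\Gamma}[p]=0$, available from trivial $\Q$-Selmer plus vanishing local terms, already encodes both.
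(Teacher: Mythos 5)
Your argument is correct and is essentially the proof that the paper invokes by citation: \cite[Theorem 5.1]{Gre99} and \cite[Theorem 3.7, Corollary 3.6]{KR21} prove exactly this by showing $\Sel_{p^\infty}(E/\Q)=0$ away from finitely many primes, checking that the local control-theorem (equivalently, Euler-characteristic) terms --- Tamagawa numbers, $\#\widetilde{E}(\F_p)[p]$, torsion --- are trivial for density one good ordinary primes, and then promoting $\Selp^{\Gamma}=0$ to $\Selp=0$ by Nakayama over $\Lambda$, which gives $\mu=\lambda=0$. The only inputs you assert without proof, namely the finiteness of $\Sha(E/\Q)$ (a standing assumption of the paper) and the density-zero nature of anomalous primes, are precisely the inputs used in the cited sources, so there is no gap.
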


\begin{proof}
See \cite[Theorem 5.1]{Gre99} or \cite[Theorem 3.7]{KR21}.
\end{proof}

\begin{Rem*}
In \cite[Corollary 3.6]{KR21}, it is shown that $\mu_p(E/\Q) = \lambda_p(E/\Q)= 0$ is equivalent to the vanishing of $\Sel_{p^\infty}(E/\Q_{\cyc})$.
This happens when $\Sel_{p^\infty}(E/\Q)=0$, the Tamagawa numbers at the primes of bad reduction of $E$ are not divisible by $p$, and $p$ is not an anomalous prime in the sense of \cite{Maz72}. 
\end{Rem*}

\subsection{Results, conjectures, and heuristics on ranks of elliptic curves}
\label{subsection: conjectures}
There are several important conjectures in the theory of elliptic curves.
The first one of interest is the \emph{rank distribution conjecture} which claims that over any number field, half of all elliptic curves (when ordered by height) have Mordell--Weil rank zero and the remaining half have Mordell--Weil rank one.
Finally, higher Mordell--Weil ranks constitute zero percent of all elliptic curves, even though there may exist infinitely many such elliptic curves.
Therefore, a suitably-defined \emph{average rank} would be $1/2$.
The best results in this direction are by M.~Bhargava and A.~Shankar (see \cite{BS15_quartic, BS15_cubic}).
They show that the average rank of elliptic curves over $\Q$ is strictly less than one, and that both rank zero and rank one cases comprise non-zero densities across all elliptic curves over $\Q$ (see \cite{BS13}).

Given an elliptic curve $E$ defined over $\Q$ and base-changed to $F$, we have the associated Hasse--Weil $L$ function, $L_E(s,F)$.
Let $F/\Q$ be an abelian extension with Galois group $G$ and conductor $f$.
Let $\hat{G}$ be the group of Dirichlet characters, $\chi: (\Z/f\Z)^\times \rightarrow \C^\times$.
We further know that
\[
L_E(s, F) = \prod_{\chi\in \hat{G}}L_E(s,\chi),
\]
where the terms appearing on the right hand side are the $L$-functions of $E_{/\Q}$ twisted by the character $\chi$.
\begin{Conj*}[Birch and Swinnerton-Dyer]
The Hasse--Weil $L$-function has analytic continuation to the whole complex plane, and
\[
\ord_{s=1} L_E(s,F) = \rank_{\Z}(E(F)).
\]
\end{Conj*}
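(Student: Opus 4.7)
The displayed statement is the Birch and Swinnerton-Dyer Conjecture for $E$ base-changed to an abelian extension $F/\Q$, which is an open problem (indeed a Clay Millennium Prize Problem). I cannot sketch a proof, since none is known; the best I can offer is a summary of the partial progress and the obstruction to completing it. The factorization $L_E(s,F)=\prod_{\chi\in\hat{G}}L_E(s,\chi)$ displayed just above the conjecture reduces the analytic continuation and the order-of-vanishing statement to the corresponding assertions for $E_{/\Q}$ twisted by each Dirichlet character of $\Gal(F/\Q)$. Analytic continuation of $L_E(s,\chi)$ is known, since $E_{/\Q}$ is modular (Wiles, Taylor--Wiles, Breuil--Conrad--Diamond--Taylor) and the twist of a modular form by a Dirichlet character is again modular, so only the rank equality is in question.

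For each individual twist, the strongest known results come from the Gross--Zagier--Kolyvagin machinery. When $\ord_{s=1}L_E(s,\chi)\leq 1$ one can construct Heegner points over a suitable CM extension, use the Gross--Zagier formula to relate their heights to derivatives of the $L$-function, and then apply Kolyvagin's Euler system to bound the $\chi$-part of the Selmer group and force equality with the analytic order of vanishing. The plan would therefore be to establish analytic non-vanishing of $L_E(1,\chi)$ or of $L_E'(1,\chi)$ for each $\chi\in\hat{G}$, and then to feed each twist into the Gross--Zagier--Kolyvagin framework separately, finally reassembling the contributions into $\rank_{\Z}(E(F))$ by character-theoretic decomposition of $E(F)\otimes\C$. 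For higher-order vanishing one would instead hope to invoke the Iwasawa Main Conjecture (Kato, Skinner--Urban, and their refinements) together with Mazur's Control Theorem and Theorem \ref{thm: Kida formula}, to compare $\corank_{\Z_p}\Sel_{p^\infty}(E/F)$ with the order of vanishing of a $p$-adic $L$-function at each character and then deduce the equality of ranks from the interpolation property.

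The main obstacle is that none of these methods addresses analytic rank $\geq 2$ unconditionally: Kolyvagin's Euler system degenerates, the Iwasawa Main Conjecture yields only one-sided divisibilities in the generality needed, and matching archimedean periods with N\'eron--Tate regulators and local Tamagawa factors (the finer part of the BSD formula, which would be required to turn $p$-adic input into a statement about $\ord_{s=1}$ of the complex $L$-function for arbitrary $\chi$) requires transcendence information that is not presently available. In the context of this paper the conjecture is stated as motivation for the growth questions studied in later sections, not as a result to be established, and I see no route by which the techniques introduced earlier in the excerpt could give more than partial and conditional progress towards it.
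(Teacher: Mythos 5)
You are right to refuse: the statement is the Birch and Swinnerton-Dyer Conjecture, which the paper states (as a \emph{Conjecture}, without proof) purely as motivation for the discussion of twisted $L$-functions and the David--Fearnley--Kisilevsky conjecture, so there is no proof in the paper to compare against. Your assessment that it is an open problem, and your summary of the known partial results, is accurate and consistent with how the paper uses it.
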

It follows from the BSD Conjecture that the vanishing of the twisted $L$-functions $L_E(s,\chi)$ at $s=1$ is equivalent to the existence of rational points of infinite order on $E(F)$.

The following conjecture of David--Fearnley--Kisilevsky predicts that given an elliptic curve over $\Q$, the rank ``rarely'' jumps in $\Z/p\Z$-extensions with $p\neq 2$.
More precisely,
\begin{Conj*}[{\cite[Conjecture 1.2]{DFK07}}]
Let $p$ be an odd prime and $E_{/\Q}$ be an elliptic curve.
Define
\[
N_{E,p}(X) := \# \{\chi \textrm{ of order }p \mid \cond(\chi)\leq X \textrm{ and } L_{E}(1,\chi)=0 \}.
\]
\begin{enumerate}[\textup{(}1\textup{)}]
\item If $p=3$, then as $X\rightarrow\infty$
\[
\log N_{E,p}(X) \sim \frac{1}{2}\log X.
\]
\item If $p=5$, then as $X\rightarrow\infty$, the set $N_{E,p}(X)$ is unbounded but $N_{E,p}(X)\ll X^\epsilon$ for any $\epsilon>0$.
\item If $p\geq 7$, then $N_{E,p}(X)$ is bounded.
\end{enumerate}
\end{Conj*}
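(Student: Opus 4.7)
The plan is to study $N_{E,p}(X)$ through the distribution of the algebraic parts of the twisted central values $L_E(1,\chi)$ as $\chi$ ranges over Dirichlet characters of exact order $p$. Such characters have conductor supported on $p$ and on primes $q\equiv 1\pmod p$, and on each $\Gal(\Q(\zeta_p)/\Q)$-orbit the values $L_E(1,\chi),L_E(1,\chi^2),\ldots,L_E(1,\chi^{p-1})$ are Galois conjugate; hence the entire orbit of size $p-1$ either contributes to $N_{E,p}(X)$ or none of it does. I would begin by enumerating such orbits with conductor $\leq X$ via the prime number theorem in arithmetic progressions.

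The central arithmetic input is the classical theorem of Birch--Stevens / Deligne / Shimura that, with a choice of real/imaginary period $\Omega_E^\pm$ depending on $\chi(-1)$,
\[
L^{\mathrm{alg}}_E(1,\chi) \;:=\; \frac{L_E(1,\chi)\,\tau(\bar\chi)}{\Omega_E^{\mathrm{sgn}(\chi)}} \;\in\; \Z[\zeta_p].
\]
Thus $L_E(1,\chi)=0$ if and only if this algebraic integer vanishes. Minkowski's bound supplies a critical gap: any nonzero element of $\Z[\zeta_p]$ has at least one complex embedding of absolute value $\geq c_p > 0$, so once $|L_E(1,\chi)|$ (appropriately renormalized by $\tau(\bar\chi)$ of size $\sqrt{q}$ and by $\Omega_E^{\pm}$) falls below the corresponding threshold, exact vanishing is forced.

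I would then combine this discreteness with the Conrey--Keating--Snaith random matrix model for the family $\{L_E(s,\chi):\ord(\chi)=p\}$, which, being of unitary symmetry type, predicts a specific density for $|L_E(1,\chi)|^2$ near zero. Calibrating this density against the Minkowski gap yields a heuristic vanishing probability
\[
\mathrm{Prob}\bigl(L_E(1,\chi)=0\bigr) \;\asymp\; q^{-e(p)}
\]
for an exponent $e(p)$ read off from the CKS density together with $[\Q(\zeta_p):\Q]=p-1$. Summing $\sum_{q\leq X,\ q\equiv 1\pmod p} q^{-e(p)}$ via Dirichlet's theorem matches the three predicted regimes: $e(3)<1$ gives power-law growth with $\log N_{E,3}(X)\sim\tfrac12\log X$; $e(5)$ sits at the critical value producing unbounded but $O(X^\varepsilon)$ growth; and $e(p)>1$ for $p\geq 7$ gives a convergent tail, so $N_{E,p}(X)=O(1)$.

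The main obstacle, by a wide margin, is that the central input is entirely conjectural. Even unconditional upper bounds of the form $N_{E,p}(X)\ll X^{1-\delta}$ would require mollified second-moment techniques adapted to order-$p$ twist families, generalizing Iwaniec--Sarnak and Soundararajan's non-vanishing results for quadratic twists; these seem feasible in principle but no precise version appears in the literature for $p\geq 3$. A rigorous matching lower bound in the $p=3$ case would demand equidistribution of modular symbols sharp enough to detect atypical vanishing at the predicted rate, and the boundedness assertion for $p\geq 7$ is strictly stronger than any non-vanishing theorem currently accessible. Accordingly, this conjecture is best understood as a heuristic prediction, with realistic progress likely limited to conditional or partial upper bounds in the near term.
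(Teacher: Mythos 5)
This statement is the David--Fearnley--Kisilevsky conjecture, which the paper simply quotes from \cite{DFK07} without proof (it is an open conjecture, used later only as a hypothesis), so there is no proof in the paper to compare your attempt against. Your sketch --- discretizing $L_E(1,\chi)$ through its algebraic part in $\Z[\zeta_p]$ and calibrating the resulting gap against random-matrix predictions for the family of order-$p$ twists, then summing over conductors $q\equiv 1\pmod p$ --- is essentially the original heuristic justification of David--Fearnley--Kisilevsky (and agrees with the Mazur--Rubin modular-symbol heuristics mentioned in Section 2 of the paper), and you are right to present it as a heuristic prediction rather than a proof, since no proof of any of the three assertions is currently known.
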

\noindent Under BSD, $\# N_{E,p}(X)$ can be rewritten as
\[
(p-1)\#\{ F/\Q \textrm{ is cyclic of degree }p \mid \cond(F)\leq X \textrm{ and } \rank_{\Z}(E(F))>\rank_{\Z}(E(\Q))  \}.
\]
In \cite[Theorem 1]{Dok07}, T.~Dokchitser showed that given an elliptic curve $E_{/\Q}$, there are infinitely many $\Z/3\Z$ extensions where the rank jumps.
More recently, B.~Mazur and K.~Rubin have shown (see \cite[Theorem 1.2]{MR18}) that given an elliptic curve $E$, there is a positive density set of primes (call it $\mathcal{S}$) such that for each $p\in \mathcal{S}$ there are infinitely many cyclic degree-$p$ extensions over $\Q$ with $\rank_{\Z}(E(L)) = \rank_{\Z}(E(\Q))$.
However, the result is unable to provide a positive proportion.
Mazur--Rubin revisited this conjecture in a recent preprint (see \cite{MR19_arithmetic_conjectures}) and their heuristics, based on the distribution of modular symbols, predicts the same statement as the conjecture of David--Fearnley--Kisilevsky.

\section{\texorpdfstring{Rank Jump in Degree-$p$ Galois Extensions}{}}
\label{section: no rank jump}
Let $E_{/\Q}$ be a rank 0 elliptic curve and $p$ be an odd prime number.
In this section, we are interested in studying two questions for a given pair $(E,p)$.
First, we analyze in how many (or what proportion of) cyclic degree-$p$ Galois extensions over $\Q$ does the rank of $E$ \emph{not} jump.
This question is addressed in Theorem~\ref{infinitely many p extensions with no lambda jump}: for non-CM elliptic curves, the result is unconditional; whereas, for the CM-case we prove the same result under an additional independence hypothesis \ref{assmpn: enemy primes for CM}.
Next, we study the \emph{dual problem}, i.e., for what proportion of elliptic curves does the rank \emph{not} jump in at least one degree-$p$ Galois extension over $\Q$.
This question is discussed in Section~\ref{section: dual problem}.

Even though such questions have been studied in the past, our approach involving Iwasawa theory is new.
Let $E_{/\Q}$ be an elliptic curve with \emph{good ordinary reduction} at a fixed odd prime $p$.
In Lemma \ref{lambda geq rank} we showed that over any number field, $\lambda_p(E/F)\geq \rank_{\Z}(E(F))$.
It is well-known (see for example, \cite[Corollary 3.6]{KR21}) that if $E_{/\Q}$ is a rank 0 elliptic curve with good (ordinary) reduction at an odd prime $p$ then $\mu_p(E/\Q) = \lambda_p(E/\Q) = 0$ if and only if $\Selp =0$.
We remind the reader that in Proposition \ref{prop: greenberg} we showed that the triviality of $\Selp$ is observed \emph{often}.
The same statement holds for any number field, under the additional hypothesis that the Shafarevich--Tate group is finite in every layer of its cyclotomic $\Z_p$-extension.
Our key idea is to start with a rank 0 elliptic curve $E_{/\Q}$ for which $\mu_p(E/\Q) = \lambda_p(E/\Q)=0$ and count how often is $\mu_p(E/L) = \lambda_p(E/L) =0$, where $L/\Q$ is a cyclic degree-$p$ extension.

For a number field $F$, it is possible that $\lambda_p(E/F)> \rank_{\Z}(E(F))$.
So, our method fails to measure \emph{all} instances when the rank of the elliptic curve does not change; i.e., our results only provide a lower bound.
However, we succeed in answering a stronger question, i.e., how often is the $p$-primary Selmer group $\Sel_{p^\infty}(E/F_{\cyc})$ trivial upon base-change.

\subsection{}
Let $E_{/\Q}$ be a fixed rank 0 elliptic curve of conductor $N$ with good ordinary reduction at a fixed prime $p\geq 5$ such that $\mu_p(E/\Q) = \lambda_p(E/\Q)=0$.
Recall that the \emph{rank distribution conjecture} predicts that half of the elliptic curves (ordered by height or conductor) have rank 0.
Moreover, Proposition \ref{prop: greenberg} asserts that there are density one good ordinary primes satisfying the condition of vanishing Iwasawa invariants.
Lastly, when an elliptic curve does \emph{not} have complex multiplication (CM), density one of the primes are good ordinary; in the CM case, the good ordinary and the good supersingular primes each have density 1/2.

Throughout this section, $L/\Q$ denotes a $\Z/p\Z$-extension disjoint from the cyclotomic $\Z_p$-extension.
Let $q$ be a prime number distinct from $p$ such that $E$ has good reduction at $q$, i.e., $\gcd(q,N)=1$.
Let $w|q$ be a prime in $L$, $L_w$ be the completion at $w$, and $\kappa$ be the residue field of characteristic $q$.
We know that there is an exact sequence of abelian groups (see \cite[Prop VII.2.1]{Sil09})
\[
0 \rightarrow E_1(L_w) \rightarrow E_0(L_w) \rightarrow  \widetilde{E}_{\ns}(\kappa) \rightarrow 0,
\]
where $\widetilde{E}_{\ns}(\kappa)$ is the set of non-singular points of the reduced elliptic curve, $E_0(L_w)$ is the set of points with non-singular reduction, and $E_1(L_w)$ is the kernel of reduction map.
Since $p\neq q$, we know that $E_1(L_w)[p]$ is trivial (see \cite[VII.3.1]{Sil09}).
Because $q$ is assumed to be a prime of good reduction $E_0(L_w) = E(L_w)$.
Hence,
\[
E(L_w)[p] \simeq \widetilde{E}(\kappa)[p].
\]
Since $L/\Q$ is a $\Z/p\Z$-extension, the residue field is either $\mathbb{F}_{q^p}$ or $\mathbb{F}_{q}$ depending on whether  $q$ is inert in the extension or not.

As explained above, (under standard hypothesis) we know that $\lambda_p(E/L)\geq \rank_{\Z}(E(L))$.
Since we have assumed that $\mu_p(E/\Q)=0$, it follows from Theorem \ref{thm: Kida formula} that $\mu_p(E/L)=0$.
To show that $\lambda_p(E/L)=0$, it suffices to show that
\[
\sum_{w\in P_1} \left( e_{L_{\cyc}/\Q_{\cyc}}(w) -1\right) = \sum_{w\in P_2} \left( e_{L_{\cyc}/\Q_{\cyc}}(w) -1\right)=0.
\]
Recall that all primes in the cyclotomic $\Z_p$-extension are finitely decomposed and the only primes that ramify are those above $p$.
Moreover, $L\cap \Q_{\cyc} =\Q$ and $p\not\in P_1 \cup P_2$ (recall the definition of these sets from Theorem \ref{thm: Kida formula}).
Therefore, $e_{L_{\cyc}/\Q_{\cyc}} = e_{L/\Q}$.
Since $p\geq 5$, the reduction type does not change upon base-change.
In particular, if $q(\neq p)$ is a prime of additive reduction for $E_{/\Q}$, then it has additive reduction over $L_{\cyc}$ (see \cite[p. 498]{ST68} or \cite[p. 587]{HM99}).
Finally, since $L_{\cyc,w}/L_w$ is a pro-$p$ group 
we have that $E(L_{\cyc,w})[p^\infty]=0$ if and only if $E(L_{w})[p^\infty]=0$.
Thus, it \emph{suffices} to show that
\begin{equation}
\label{eqn: last term is trivial}
\sum_{w\in P_1} \left( e_{L/\Q}(w) -1\right) = \sum_{w\in P_2} \left( e_{L/\Q}(w) -1\right)=0,
\end{equation} 
where $P_1, P_2$ are now sets of primes in $L$.
More precisely,
\begin{align*}
P_1 & = \left\{ w\in L : \ w\nmid p, \ E \textrm{ has split multiplicative reduction at }w \right\},\\
P_2 & = \left\{ w\in L : \ w\nmid p, \ E \textrm{ has good reduction at }w, \ E(L_{w}) \textrm{ has a point of order }p \right\}.
\end{align*}
It is often possible that $P_1 = \emptyset$ but the set $P_2$ is never empty.
In fact, it is known that (see for example \cite[\S 2]{Coj04})
\begin{equation}
\label{eqn from cojocaru}
\lim_{X\rightarrow\infty} \frac{\# \{q \leq X \mid q\nmid pN, \ p\mid \#\widetilde{E}(\F_q)\}}{\pi(X)} \approx \frac{1}{p}.
\end{equation}
Here, $\pi(X)$ is the prime counting function.

\begin{Rem*}
Henceforth, the sets $P_1, P_2$ will denote sets of primes in $L$ (rather than $L_{\cyc}$).
By our assumption that $L\cap \Q_{\cyc}=\Q$, we have excluded the case that $p$ is the only ramified prime.
If two or more primes are ramified, then it is possible that $p$ is (wildly) ramified in $L$.
But by definition, $p\not\in P_1\cup P_2$.
Therefore, the ramification of $p$ in $L/\Q$ does not contribute to the $\lambda$-jump.
It suffices to focus on the ramification of primes $q\neq p$.
\end{Rem*}
The above discussion can be summarized in the result below.
\begin{Prop}
\label{prop: no rank jump}
Let $E_{/\Q}$ be a rank 0 elliptic curve with good ordinary reduction at $p\geq 5$ such that $\mu_p(E/\Q)=\lambda_p(E/\Q)=0$.
Let $L/\Q$ be any cyclic degree-$p$ extension disjoint from $\Q_{\cyc}$ such that \eqref{eqn: last term is trivial} holds.
Then $\mu_p(E/L) = \lambda_p(E/L)=0$.
This implies in particular that $\rank_{\Z}( E(L_n))=0$ for all $n\geq 0$.
\end{Prop}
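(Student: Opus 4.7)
The plan is to assemble the statement as a direct application of Kida's formula (Theorem \ref{thm: Kida formula}) together with the reductions already carried out in the discussion preceding the proposition.

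First I would invoke Kida's formula, which applies because $p \geq 5$, the extension $L/\Q$ has degree a power of $p$ and is disjoint from $\Q_{\cyc}$, $E$ has good ordinary reduction at $p$, and $\mu(E/\Q) = 0$ ensures that $\Sel_{p^\infty}(E/\Q_{\cyc})$ is a cofinitely generated $\Z_p$-module. This gives
\[
\lambda(E/L) = p\,\lambda(E/\Q) + \sum_{w \in P_1} \bigl(e_{L_{\cyc}/\Q_{\cyc}}(w) - 1\bigr) + 2\sum_{w \in P_2} \bigl(e_{L_{\cyc}/\Q_{\cyc}}(w) - 1\bigr),
\]
where $P_1, P_2$ are sets of primes in $L_{\cyc}$. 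Since $\lambda(E/\Q) = 0$ by hypothesis, the first term vanishes, and the vanishing of $\mu(E/L)$ is built into Kida's formula as well (a cofinitely generated $\Z_p$-module over $\Q_{\cyc}$ has cofinitely generated base change).

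Next I would translate the Kida sums, which are a priori over primes of $L_{\cyc}$, into sums over primes of $L$, as justified in the paragraph preceding the proposition. Three facts are used: (i) since $L \cap \Q_{\cyc} = \Q$ and $p \notin P_1 \cup P_2$, ramification indices satisfy $e_{L_{\cyc}/\Q_{\cyc}}(w) = e_{L/\Q}(w)$; (ii) for $p \geq 5$ the reduction type is preserved under base change (so split multiplicative, resp.\ good, reduction at a prime of $L$ above $q \neq p$ corresponds to the same reduction type on $L_{\cyc}$); and (iii) a standard Nakayama lemma argument shows $E(L_{\cyc,w})[p^\infty] = 0$ iff $E(L_w)[p^\infty] = 0$, so membership in $P_2$ is detected at the $L$-level. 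Consequently the two contributions in Kida's formula coincide with those in \eqref{eqn: last term is trivial}, both of which are zero by assumption.

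Combining these observations, $\lambda(E/L) = 0$, and together with $\mu(E/L) = 0$ this forces $\Sel_{p^\infty}(E/L_{\cyc}) = 0$. Finally, applying Lemma \ref{lambda geq rank} to $E/L$ yields
\[
0 = \lambda(E/L) \geq \rank_{\Z}(E(L)),
\]
so $\rank_{\Z}(E(L)) = 0$. There is no real obstacle here since the proposition mainly repackages Kida's formula under the stated triviality hypothesis; the only mildly delicate point is verifying that the prime-level conditions in \eqref{eqn: last term is trivial} match the cyclotomic-level conditions appearing in Theorem \ref{thm: Kida formula}, and this is handled by the three facts above.
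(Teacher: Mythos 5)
Your proposal is correct and follows essentially the same route as the paper: the paper gives no separate proof block and instead presents Proposition \ref{prop: no rank jump} as a summary of the discussion immediately preceding it, which is exactly the chain you reproduce — Kida's formula gives $\mu(E/L)=0$ and reduces $\lambda(E/L)=0$ to the vanishing of the two Kida sums, and the three facts you list (equality of ramification indices since $L\cap\Q_{\cyc}=\Q$ and $p\notin P_1\cup P_2$, preservation of reduction type for $p\geq 5$, and the Nakayama argument identifying $E(L_{\cyc,w})[p^\infty]=0$ with $E(L_w)[p^\infty]=0$) are precisely the ones the paper uses to replace the $L_{\cyc}$-level sums with the $L$-level condition \eqref{eqn: last term is trivial}. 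The final passage to $\rank_{\Z}(E(L))=0$ via Lemma \ref{lambda geq rank} is also the paper's.
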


The conditions imposed in Proposition \ref{prop: no rank jump} will be required throughout this section.
Therefore, we make the following definition.
\begin{Defi}
\label{defn: relevant and irrelevant}
Given an elliptic curve $E_{/\Q}$ of rank 0, a prime $p$ is called \emph{irrelevant} if at least one of the following properties hold.
\begin{enumerate}[\textup{(}i\textup{)}]
\item $p$ is a prime of bad reduction.
\item $p$ is a prime of supersingular reduction.
\item At $p$, the $\mu$-invariant associated to the $p$-primary Selmer group is positive.
\item At $p$, the $\lambda$-invariant associated to the $p$-primary Selmer group is positive.
\end{enumerate}
Otherwise, it is called a \emph{relevant prime}.
\end{Defi}

Let $E$ be an elliptic curve and let $p,q$ be two distinct primes.
Given a triple $(E, p, q)$ we aim to understand when \eqref{eqn: last term is trivial} holds.
We begin by recalling the following well-known result.

\begin{Prop}
\label{well know prop from jr08}
Let $p$ be an odd prime.
Let $L/\Q$ be any $\Z/p\Z$-extension disjoint from the cyclotomic $\Z_p$-extension which is ramified at exactly one prime $q\neq p$.
Such an extension exists precisely when $q \equiv 1 \pmod{p}$.
Moreover, $L$ is unique and has conductor $q$.
\end{Prop}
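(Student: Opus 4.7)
The plan is to apply the Kronecker--Weber theorem together with the cyclic structure of $(\Z/q^k\Z)^\times$ to pin down the possible $L$.

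First I would use that, since $L/\Q$ is abelian of prime degree $p$, Kronecker--Weber embeds $L$ in some cyclotomic field $\Q(\mu_n)$, where we may take $n$ to be the conductor of $L$. The hypothesis that $L$ is ramified at exactly one prime $q \neq p$, combined with the standard fact that a prime ramifies in $\Q(\mu_n)/\Q$ iff it divides $n$, forces $n = q^k$ for some $k \geq 1$. In particular $L \subseteq \Q(\mu_{q^k})$.

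Next I would analyse when a surjection $(\Z/q^k\Z)^\times \twoheadrightarrow \Z/p\Z$ can exist. The group $\Gal(\Q(\mu_{q^k})/\Q) \cong (\Z/q^k\Z)^\times$ is cyclic of order $q^{k-1}(q-1)$, and since $p \neq q$ its $p$-primary part coincides with the $p$-primary part of $\Z/(q-1)\Z$. Hence such a quotient exists if and only if $p \mid q-1$, i.e.\ $q \equiv 1 \pmod p$. Cyclicity then forces uniqueness of the $\Z/p\Z$-quotient, and the kernel contains every element of order a power of $q$, so the quotient factors through $(\Z/q\Z)^\times$. Translating back, the extension $L$ is the unique $\Z/p\Z$-subextension of $\Q(\mu_q)/\Q$, so its conductor is exactly $q$.

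Finally, for disjointness from $\Q_{\cyc}$, I would use that $\Q_{\cyc}/\Q$ is ramified only at $p$. Thus $L \cap \Q_{\cyc}$ is a subfield of $L$ that is unramified at $q$, but $L$ itself is ramified at $q$ and $[L:\Q]=p$ is prime, so $L \cap \Q_{\cyc} = \Q$; the disjointness is therefore automatic from the ramification hypothesis.

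There is no real obstacle in this proof; it is a direct application of the Kronecker--Weber theorem and the cyclicity of $(\Z/q^k\Z)^\times$ for odd $q$. The only minor sanity check is that the case $q = 2$ is vacuous (it would require $p \mid 1$), and that the conductor equals $q$ on the nose, which follows because the extension already lives in $\Q(\mu_q)$ and is non-trivial.
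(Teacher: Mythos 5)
Your proof is correct and complete. The paper itself does not give an argument for this proposition but simply cites Jensen--Ray (Proposition 1.1 of \cite{JR08}); the Kronecker--Weber argument you give, reducing to the cyclicity of $(\Z/q^k\Z)^\times$ for odd $q$ and observing that the $p$-part (for $p\neq q$) lives entirely in $(\Z/q\Z)^\times$, is the standard reasoning behind that cited result, so your write-up in effect supplies the omitted proof.
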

\begin{proof}
See for example \cite[Proposition 1.1]{JR08}.
\end{proof}

In fact, it follows from class field theory (see \cite[Lemma 2.5]{MM16}) that the primes that ramify in a $\Z/p\Z$-extension are either $p$ or precisely those of the form $q\equiv 1\pmod{p}$.
By local class field theory, the discriminant of $L/\Q$ (denoted $d(L/\Q)$) is given by (see \cite[Lemma 2.4]{MM16})
\[
d(L/\Q) = \begin{cases}
\prod_{i=1}^r q_i^{p-1} & \textrm{ if }q_i's \textrm{ are ramified.}\\
\prod_{i=1}^r q_i^{p-1}p^{2(p-1)} & \textrm{ if }q_i's \textrm{ and } p \textrm{ are ramified.}
\end{cases}
\]
If $q(\neq p)$ ramifies in a $\Z/p$-extension then the ramification is tame.

For primes of the form $q\equiv 1\pmod{p}$ (i.e., primes that can ramify in $\Z/p\Z$-extensions), we introduce the notion of \emph{friendly} and \emph{enemy primes}.
\begin{Defi}
Let $p$ be a fixed odd prime and $E_{/\Q}$ be a fixed elliptic curve of rank 0 for which $p$ is a \emph{relevant prime} (see Definition \ref{defn: relevant and irrelevant}).
Define \emph{enemy primes} to be those primes which are of the form $q\equiv 1\pmod{p}$ and such that either of the following conditions hold
\begin{enumerate}[\textup{(}i\textup{)}]
\item $q$ is a prime of split multiplicative reduction \emph{or}
\item $q$ is a prime of good reduction and $p \mid \# \widetilde{E}(\mathbb{F}_q)$.
\end{enumerate}
If a prime is of the form $q\equiv 1 \pmod{p}$ with the additional properties that $q$ is a prime of good reduction and $p\nmid \#\widetilde{E}(\mathbb{F}_q)$, then $q$ will be called \emph{friendly}.
\end{Defi}

\begin{Rem*}
\begin{enumerate}[\textup{(}i\textup{)}]
\item A prime $q\equiv 1\pmod p$ which is a prime of bad reduction but not of split multiplicative type is neither an \emph{enemy prime} nor a \emph{friendly prime}.
\item For our purpose, it is enough to work with the residue field $\mathbb{F}_q$ because eventually we want the primes $q$ to \emph{ramify} in the extension $L$.
\end{enumerate}
\end{Rem*}

Primes $w|q$ in $L$ will also be called an \emph{enemy} or a \emph{friendly} prime depending on the behaviour of $q$.
The following lemma will play a crucial role in the subsequent discussion.

\begin{Lemma}
\label{enemy implies jump}
Let $L/\Q$ be a cyclic degree-$p$ extension disjoint from $\Q_{\cyc}$.
Then \eqref{eqn: last term is trivial} holds precisely when no ramified prime is an \emph{enemy prime}.

\end{Lemma}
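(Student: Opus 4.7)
The plan is to reduce the vanishing of each sum in \eqref{eqn: last term is trivial} to a pointwise condition, and then show that this pointwise condition on ramified primes exactly matches the definition of an enemy prime.

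First I would observe that both sums in \eqref{eqn: last term is trivial} are sums of non-negative integers, since $e_{L/\Q}(w)\ge 1$ always. Hence the identity holds if and only if every term vanishes, which is equivalent to saying that no prime $w$ with $e_{L/\Q}(w)>1$ lies in $P_1\cup P_2$. In other words, \eqref{eqn: last term is trivial} holds precisely when no ramified prime of $L$ is in $P_1\cup P_2$.

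Next I would describe the ramified primes of $L/\Q$. By class field theory (see the discussion after Proposition \ref{well know prop from jr08}), the rational primes that ramify in the $\Z/p\Z$-extension $L$ are exactly $p$ together with certain primes $q\equiv 1\pmod p$. Since $p\notin P_1\cup P_2$ by the very definition of $P_1,P_2$ (they exclude $w\mid p$), the only ramified primes we must analyse are those $w\mid q$ with $q\equiv 1\pmod p$, $q\neq p$. Because $L/\Q$ is cyclic of prime degree $p$, such a $q$ is automatically totally ramified: the ramification index divides $[L:\Q]=p$, so if it is larger than one it equals $p$, forcing residue degree $f=1$ and a single prime $w$ above $q$. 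In particular the residue field $\kappa_w$ equals $\F_q$.

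The final step is to identify membership in $P_1$ and $P_2$ with the two enemy conditions. For $w\in P_1$, since $p\ge 5$ the reduction type is preserved under $L/\Q$ (cited in the passage before \eqref{eqn: last term is trivial}), so $E$ has split multiplicative reduction at $w$ iff $E$ has split multiplicative reduction at $q$; this matches enemy condition (i). For $w\in P_2$, $E$ has good reduction at $w$ iff it has good reduction at $q$; and because $q\neq p$, the reduction map gives $E(L_w)[p]\simeq \widetilde E(\kappa_w)[p]=\widetilde E(\F_q)[p]$, where we used the identification $\kappa_w=\F_q$ from the previous paragraph. Hence $w\in P_2$ iff $E$ has good reduction at $q$ and $p\mid\#\widetilde E(\F_q)$, which is exactly enemy condition (ii). Putting these together, a ramified prime $w\mid q$ of $L$ lies in $P_1\cup P_2$ if and only if $q$ is an enemy prime, completing the equivalence.

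There is no genuine obstacle here; the lemma is essentially a bookkeeping statement, and the only subtlety to handle with care is the residue field identification $\kappa_w=\F_q$ for ramified $q\neq p$, which relies crucially on $L/\Q$ having prime degree (so that ramification forces total ramification).
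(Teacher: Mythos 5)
Your proof is correct and follows essentially the same route as the paper's: both reduce the vanishing of the two sums to the statement that no ramified prime lies in $P_1\cup P_2$ and then match this, via the reduction-type preservation and the identification $E(L_w)[p]\simeq \widetilde{E}(\F_q)[p]$ established earlier in the section, with the definition of an enemy prime. You merely spell out details the paper leaves implicit (total ramification in a prime-degree extension forcing residue field $\F_q$, and the class-field-theoretic description of ramified primes), which is fine.
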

\begin{proof}
Recall that $P_1$ consists of primes $w\nmid p$ such that $E$ \emph{has} split multiplicative reduction at $w$.
Observe that
\[
\sum_{w\in P_1}\left( e_{L/\Q}(w) -1 \right) \neq 0
\]
if and only if there exists a ramified prime in $L/\Q$ of split multiplicative type.
The assertion is immediate from the definition of an \emph{enemy prime}.

Since $L$ is disjoint from $\Q_{\cyc}$, we know that if $p$ is ramified in $L/\Q$, there must be at least one other prime which is also ramified.
Now, 
\[
\sum_{w\in P_2}\left( e_{L/\Q}(w) -1 \right)\neq 0
\]
precisely when there exists a $q(\neq p)$ satisfying \emph{all} of the following conditions:
\begin{enumerate}[\textup{(}i\textup{)}]
\item $q$ is a prime of good reduction for $E$,
\item $q$ is ramified in the extension $L/\Q$, \emph{and}
\item $w$ is a prime above $q$ with $E(L_w)[p]\neq 0$,
\end{enumerate} 
The conclusion of the lemma is now straightforward.
\end{proof}

\subsubsection{} For a given pair $(E,p)$, we denote the set of \emph{enemy primes} by $\cE_{(E,p)}$ and the set of \emph{friendly primes} by $\cF_{(E,p)}$.
Write $\mathcal{N}_{(E,p)}$ for the set of primes of the form $q\equiv 1\pmod{p}$ where $E$ has bad reduction not of split multiplicative type.
The three sets are disjoint.
We further subdivide the first two of these sets into disjoint sets, namely
\begin{align*}
\cF_{(E,p)} &= \cF_{(E,p)}^{\ord} \cup \cF_{(E,p)}^{\super} \textrm{ and }\\
\cE_{(E,p)} &= \cE_{(E,p)}^{\spl} \cup \cE_{(E,p)}^{\ord} \cup \cE_{(E,p)}^{\super}.
\end{align*}
Here, $\cF_{(E,p)}^{\ord}$ (resp. $\cF_{(E,p)}^{\super}$) is the set of primes of the form $q\equiv 1 \pmod{p}$ such that $q$ is a prime of good \emph{ordinary} (resp. \emph{supersingular}) reduction and $p\nmid \#\widetilde{E}(\F_q)$.
The set $\cE_{(E,p)}^{\spl}$ consists of all the primes $q \equiv 1\mod{p}$ of split multiplicative reduction.
Finally, $\cE_{(E,p)}^{\ord}$ (resp. $\cE_{(E,p)}^{\super}$) is the set of primes of the form $q\equiv 1 \pmod{p}$ such that $q$ is a prime of good \emph{ordinary} (resp. \emph{supersingular}) reduction and $p| \#\widetilde{E}(\F_q)$.

\begin{Lemma}\label{no supersingular primes lemma}
Let $E_{/\Q}$ be an elliptic curve and $p\geq 5$ be a \emph{relevant prime}.
Then, $\cE_{(E,p)}^{\super} = \emptyset$.
In particular, $\cE_{(E,p)} = \cE_{(E,p)}^{\spl} \cup \cE_{(E,p)}^{\ord}$.
\end{Lemma}

\begin{proof}
When $q \geq 7$ is a prime of supersingular reduction then it follows from the Hasse bound that $a_q =0$.
Therefore, 
\[
\# \widetilde{E}(\F_q) =  q+ 1 - a_ q = q+1.
\]
Note that we require that $q \equiv 1\pmod{p}$.
Thus, $p \nmid \# \widetilde{E}(\F_q)$. 
\end{proof}

For any subset $\mathcal{S}'$ of the set of primes, let $\mathfrak{d}(\mathcal{S}')$ denote the Dirichlet density of $\mathcal{S}'$.
With notation as above, we have that
\begin{equation}
\label{eqn: add enemy and friendly}
\mathfrak{d}\left(\cE_{(E,p)}\right) + \mathfrak{d}\left(\cF_{(E,p)}\right) + \mathfrak{d}\left(\mathcal{N}_{(E,p)}\right)= \frac{1}{\varphi(p)} = \frac{1}{p-1}.
\end{equation}
The first equality follows from Dirichlet's theorem on primes in arithmetic progressions, which asserts that the proportion of primes that are congruent to $1$ modulo $p$ is $1/\varphi(p)$.
The set $\mathcal{N}_{(E,p)}$ is finite, hence $\mathfrak{d}\left(\mathcal{N}_{(E,p)}\right) = 0$.
We will henceforth disregard the primes $q$ which are of non-split multiplicative and additive reduction type.
For the same reason, we may also disregard (for the purpose of proportion) the primes of split multiplicative reduction.
As $X\rightarrow \infty$, the contribution to $\mathcal{E}_{(E,p)}$ is primarily from primes $q$ such that
\begin{enumerate}[\textup{(}a\textup{)}]
\item $q$ is a prime of good ordinary reduction, 
\item $q\equiv 1\pmod{p}$, \emph{and}
\item $p\mid\# \widetilde{E}(\F_q)$.
\end{enumerate}
For non-CM elliptic curves, the set of primes of good ordinary reduction has density $1$.
In the case of non-CM elliptic curves with surjective residual Galois representation at $p$, we know how to calculate the proportion of primes satisfying conditions (a)--(c).

\begin{Lemma}
Let $E_{/\Q}$ be a non-CM elliptic curve and $p\geq 5$ be a fixed prime of good ordinary reduction such that the residual Galois representation at $p$ is surjective.
Then,
\[
\mathfrak{d}\left(\cE_{(E,p)}\right)= 
\frac{p}{(p-1)^2(p+1)}.
\]
\end{Lemma}

\begin{proof}
The result is well known and follows from the proof of \cite[Proposition~4.6]{GFP20}. For the convenience of the reader, we shall briefly sketch the details here. A more detailed argument is provided in Section \ref{section 4.2}. Since $p\geq 5$, it follows from the proof of Lemma \ref{no supersingular primes lemma} that if conditions (b) and (c) are satisfied, then (a) is automatically satisfied for any prime $q$ of good reduction. Since there are only finitely many primes $q$ at which $E$ has bad reduction, we may as well assume that $q$ is a prime of good reduction.
\par Let $\Frob_q$ denote the Frobenius at $q$ and set $\mathcal{S}$ to be the set of matrices $A\in \GL_2(\F_p)$ such that $\trace(A)=2$ and $\det(A)=1$. Let $\bar{\rho}:\Gal(\bar{\Q}/\Q)\rightarrow \GL_2(\F_p)$ denote the residual representation at $p$, i.e., the representation on the group of $p$-torsion points $E[p]$. By assumption, $\bar{\rho}$ is surjective.
\par Since $q$ is a prime of good reduction, $\bar{\rho}$ is unramified at $q$ and the characteristic polynomial of $\bar{\rho}(\Frob_q)$ is given by
\[\det\left(\Id-T \bar{\rho}(\Frob_q)\right)=T^2-(q+1-\#\tilde{E}(\F_q)) T+q.\]
Thus, $q$ is satisfies both conditions (b) and (c) above if and only if $\bar{\rho}(\Frob_q)$ is contained in $\mathcal{S}$. According to Lemma \ref{S cardinality}, the cardinality of $\mathcal{S}$ is $p^2$. The cardinality of $\image(\bar{\rho})=\GL_2(\F_p)$ is $(p^2-1)(p^2-p)$. The result follows from the Chebotarev density theorem, according to which the density of $\mathcal{E}_{(E,p)}$ is \[\frac{\#\mathcal{S}}{\# \GL_2(\F_q)}=\frac{p}{(p-1)^2(p+1)}.\]
\end{proof}

We performed calculations using SageMath \cite{sagemath} to get an estimate of the proportion of \emph{enemy primes} in the CM case.
More precisely, fix $5 \leq p\leq 50$.
Fix an elliptic curve $E_{/\Q}$ of rank 0 and conductor less than 100.
Running through all primes $q$ less than 200 million, we computed the proportion of \emph{enemy primes}.
The results are recorded at the end of the article, in Table \ref{table: enemy primes}.
The data suggests that for elliptic curves with CM, the proportion of \emph{enemy primes} is \emph{half} of that in the non-CM case.
We know from Deuring's Criterion that for a CM elliptic curve, the density of the set of good ordinary primes is $1/2$.
It therefore, seems reasonable to assume that the \emph{enemy primes} are equally likely to be primes of good ordinary or good supersingular reduction.
More precisely,

\begin{equation}\tag*{\textup{\textbf{Hyp~CM}}}\label{assmpn: enemy primes for CM}
\begin{minipage}{0.85\textwidth}
Let $E_{/\Q}$ be an elliptic curve with CM and $p$ be a fixed odd prime of good ordinary reduction.
Then, $\mathfrak{d}\left(\cE_{(E,p)}\right)= 
\frac{p}{2(p-1)^2(p+1)}$.
\end{minipage}
\end{equation}

We can now prove the main result of this section.

\begin{Th}\label{rev Th 3.8}
\label{infinitely many p extensions with no lambda jump}
Let $(E,p)$ be a given pair of a rank 0 elliptic curve over $\Q$ and a \emph{relevant prime} $p\geq 5$.
Then the following assertions hold.
\begin{enumerate}[\textup{(}1\textup{)}]
\item Suppose that the residual representation at $p$ is surjective.
Then, there are infinitely many cyclic number fields of degree-$p$ in which the $\lambda$-invariant does not jump.
In particular, there are infinitely many $\Z/p\Z$-extensions $L/\Q$ in which the rank does not jump in $L_n$ for all $n\geq 0$.
\item Let $(E,p)$ be a pair of rank 0 elliptic curve (defined over $\Q$) with complex multiplication and $p\geq 5$ be a relevant prime.
Suppose further that \ref{assmpn: enemy primes for CM} holds.
Then the same conclusions hold as in the non-CM case.
\end{enumerate}
\end{Th}

\begin{proof}
Since $p$ is assumed to be a relevant prime, we know that $\mu_p(E/\Q)=\lambda_p(E/\Q)=0$.
It follows from Theorem~\ref{thm: Kida formula} that $\mu_p(E/L)=0$ for every degree $p$ extension $L/\Q$.
The result will follow from Proposition \ref{prop: no rank jump} if we can show that there are infinitely many cyclic degree $p$ extensions such that \eqref{eqn: last term is trivial} holds.

Observe that
\[
\mathfrak{d}\left(\cF_{(E,p)}\right) = \frac{1}{p-1} - \mathfrak{d}\left(\cE_{(E,p)}\right) = 
\begin{cases}
\frac{p^2-p-1}{(p-1)^2(p+1)} & \text{ if }E\text{ does not have CM.}\\
\frac{2p^2-p-2}{2(p-1)^2(p+1)} & \text{ if }E\text{ does not have CM.}
\end{cases}
\]
It follows from Lemma \ref{enemy implies jump} that
\eqref{eqn: last term is trivial} holds when every ramified prime is a \emph{friendly prime}.
From the above discussion, we see that there are infinitely many \emph{friendly primes}.
We have also shown in Proposition~\ref{well know prop from jr08} that corresponding to each \emph{friendly prime} (say $q$) there is one $\Z/p\Z$-extension where only $q$ ramifies.
This completes the proof.
\end{proof}

Note that the condition in Theorem \ref{rev Th 3.8}, part (1) requiring that the residual representation is surjective implies that the elliptic curve does not have complex multiplication. The Galois representations associated to CM elliptic curves are studied for instance in \cite{lozanocm}.
The following application of our theorem was pointed out by J.~Morrow.
We begin by stating a result of Gonz{\'a}lez-Jim{\'e}nez--Najman.
\begin{Th}
\label{result from GJN}
Let $E_{/\Q}$ be an elliptic curve and $p>7$ be a prime number.
Let $L/\Q$ be a Galois extension with Galois group $G\simeq \Z/p\Z$.
Then $E(L)_{\tors}=E(\Q)_{\tors}$.
\end{Th}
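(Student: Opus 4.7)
The idea is that any torsion growth from $\Q$ to the $\Z/p\Z$-Galois extension $L$ is obstructed by strong Galois-theoretic constraints when $p > 7$, combined with Mazur's classifications of rational torsion and of rational isogenies. Let $T = E(L)_{\tors}$, $T_0 = E(\Q)_{\tors}$, and $G = \Gal(L/\Q) \cong \Z/p\Z$; note $T^G = T_0$. Supposing $T \supsetneq T_0$, take $P \in T \setminus T_0$ of $\ell$-power order $\ell^k$ with $k$ minimal, so that $\ell P \in E(\Q)$ by minimality. With $\sigma$ a generator of $G$, the element $\sigma P - P$ lies in $E(L)[\ell] \setminus \{0\}$; a short case analysis (using $E(\Q)[p] = 0$ by Mazur when $\ell = p$, and the identity $\sigma^j P = P + j(\sigma P - P)$ at $j = p$ when $\ell \neq p$) shows that $E(L)[\ell] \supsetneq E(\Q)[\ell]$ for some prime $\ell$.

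Since $L/\Q$ is Galois, $E(L)[\ell]$ is a $G_\Q$-stable subgroup of $E[\ell]$ on which $G$ acts nontrivially, and the existence of an order-$p$ element in $\op{Aut} E(L)[\ell] \subseteq \GL_2(\F_\ell)$ forces $p \mid \ell(\ell-1)(\ell+1)$, so either $\ell = p$ or $\ell \equiv \pm 1 \pmod p$. One now splits further by the $\F_\ell$-dimension of $E(L)[\ell]$. If $E(L)[\ell] = E[\ell]$, then the image of $\rho_{E,\ell}$ has order dividing $p$, but its determinant is the mod-$\ell$ cyclotomic character, of order $\ell - 1$; hence $\ell - 1 \mid p$, forcing $\ell = 2$ or $\ell = p+1$, neither compatible with $\ell$ prime and the constraints above when $p > 7$. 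If $E(L)[\ell]$ is cyclic of order $\ell$, the $G_\Q$-action is via a character $\chi \colon G_\Q \to \F_\ell^*$ that factors through $G$. For $\ell = p$, coprimality $\gcd(p, p-1) = 1$ forces $\chi$ to be trivial, so $E(L)[p] = E(\Q)[p] \neq 0$, contradicting Mazur's torsion theorem for $p > 7$. For $\ell \neq p$, if $\chi$ is trivial then $E(\Q)[\ell] \neq 0$ forces $\ell \leq 7$, incompatible with $\ell \equiv \pm 1 \pmod p$ for $p > 7$; if $\chi$ is nontrivial then $p \mid \ell - 1$, and the rational $\ell$-isogeny of $E$ restricts $\ell$ to Mazur's list of twelve primes.

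Cross-referencing Mazur's isogeny list with $p \mid \ell - 1$ for $p > 7$ prime leaves the sole exceptional pair $(p, \ell) = (11, 67)$. This residual case is the main obstacle and must be handled by direct analysis of the (finitely many, and in fact CM) elliptic curves over $\Q$ admitting a rational $67$-isogeny, which correspond to the Heegner discriminant $-67$. For such a curve, the character on the kernel of the $67$-isogeny can be explicitly computed from the associated Hecke Grossencharacter, and one verifies that its image in $\F_{67}^*$ does not have order exactly $11$, thereby excluding the possibility of torsion growth in any $\Z/11\Z$-Galois extension and completing the proof.
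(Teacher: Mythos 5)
Your proposal is correct in substance, but it takes a different route from the paper, whose ``proof'' of this statement is simply a citation of \cite[Theorem 7.2]{GJN20}; what you have written is essentially a self-contained reconstruction of the kind of argument behind that reference. Your reduction step is sound: with $P$ of minimal $\ell$-power order in $E(L)_{\tors}\setminus E(\Q)_{\tors}$ and $Q=\sigma P-P\in E(L)[\ell]$, the identity $\sigma^jP=P+jQ$ (valid once one assumes $Q\in E(\Q)$) kills the case $\ell\neq p$, and Mazur handles $\ell=p$, so $\ell$-torsion genuinely grows; the dichotomy cyclic-versus-full, the determinant/cyclotomic argument, and the passage to an isogeny character $\chi$ of exact order $p$ with $p\mid \ell-1$ are all fine, and crossing Mazur's isogeny list with $p\mid\ell-1$, $p\geq 11$ does leave only $(p,\ell)=(11,67)$. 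Two small remarks: in the trivial-$\chi$ subcases you can conclude more directly, since triviality of the action contradicts the strict inclusion $E(\Q)[\ell]\subsetneq E(L)[\ell]$ without invoking Mazur again; and the one step you leave as ``one verifies'' does check out, so it is not a gap in the approach: every $E_{/\Q}$ with a rational $67$-isogeny is a quadratic twist of the CM curves of discriminant $-67$, and for these the isogeny character satisfies $\chi^2=\omega^{34}$ with $\omega$ the mod-$67$ cyclotomic character, so $\chi$ has order $33$ or $66$; since squaring kills quadratic twists, this is twist-invariant, and a character of order $33$ or $66$ can never factor through $\Gal(L/\Q)\simeq\Z/11\Z$, which finishes your residual case. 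The trade-off between the two routes is the usual one: the paper's citation is shorter and inherits the full strength of \cite{GJN20} (which treats torsion growth far more generally), while your argument is elementary, self-contained, and makes visible exactly which classification theorems (Mazur's torsion and isogeny theorems, plus the CM analysis at $67$) the statement really rests on.
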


\begin{proof}
See \cite[Theorem 7.2]{GJN20}.
\end{proof}

Combining Theorems \ref{infinitely many p extensions with no lambda jump} and \ref{result from GJN}, the following corollary is immediate.

\begin{Cor}\label{rev cor dioph stability}
Let $(E,p)$ be a given pair of rank 0 elliptic curve over $\Q$ and a relevant prime $p> 7$.
If $E_{/\Q}$ is an elliptic curve without CM, suppose that the residual representation at $p$ is surjective.
If $E_{/\Q}$ is an elliptic curve with CM, suppose that \ref{assmpn: enemy primes for CM} holds.
Then, there are infinitely many $\Z/p\Z$-extensions of $\Q$ where the Mordell--Weil group does not grow.
\end{Cor}

\begin{Rem*}
To show that there are ``infinitely many'' $\Z/p\Z$-extensions with no $\lambda$-jump, we only counted those where \emph{exactly one friendly prime} is ramified.
In particular, we counted only those $\Z/p\Z$-extensions which are contained in the cyclotomic field $\Q(\mu_q)$ where $q$ is a prime of the form $1 \pmod{p}$.
Our count ignored the contribution from $\Z/p\Z$-extensions where two or more primes are ramified, all of which are either \emph{friendly primes} or the prime $p$.
It was pointed out to us by R.~Lemke Oliver that Theorem \ref{infinitely many p extensions with no lambda jump} can be made explicit using standard analytic number theory techniques (see for example \cite[Th{\'e}or{\`e}me~2.4]{Ser74}); 
however, our approach is still likely to fall short of proving a positive proportion. 

A recent and significant result in this direction is by Mazur--Rubin (see \cite[Theorem~1.2]{MR18}).
They show that given an elliptic curve $E$, there is a positive density set of primes (call it $\mathcal{S}$) such that for each $p\in \mathcal{S}$ there are infinitely many $\Z/p\Z$-extensions over $\Q$ with $E(L) = E(\Q)$.
In the case of rank 0 elliptic curves (defined over $\Q$), our result is stronger, in the sense that for $p>7$, Corollary \ref{rev cor dioph stability} holds unconditionally for density 1 primes if $E$ is an elliptic curve without CM.
On the other hand, for CM elliptic curves, the result is conditional and applies to a set of primes of density $1/2$.
\end{Rem*}

\subsubsection{Example: CM case}
\label{CM Example}
Now, we work out a particular example in the CM case.
As before, let $p\geq 5$ be a fixed prime and $q\equiv 1 \pmod{p}$ be a different prime.
Let $k\not\equiv 0 \pmod{q}$ and consider the family of curves
\[
E_k: y^2 = x^3 - kx.
\]
Then either of the two statements are true (see \cite[\S 4.4]{Was08}).
\begin{enumerate}[\textup{(}i\textup{)}]
\item If $q\equiv 3 \pmod{4}$, then $\# \widetilde{E_k}(\mathbb{F}_q) = q +1$.
\item If $q\equiv 1 \pmod{4}$, write $q= s^2 + t^2$ with $s\in \Z$, $t\in 2\Z$ and $s+t \equiv 1 \pmod{4}$.
Then 
\[
\# \widetilde{E_k}(\mathbb{F}_q) = \begin{cases}
q + 1 - 2s & \textrm{ if } k \textrm{ is a fourth power mod }q\\
q + 1 + 2s & \textrm{ if } k \textrm{ is a square mod }q \textrm{ but not a fourth power}\\  
q + 1 \pm 2t & \textrm{ if } k \textrm{ is not a square mod }q.
\end{cases}
\]
\end{enumerate}
For this family of elliptic curves, the primes $q\equiv 3 \pmod{4}$ are \emph{supersingular}.
Recall from Lemma \ref{no supersingular primes lemma} that if $q$ is a supersingular prime and $q\equiv 1 \pmod{p}$ then the primes $w|q$ are \emph{friendly} (except for possibly finitely many).
For the sake of concreteness, suppose that $k=1$ (the argument goes through more generally).
By the Chinese Remainder Theorem we know that if 
\begin{align*}
q&\equiv 3 \pmod{4} \textrm{ and }\\
q&\equiv 1 \pmod{p} 
\end{align*}
then, $q \equiv 2p+1 \pmod{4p}$.
By Dirichlet's theorem for primes in arithmetic progressions, we know that there are infinitely many primes satisfying this congruence condition.
Moreover, the proportion of such primes is 
\[
\frac{1}{\varphi(4p)} = \frac{1}{2(p-1)}.
\]
By Proposition \ref{well know prop from jr08}, we know that corresponding to each such $q$ there is \emph{exactly one} cyclic degree-$p$ extension $L/\Q$ in which $q$ is the unique ramified prime.
Therefore, we have produced infinitely many $\Z/p\Z$-extensions where the ramified primes are \emph{friendly}.
By Proposition \ref{prop: no rank jump}, if $\rank_{\Z}(E_k/\Q) =0$ and $\mu_p(E/\Q)=0$ (e.g. when $k=1$) then there are infinitely many $\Z/p\Z$-extensions over $\Q$ where the rank does \emph{not} jump.

We record a specific case of the above discussion.
\begin{Th}
Let $p\geq 5$ be a fixed prime, and consider the elliptic curve
\[
E: y^2 = x^3 - x.
\]
Then, there are infinitely many $\Z/p\Z$-extensions $L/\Q$ such that $\Sel_{p^\infty}(E/L_{\cyc})=0$.
In particular, there are infinitely many $\Z/p\Z$-extensions such that $\rank_{\Z}(E(L))=0$.
\end{Th}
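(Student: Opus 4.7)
The plan is to specialize the argument given in the CM example immediately preceding the theorem to $k=1$, and then package the output so that it applies uniformly to all relevant primes $p\geq 5$. First, observe that $E\colon y^2=x^3-x$ is the congruent-number curve at $n=1$; by Fermat's descent, $E(\Q)$ has rank zero, with $E(\Q)_{\tors}=\{O,(0,0),(\pm 1,0)\}$. The curve has complex multiplication by $\Z[i]$, good reduction away from $2$, and, for $p\geq 5$, is ordinary precisely when $p\equiv 1\pmod 4$ and supersingular when $p\equiv 3\pmod 4$. Under the running assumption that $p$ is a \emph{relevant prime} (Definition \ref{defn: relevant and irrelevant}) for $E$, so that $\mu(E/\Q)=\lambda(E/\Q)=0$, I would aim to feed the curve directly into Proposition \ref{prop: no rank jump}.

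Next I would produce an infinite family of friendly primes of a very clean form. For any prime $q\equiv 3\pmod 4$ the Hasse bound forces $a_q=0$, so $\#\widetilde{E}(\F_q)=q+1$; by Lemma \ref{supersingular primes do not contribute to enemy primes}, any such $q$ with $q\equiv 1\pmod p$ is automatically friendly. Combining the two congruences via the Chinese Remainder Theorem collapses them to the single condition
\[
q\equiv 2p+1\pmod{4p},
\]
and Dirichlet's theorem on primes in arithmetic progressions supplies infinitely many such $q$, in fact with natural density $1/\varphi(4p)=1/(2(p-1))$.

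For each such friendly prime $q$, Proposition \ref{well know prop from jr08} produces a unique cyclic degree-$p$ extension $L_q/\Q$ of conductor $q$, disjoint from $\Q_{\cyc}$ and ramified only at $q$. Because the sole ramified prime of $L_q/\Q$ is friendly, Lemma \ref{enemy implies jump} ensures that \eqref{eqn: last term is trivial} holds, and then Proposition \ref{prop: no rank jump} yields $\mu(E/L_q)=\lambda(E/L_q)=0$. By \cite[Corollary 3.6]{KR21} this is equivalent to $\Sel_{p^\infty}(E/L_{q,\cyc})=0$, and Lemma \ref{lambda geq rank} forces $\rank_{\Z}(E(L_q))=0$. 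Distinct friendly $q$ yield distinct $L_q$ (their conductors differ), so this produces the desired infinite family.

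The only genuinely delicate point is that the theorem is stated for every $p\geq 5$, whereas the machinery built in this section demands relevance of $p$. For $p\equiv 1\pmod 4$ this is the ordinary case, and the triviality of $\mu(E/\Q)$ and $\lambda(E/\Q)$ is available either directly from the Iwasawa main conjecture (via Kato's divisibility) or generically from Proposition \ref{prop: greenberg}; so the hard part is really just to confirm this for the specific $p$ in question. For $p\equiv 3\pmod 4$ the reduction is supersingular, and Proposition \ref{prop: no rank jump} does not apply as stated; covering those primes would require substituting the signed Iwasawa theory and the supersingular Kida formula of \cite[Theorem 6.7]{HL19} that is flagged in the introduction, an ingredient the present section deliberately avoids. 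Thus I would expect the statement to be proved cleanly for the ordinary primes $p\equiv 1\pmod 4$ in the relevant range, with the supersingular case either restricted out or handled by invoking the Hatley--Lei framework as a black box.
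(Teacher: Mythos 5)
Your proposal is essentially the paper's own argument: the theorem is recorded as "a specific case of the above discussion" immediately after the CM example with $k=1$, and the paper likewise computes $\#\widetilde{E}(\F_q)=q+1$ for $q\equiv 3\pmod 4$, combines the congruences to $q\equiv 2p+1\pmod{4p}$, invokes Dirichlet and Proposition \ref{well know prop from jr08} to obtain a unique $\Z/p\Z$-extension ramified only at each such friendly $q$, and then concludes via Proposition \ref{prop: no rank jump}.

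Your caveat about the parity of $p$ is well taken and is in fact a genuine imprecision in the paper. The curve $y^2=x^3-x$ is supersingular at every prime $p\equiv 3\pmod 4$, so for those $p$ the good-ordinary hypothesis of Proposition \ref{prop: no rank jump} (which rests on the Kida formula of Theorem \ref{thm: Kida formula}, i.e.\ on \cite{HM99}) is not satisfied, and the paper's stated proof does not cover them. The paper's theorem is worded for all $p\geq 5$, but the argument as written establishes it only for ordinary primes $p\equiv 1\pmod 4$ (together with the unverified-but-known input that $\mu(E/\Q)=\lambda(E/\Q)=0$ for those $p$); the supersingular case would require the plus/minus machinery of \cite{HL19} that the paper mentions only in passing. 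So you have correctly reconstructed the intended argument and, in addition, correctly identified the hypothesis that is silently assumed.
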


\subsection{}
\label{section: dual problem}
In the last section, we fixed a rank 0 elliptic curve over $\Q$ and analyzed \emph{in how many} $\Z/p\Z$-extensions of $\Q$ did the rank jump.
Now, we ask the following question.
\begin{Ques*}
Let $p\geq 5$ be a fixed odd prime.
For what proportion of rank 0 elliptic curves does there exist \emph{at least one} degree-$p$ Galois extension over $\Q$ disjoint from $\Q_{\cyc}$ such that its rank remains 0 upon base-change? 
\end{Ques*}

Let $E$ be a rank 0 elliptic curve of conductor $N$ and $p$ be a \emph{relevant prime}.
Throughout this section we assume that the Shafarevich--Tate group of rank 0 elliptic curves defined over $\Q$ is finite.
Proposition \ref{prop: greenberg} asserts that density one good ordinary primes are \emph{relevant}.
Further assume that $N$ is divisible by at least one prime of the form $1\pmod{p}$.
The following lemma is a special case of Proposition \ref{prop: no rank jump}.

\begin{Lemma}
\label{lemma: basic}
Keep the setting as above.
If $E$ has no prime of split multiplicative reduction, i.e., $P_1 = \emptyset$, then there exists at least one degree-$p$ cyclic extension $L/\Q$ such that $\lambda_p(E/L)=0$.
In particular, $\rank_{\Z}(E(L))=0$.
\end{Lemma}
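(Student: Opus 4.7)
The plan is to construct an explicit $\Z/p\Z$-extension $L/\Q$ where the ramification contributes nothing to Kida's formula, and then conclude via Proposition \ref{prop: no rank jump}. By hypothesis, there exists a prime $q_0 \mid N$ with $q_0 \equiv 1 \pmod p$. Since $p \geq 5$ and $q_0 \equiv 1 \pmod p$, we have $q_0 \neq p$, so Proposition \ref{well know prop from jr08} provides a (unique) cyclic degree-$p$ extension $L/\Q$ of conductor $q_0$, disjoint from $\Q_{\cyc}$ and ramified only at $q_0$. This is the candidate extension.

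Next, I would check that condition \eqref{eqn: last term is trivial} is satisfied for this $L$. The only prime ramifying in $L/\Q$ is $q_0$, which divides $N$, so $E$ has bad reduction at $q_0$; by the definition of $P_2$ (which requires good reduction), $q_0 \notin P_2$. The standing assumption $P_1 = \emptyset$ gives $q_0 \notin P_1$. Consequently no prime in $P_1 \cup P_2$ is ramified in $L/\Q$, so by Lemma \ref{enemy implies jump} (equivalently, directly from the definitions of $P_1, P_2$) both sums in \eqref{eqn: last term is trivial} vanish.

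Finally, Proposition \ref{prop: no rank jump} applies to this $L$ and yields $\mu(E/L) = \lambda(E/L) = 0$. The last assertion $\rank_{\Z}(E(L))=0$ is then immediate from Lemma \ref{lambda geq rank}.

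There is essentially no obstacle beyond bookkeeping: the hardest step is simply verifying that the single ramified prime $q_0$ avoids both $P_1$ and $P_2$, which is forced by the bad-reduction hypothesis at $q_0$ together with $P_1=\emptyset$. Existence (rather than density) of $L$ is what makes this proof straightforward compared to the counting results in Theorem \ref{infinitely many p extensions with no lambda jump}.
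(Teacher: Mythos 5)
Your proof is correct and follows essentially the same route as the paper: choose a prime $q_0 \mid N$ with $q_0 \equiv 1 \pmod p$, take the unique $\Z/p\Z$-extension $L$ of conductor $q_0$ from Proposition \ref{well know prop from jr08}, and observe that the single ramified prime lies in neither $P_1$ (empty by hypothesis) nor $P_2$ (bad reduction), so Kida's formula forces $\mu(E/L)=\lambda(E/L)=0$ and hence $\rank_{\Z}(E(L))=0$. Routing the conclusion through Proposition \ref{prop: no rank jump} rather than re-running Kida's formula directly is only a cosmetic difference, as the paper itself notes the lemma is a special case of that proposition.
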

\begin{proof}
By Proposition \ref{well know prop from jr08}, there exists one and only one cyclic degree-$p$ extension $L/\Q$ of conductor $q$ if and only if $q\equiv 1 \pmod{p}$.
By assumption, $N$ has a prime divisor of this form (say $q_0$).
Let $L/\Q$ be the $\Z/p\Z$-extension of conductor $q_0$.
Moreover, there is no contribution from the last term of the formula in Theorem \ref{thm: Kida formula}; indeed, for any $w\in P_2$ the ramification index is $e_{L/\Q}(w)=1$. 
Since $L$ is a $\Z/p\Z$-extension, it follows from Theorem \ref{thm: Kida formula} that $\mu_p(E/L)=0$.
In this $\Z/p\Z$-extension, we have forced $\lambda_p(E/L)=0$.
Therefore, $\Sel_{p^\infty}(E/L_{\cyc})=0$.
Clearly, $\Sel_{p^\infty}(E/L)$ is trivial as well; this forces $\rank_{\Z}(E(L))=0$.
\end{proof}

It follows from the proof of the above result that to obtain a lower bound of the proportion of rank 0 elliptic curves for which there is \emph{at least one} degree-$p$ cyclic extension over $\Q$ such that $\Sel_{p^\infty}(E/L)=0$, it is enough to count elliptic curves with the following properties
\begin{enumerate}[\textup{(}i\textup{)}]
\item $E$ has good ordinary reduction at $p\geq 5$.
\item $E$ has \emph{any} reduction type at primes $q\not\equiv 1 \pmod{p}$.
\item $E$ has at least one prime of bad reduction which is not of split multiplicative type at a prime $q\equiv 1\pmod{p}$.
\end{enumerate}

Before proceeding with such a count, we need to define the notion of height.
For an elliptic curve defined over $\Q$, consider the long Weierstrass equation 
\[
y^2+a_1 xy+a_3 y=x^3+a_2 x^2+a_4 x+a_6.
\]
Define the \emph{height} of this Weierstrass equation with integer coefficients $\boldsymbol{a}=(a_1, a_2, a_3, a_4, a_6)$ to be
\[
\height(\boldsymbol{a}) = \max_i \left\{\abs{a_i}^{1/i}\right\}.
\]
Let $S$ be a set of Weierstrass equations with integer coefficients $\boldsymbol{a}$ which are ordered by height.
The \emph{proportion of Weierstrass equations which lie in the set $S$} is defined as
\begin{equation}
\label{eqn: height defi}
\lim_{X\rightarrow \infty}\frac{\#\left\{ \boldsymbol{a}\in S : \height(\boldsymbol{a}) <X\right\}}{\#\left\{ \boldsymbol{a}\in \Z^5 : \height(\boldsymbol{a}) <X\right\}}.
\end{equation}
For our purposes, we restrict to Weierstrass equations which are globally minimal.

\begin{Lemma}
\label{lemma: reduction type proportions}
Let $q$ be any prime.
Suppose that all elliptic curves defined over $\Q$ are ordered by height.
Then, 
\begin{enumerate}[\textup{(}i\textup{)}]
\item the proportion with split multiplicative reduction at $q$ is $
\frac{q-1}{2q^2}$.
\item the proportion with good reduction at $q$ is $\left(1 - \frac{1}{q}\right)$.
\end{enumerate} 
\end{Lemma}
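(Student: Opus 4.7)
The plan is to reduce to a local density calculation modulo $q$ using the short Weierstrass parameterization, and then to transfer the count from all curves to rank $0$ curves via the independence hypothesis already noted. Concretely, for $q \geq 5$ (the cases $q = 2, 3$ require a full Weierstrass model but are handled analogously) I would parameterize elliptic curves over $\Q$ by integer pairs $(A, B)$ with $y^2 = x^3 + Ax + B$ satisfying the usual minimality condition, order them by the naive height $H(E) = \max(4|A|^3, 27 B^2)$, and compute the density of each reduction type as a density in $\F_q^2$ among admissible pairs mod $q$.

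For part (ii), I would use that good reduction at $q$ is equivalent to $q \nmid \Delta = -16(4A^3 + 27B^2)$. The vanishing locus $\{4A^3 + 27B^2 \equiv 0 \pmod q\} \subset \F_q^2$ is a cuspidal rational cubic with exactly $q$ points over $\F_q$, so the density of good-reduction pairs is $(q^2 - q)/q^2 = 1 - 1/q$.

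For part (i), I would parameterize the multiplicative-reduction locus by the $x$-coordinate $x_0$ of the node. Multiplicative reduction at $q$ forces $x^3 + \bar{A}x + \bar{B}$ to factor as $(x - x_0)^2 (x + 2x_0)$ for some $x_0 \in \F_q^\times$ (the value $x_0 = 0$ gives additive reduction), and conversely each such $x_0$ yields a pair $\bar{A} = -3x_0^2$, $\bar{B} = 2x_0^3$ with multiplicative reduction. This gives a bijection between $\F_q^\times$ and the multiplicative-reduction locus in $\F_q^2$. Next, writing $u = x - x_0$, the reduced Weierstrass equation becomes $y^2 = u^2(u + 3x_0)$, so the tangent slopes at the node are $\pm\sqrt{3x_0}$ and the reduction is split exactly when $3x_0$ is a nonzero square in $\F_q$. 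Since multiplication by $3$ is a bijection of $\F_q^\times$ and half its elements are squares, $(q-1)/2$ pairs yield split multiplicative reduction, giving the density $(q-1)/(2q^2)$.

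The main technical point is the passage from these local mod-$q$ densities to genuine densities in the ordering by height, which requires a uniform sieving argument accounting for non-minimal models and the height tail. For the reduction type at a single prime $q$, this is a standard application of an Ekedahl-type sieve, and combined with the rank/reduction independence hypothesis it yields the asserted proportions among rank $0$ elliptic curves.
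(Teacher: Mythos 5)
Your proposal is correct in outline, but it is a genuinely different route from the paper's: the paper does not prove this lemma at all, it simply cites Cremona--Sadek (Theorem 5.1 for part (i), Proposition 2.2 for part (ii)), whereas you reconstruct the local density computation that underlies those results. Your mod-$q$ analysis is right: the cuspidal cubic $4A^3+27B^2\equiv 0$ has exactly $q$ points over $\F_q$, giving $1-\tfrac1q$ for good reduction, and the node at $x_0$ is split precisely when $3x_0$ is a nonzero square, giving $(q-1)/(2q^2)$; these match the stated constants. What the citation buys the paper is that the delicate bookkeeping -- passing from residue densities to densities in the height ordering, the primes $q=2,3$ (which genuinely need long Weierstrass models, not just an ``analogous'' remark), and the treatment of non-minimal models -- is already carried out; what your sketch buys is transparency about where the constants come from. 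One point you should make explicit, because as written it is a small inconsistency rather than just deferred bookkeeping: you impose the minimality condition on $(A,B)$ and then quote the raw mod-$q$ densities, but conditioning on minimality rescales both constants by $(1-q^{-10})^{-1}$, since the pairs removed by the minimality sieve all reduce to the cusp $(0,0)$ modulo $q$. So the literal equalities in the lemma correspond to counting Weierstrass equations (equivalently, ignoring this $O(q^{-10})$ correction); you should either drop the minimality condition or state the corrected constants -- the discrepancy is harmless for the paper's application (positivity of the infinite product over $q\equiv 1\pmod p$), but it matters for the exact values claimed. Finally, note that the restriction to rank $0$ curves is not something your sieve produces: the independence of rank and reduction type is an assumption, stated as such in the paper immediately before the lemma, and your argument should present it as a hypothesis rather than a consequence.
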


\begin{proof}
For (i), see \cite[Theorem 5.1]{CS20}.
For (ii), see \cite[Proposition 2.2]{CS20}.
\end{proof}

Henceforth, we will assume that the rank of the elliptic curve and the reduction type at $\ell$ are independent of each other.
In other words, we will assume that even if we only order the rank 0 elliptic curves by height, the proportion of elliptic curves with split multiplicative reduction or good reduction is the same as that in Lemma~\ref{lemma: reduction type proportions}. 

We know from \cite[Section~3]{CS20} that the local conditions such as the reduction type of elliptic curves at distinct primes are independent.

We now record the assumptions we have made:
\begin{equation}\tag*{\textup{\textbf{Hyp~ind}}}\label{assmpn: for rank 0 indep}
\begin{minipage}{0.85\textwidth}
The reduction type of an elliptic curve at a prime $q$ is independent of its rank.
\end{minipage}
\end{equation}
What we mean is that density results from Lemma~\ref{lemma: reduction type proportions} hold even when we restrict to rank 0 elliptic curves defined over $\Q$.
We can now prove the main result in this section.

\begin{Th}
\label{thm: at least one degree p extension with trivial selmer}
Let $p\geq 5$ be a fixed odd prime.
Suppose that \ref{assmpn: for rank 0 indep} holds. 
Varying over all rank 0 elliptic curves defined over $\Q$ and ordered by height, the proportion with 
\begin{enumerate}[\textup{(}i\textup{)}]
\item good reduction at $p$,
\item any reduction type at $q\not\equiv 1 \pmod{p}$, \emph{and}
\item at least one prime of bad reduction where the reduction type is \emph{not} split multiplicative at a prime $q\equiv 1\pmod{p}$ is given by
\end{enumerate}
\begin{equation}
\label{eqn: expression}
\left(1 - \frac{1}{p} \right)
\left( 1 - \prod_{q \equiv 1 (\textrm{mod }{p})}
\left( \frac{q-1}{2q^2} + 1 - \frac{1}{q}\right) \right).
\end{equation}
Further, suppose that the Shafarevich--Tate group is finite for all rank 0 elliptic curves defined over $\Q$.
There is a positive proportion of rank 0 elliptic curves for which there is \emph{at least one} degree-$p$ cyclic extension over $\Q$ such that $\Sel_{p^\infty}(E/L)=0$ upon base-change.
\end{Th}

\begin{proof}
Let $q \equiv 1\pmod{p}$.
We require that among all such primes there is ``at least one prime of bad reduction where the reduction type is \emph{not} split multiplicative''.
Equivalently, among all primes of the form $1\pmod{p}$ there is ``at least one prime of additive or non-split multiplicative reduction''.
In other words, at $q\equiv 1 \pmod{p}$ we want the \emph{negation} of ``all primes have either good or split multiplicative reduction''.
This gives \eqref{eqn: expression} from Lemma \ref{lemma: reduction type proportions}.

From our earlier discussion, to prove the second assertion it remains to show that \eqref{eqn: expression} is strictly positive.
For any fixed prime $p$, note that
\[
\prod_{q \equiv 1 (\textrm{mod }{p})} \left( \frac{q-1}{2q^2} + 1 - \frac{1}{q}\right) 
= \prod_{q \equiv 1 (\textrm{mod }{p})} \left( 1 - \left(\frac{q+1}{2q^2}\right)\right)< 1.
\]
The inequality follows from the fact that each term in the product is $<1$.
Therefore,
\[
\left( 1 - \prod_{q \equiv 1 (\textrm{mod }{p})} \left( 1 - \left(\frac{q+1}{2q^2}\right)\right)\right) >0.
\]
The claim follows.
\end{proof}

In Table \ref{table: values for positive prop}, we compute the expression \eqref{eqn: expression} for $3\leq p < 50$ and $q \leq 179,424,673$ (i.e., the first 10 million primes).

A reasonable question to ask at this point is the following.
\begin{Ques*}
Let $p\geq 5$ be a fixed prime and $L/\Q$ be a fixed $\Z/p\Z$-extension.
Varying over all rank 0 elliptic curves over $\Q$ ordered by height, for what proportion is $\Sel_{p^\infty}(E/L_{\cyc})=0$?
\end{Ques*}

In this direction, we can provide partial answers.
Given a number field $L/\Q$, we can find a lower bound for the proportion of elliptic curves (defined over $\Q$) for which \eqref{eqn: last term is trivial} holds.
\begin{Prop}
Let $p$ be a fixed odd prime.
Let $L/\Q$ be a fixed $\Z/p\Z$-extension which is tamely ramified at primes $q_1, \ldots, q_r$.
The proportion of elliptic curves defined over $\Q$ (ordered by height) such that \eqref{eqn: last term is trivial} holds has a lower bound of
\[
\prod_{q_i=1}^r \left( \frac{q_i+1}{2q_i^2}\right).
\]
\end{Prop}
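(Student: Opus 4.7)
The plan is to reduce the statement to a purely local condition at each of the tamely ramified primes $q_1,\dots,q_r$, then to impose a sufficient, easy-to-count reduction condition at each $q_i$, and finally to assemble the proportion using the independence of reduction types at distinct primes.

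First I would observe that the only primes $w$ of $L$ that can contribute a nonzero term to either sum in \eqref{eqn: last term is trivial} are those with $e_{L/\Q}(w)>1$, i.e.\ those lying above a prime ramified in $L/\Q$. Primes of $L$ above $p$ are excluded from $P_1\cup P_2$ by definition (both sets only contain primes $w\nmid p$), so the sums in \eqref{eqn: last term is trivial} reduce to contributions from the primes of $L$ above $q_1,\dots,q_r$. By Lemma \ref{enemy implies jump}, the total contribution is zero precisely when none of the $q_i$ is an \emph{enemy prime} of $E$, i.e.\ none of them is a prime of split multiplicative reduction and none is a prime of good reduction at which $p\mid\#\widetilde{E}(\F_{q_i})$.

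Next, I would replace this condition with a stronger, more convenient one: that at each $q_i$, the curve $E$ has bad reduction that is \emph{not} of split multiplicative type. Such a prime is automatically excluded from $P_1$ (not split multiplicative) and from $P_2$ (not good reduction), so this condition is sufficient, though not necessary, for \eqref{eqn: last term is trivial} to hold. This is where the lower bound (rather than an exact density) comes from; we are discarding all the friendly primes of good reduction that would also work.

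Finally, I would compute the density. By Lemma \ref{lemma: reduction type proportions}(ii) the proportion of elliptic curves with good reduction at a prime $q$ is $1-1/q$, so the proportion with bad reduction is $1/q$; subtracting off the split multiplicative proportion $(q-1)/(2q^2)$ from Lemma \ref{lemma: reduction type proportions}(i) leaves
\[
\frac{1}{q}-\frac{q-1}{2q^2}=\frac{q+1}{2q^2}
\]
for the proportion of curves with bad, non-split-multiplicative reduction at $q$. Using the independence of reduction types at distinct primes (as already invoked in the proof of the preceding theorem), the proportion of elliptic curves satisfying this condition simultaneously at each of $q_1,\ldots,q_r$ equals the product $\prod_{i=1}^{r}\frac{q_i+1}{2q_i^2}$, yielding the claimed lower bound.

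The only real subtlety is the appeal to independence of the local reduction conditions at $q_1,\dots,q_r$ when elliptic curves are ordered by naive height; I would handle this by citing the analogous arguments in \cite{CS20} rather than redoing the Chinese-Remainder-Theorem bookkeeping. No hypothesis on the reduction type at $p$ or on the rank of $E$ enters, so the bound is uniform over all elliptic curves $E_{/\Q}$.
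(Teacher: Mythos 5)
Your proposal is correct and takes essentially the same approach as the paper: replace the condition in \eqref{eqn: last term is trivial} with the stronger sufficient condition that each tamely ramified prime $q_i$ is a prime of bad, non-split-multiplicative reduction, compute the local density from Lemma \ref{lemma: reduction type proportions} (the paper writes it as $1 - \bigl(\tfrac{q_i-1}{2q_i^2} + 1 - \tfrac{1}{q_i}\bigr)$, you as $\tfrac{1}{q_i} - \tfrac{q_i-1}{2q_i^2}$, which is the same), and multiply using independence across distinct primes. The only cosmetic difference is that you route the reduction to a local condition through Lemma \ref{enemy implies jump}, whereas the paper states the sufficiency (``$P_1 = \emptyset$ and the primes of good reduction are not ramified'') directly.
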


\begin{proof}
Note that for \eqref{eqn: last term is trivial} to hold, the reduction type at $p$ does not matter.
To find a lower bound, it suffices that \emph{all} the ramified primes (i.e., the $q_i$'s) are primes of bad reduction of non-split multiplicative or additive reduction type.
Indeed, this would ensure $P_1 =\emptyset$ and the primes of good reduction are not ramified.
Equivalently, each $q_i$ is such that it does \emph{not} have good ordinary or split multiplicative reduction.
By Lemma \ref{lemma: reduction type proportions}, the lower bound is
\[
\prod_{q_i=1}^r \left( 1 - \left(\frac{q_i -1}{2q_i^2} + 1 - \frac{1}{q_i}\right)\right) = \prod_{q_i=1}^r \left( \frac{q_i+1}{2q_i^2}\right).
\]
\end{proof}

\begin{Rem*}
The above proposition is not sufficient to answer the question because given $p$, we are unable compute the proportion of elliptic curves for which $p$ is \emph{relevant}.
In particular, we do not know precisely for what proportion of elliptic curves is $p$ a good \emph{ordinary} prime.
A lower bound for this proportion has been computed for small primes and can be found in \cite[Table 1]{KR_submitted}.
From the Hasse interval, one may conclude (roughly) that supersingular elliptic curves should be rare among elliptic curves with good reduction over $\mathbb{F}_p$ (approx. $\frac{1}{2\sqrt{p}}$).
Therefore, as $p$ becomes large, 100\% of the elliptic curves with good reduction at $p$ is of ordinary type (see \cite{Bir68}).
It is also reasonable to expect that as $p$ becomes large, the proportion of rank 0 elliptic curves with $\mu_p(E/\Q)= \lambda_p(E/\Q) =0$ approaches 100\% (see \cite[Conjecture 4.7]{KR21}).
Thus, assuming the rank distribution conjecture, it might be reasonable to expect that as $p$ becomes large, varying over all elliptic curves ordered by height, the proportion with good ordinary reduction at $p$ approaches $\frac{1}{2} \left( 1 - \frac{1}{2\sqrt{p}}\right)\left( 1- \frac{1}{p}\right)$.
\end{Rem*}

\section{\texorpdfstring{Growth of the Selmer group of an elliptic curve in a $\Z/p\Z$-extension}{}}
\label{Appendix}
\par Throughout this section, $p\geq 5$ is a fixed prime number and $E_{/\Q}$ an elliptic curve with good ordinary reduction at $p$ for which the following equivalent conditions are satisfied
\begin{enumerate}[\textup{(}i\textup{)}]
    \item $\mu_p(E/\Q)=0$ and $\lambda_p(E/\Q)=0$,
    \item $\Sel_{p^\infty}(E/\Q_{\cyc})=0$.
\end{enumerate}
Let $L$ be a $\Z/p\Z$-extension of $\Q$.
First, in Proposition \ref{LKR}, we establish a criterion for there to be either a rank-jump or growth in the Shafarevich--Tate group.
The result also explores conditions under which the Selmer group 
\[
\Sel_{p^\infty}(E/\Q)=0\text{ and }\Sel_{p^\infty}(E/L)\neq 0.
\]
This result will then be applied to study a problem in arithmetic statistics which we prove in Theorem \ref{section 6 main thm}.

\subsection{}
Here, we first prove a criterion for non-triviality of the $p$-primary Selmer group. 
\begin{Prop}
\label{LKR}
Let $p$ be an odd prime and $L/\Q$ be a $\Z/p\Z$-extension linearly disjoint from $\Q_{\cyc}$.
In other words, $L\neq \Q_1$, where $\Q_1$ is the first layer in the cyclotomic $\Z_p$-extension.
Let $E_{/\Q}$ be an elliptic curve with good ordinary reduction at $p$ and $\Sigma_L$ be the set of primes $\ell\neq p$ that are ramified in $L$.
Assume that the following conditions are satisfied
\begin{enumerate}[\textup{(}i\textup{)}]
    \item \label{LKR c1} $\rank_{\Z} E(\Q)=0$ and $E(\Q)[p^\infty]=0$,
        \item $\mu_p(E/\Q)=0$ and $\lambda_p(E/\Q)=0$,
    \item there is a prime $\ell\in \Sigma_L$ at which $E$ has good reduction and $p|\#\widetilde{E}(\F_\ell)$,
    \item \label{LKR c4} at each prime $\ell\in \Sigma_L$ at which $E$ has bad reduction, the Kodaira-type of $E_{/\Q_\ell}$ is not $\op{I}_m$ for any integer $m\in \Z_{\geq 1}$,
    \item $\Sha(E/L)[p^\infty]$ is finite.
\end{enumerate}
Then, $\Sel_{p^\infty}(E/\Q)=\Sha(E/\Q)[p^\infty]=0$ and at least one of the following is true
\begin{enumerate}[\textup{(}a\textup{)}]
    \item $\rank_{\Z} E(L)>0$,
    \item $\Sha(E/L)[p^\infty]\neq 0$.
\end{enumerate}
In particular, the Selmer group $\Sel_{p^\infty}(E/L)$ becomes non-zero.
\end{Prop}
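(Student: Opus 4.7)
The plan is to combine Mazur's control theorem, Kida's formula (Theorem \ref{thm: Kida formula}), Greenberg's $\Gamma$-Euler characteristic formula for $p$-primary Selmer groups, and the structure theorem for $\Lambda$-modules, to deduce simultaneously the vanishing statements over $\Q$ and the non-triviality of $\Sel_{p^\infty}(E/L)$.

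First I would dispatch the easy half. By condition (ii) together with the remark following Proposition \ref{prop: greenberg}, $\Sel_{p^\infty}(E/\Q_{\cyc})=0$. Mazur's control theorem then gives an injection $\Sel_{p^\infty}(E/\Q)\hookrightarrow \Sel_{p^\infty}(E/\Q_{\cyc})^{\Gamma}=0$, with trivial kernel thanks to $E(\Q)[p^\infty]=0$ in (i). Hence $\Sel_{p^\infty}(E/\Q)=0$, and the short exact sequence \eqref{sesSelmer} combined with (i) forces $\Sha(E/\Q)[p^\infty]=0$.

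Next I would apply Theorem \ref{thm: Kida formula} to the $\Z/p\Z$-extension $L/\Q$. Since $\lambda(E/\Q)=0$, the formula reduces to
\[
\lambda(E/L)=\sum_{w\in P_1}\bigl(e_{L_{\cyc}/\Q_{\cyc}}(w)-1\bigr)+2\sum_{w\in P_2}\bigl(e_{L_{\cyc}/\Q_{\cyc}}(w)-1\bigr).
\]
Condition (iv) rules out Kodaira type $\op{I}_m$ at bad primes in $\Sigma_L$, so no prime of split multiplicative reduction is ramified in $L/\Q$ and the $P_1$-sum vanishes. Condition (iii) provides $\ell\in\Sigma_L$ of good reduction with $\widetilde{E}(\F_\ell)[p]\neq 0$; as $[L:\Q]=p$ this $\ell$ is totally ramified, and since $\Q_{\cyc}/\Q$ is unramified away from $p$, every $w\mid\ell$ in $L_{\cyc}$ has $e_{L_{\cyc}/\Q_{\cyc}}(w)=p$ and lies in $P_2$ (because $\widetilde{E}(\F_\ell)[p]\hookrightarrow E(L_{\cyc,w})[p]$). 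Therefore $\lambda(E/L)\geq 2(p-1)\geq 8>0$, while Theorem \ref{thm: Kida formula} also yields $\mu(E/L)=0$.

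Finally I would perform a case analysis on the characteristic polynomial $f_L(T)=\prod_j h_j(T)$ of $\Sel_{p^\infty}(E/L_{\cyc})^{\vee}$. Since $\mu(E/L)=0$, each $h_j$ is distinguished (so $h_j(0)\in p\Z_p$), and $\sum_j\deg h_j=\lambda(E/L)>0$. If some $h_j(0)=0$, then $T\mid f_L$, so $\Sel_{p^\infty}(E/L_{\cyc})^{\Gamma_L}$ has positive $\Z_p$-corank, and Mazur's control theorem together with (v) and \eqref{sesSelmer} give $\rank_{\Z} E(L)>0$. Otherwise every $h_j(0)\in p\Z_p\setminus\{0\}$, so $v_p(f_L(0))\geq 1$, and the $\Gamma_L$-Euler characteristic $\chi(\Gamma_L,\Sel_{p^\infty}(E/L_{\cyc}))=|\Z_p/f_L(0)|$ is finite and divisible by $p$. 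By Greenberg's Euler characteristic formula, $\chi$ equals $|\Sha(E/L)[p^\infty]|$ times Tamagawa numbers and local factors at places above $p$, divided by $|E(L)[p^\infty]|^2$. Condition (iv) and the Tamagawa/non-anomalous conditions inherited from Step 1 make each such factor coprime to $p$ (including in the inert case at $p$, where $\#\widetilde{E}(\F_{p^p})[p]=\#\widetilde{E}(\F_p)[p]$ by the good ordinary condition), and $E(L)[p^\infty]=0$ follows from (i) via the observation that every nonzero $\F_p[\Z/p\Z]$-module has nontrivial invariants. The $p$-divisibility of $\chi$ must therefore be absorbed by $|\Sha(E/L)[p^\infty]|$, giving $\Sha(E/L)[p^\infty]\neq 0$.

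The main obstacle is the careful verification in the second case that every local factor in Greenberg's Euler characteristic formula over $L$ remains coprime to $p$: one must propagate the Tamagawa conditions from $\Q$ to $L$ at bad primes (where Kodaira type and ramification behavior interact) and handle the places above $p$ together with the denominator $|E(L)[p^\infty]|^2$, so that the positive $p$-adic valuation of $\chi$ is localized entirely into $|\Sha(E/L)[p^\infty]|$.
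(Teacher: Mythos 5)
Your proposal is correct in substance and rests on the same pillars as the paper's proof---Kida's formula (Theorem \ref{thm: Kida formula}), the Euler characteristic formula \eqref{ECF}, and Mazur's control theorem---but you organize the logic differently. The paper argues by contradiction: assuming $\op{rank}E(L)=0$ and $\Sha(E/L)[p^\infty]=0$, it shows the numerator of \eqref{ECF} over $L$ equals $1$, deduces $\chi(L_{\cyc}/L,E[p^\infty])=1$, hence $\mu(E/L)=\lambda(E/L)=0$ by Proposition \ref{anweshprop}, contradicting Kida's formula. You instead run Kida's formula first to get $\lambda(E/L)>0$ and $\mu(E/L)=0$, then split on whether $T$ divides the characteristic element: $T\mid f_L$ yields a rank jump (this uses the finite-cokernel half of the control theorem, which is standard although the paper only quotes the finite-kernel half), while $T\nmid f_L$ yields $p\mid \chi(L_{\cyc}/L,E[p^\infty])=\abs{\Z_p/f_L(0)}$ and hence $p\mid \#\Sha(E/L)[p^\infty]$; this buys a more explicit view of where the rank/Sha dichotomy comes from, at the cost of justifying the identity $\chi=|f_L(0)|_p^{-1}$ directly rather than funnelling through Proposition \ref{anweshprop}. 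The one genuine loose end is your phrase ``Tamagawa/non-anomalous conditions inherited from Step 1'': the facts $p\nmid c_\ell(E/\Q)$ for all $\ell$ and $p\nmid \#\widetilde{E}(\F_p)$ are not hypotheses of the proposition, and your Step 1 (the control-theorem derivation of $\op{Sel}_{p^\infty}(E/\Q)=0$) does not produce them. They have to be extracted from $\mu(E/\Q)=\lambda(E/\Q)=0$ by applying the same Euler characteristic identity over $\Q$, i.e.\ $\chi(\Q_{\cyc}/\Q,E[p^\infty])=1$ forces every factor of \eqref{ECF} over $\Q$ to be trivial---this is precisely the role of Lemma \ref{boring lemma 2} and Proposition \ref{anweshprop} in the paper. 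Once that is made explicit, the passage to $L$ works as you say: Lemma \ref{boringlemma} (your condition (iv)) handles the ramified primes away from $p$, and the residue-field and torsion statements over $L$ follow from the $p$-group-action argument of \cite[Proposition 1.6.12]{NSW08}; note it is this argument, not ``the good ordinary condition'', that gives $\widetilde{E}(k_v)[p^\infty]=0$ when $p$ is inert in $L$.
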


\begin{Rem*}
Condition \eqref{LKR c4} above is automatically satisfied when $E$ has good reduction at all primes $\ell\in \Sigma_L$.
In a preprint from 2014, J.~Brau has made an attempt at a comparable result on the growth of Selmer groups (see \cite[Corollary 1.3]{Bra14}).
Our methods differ significantly from those of Brau and do not require the semi-stability hypothesis.
\end{Rem*}

In order to prove the above result, we briefly recall some key properties of the Euler characteristic associated to an elliptic curve.
The reader is referred to \cite{CS00book,Raysujatha1, Raysujatha2} for a more comprehensive discussion of the topic.

Let $E_{/\Q}$ be an elliptic curve and $L/\Q$ a number field extension. Assume that the following conditions are satisfied.
\begin{enumerate}[\textup{(}i\textup{)}]
    \item $\rank_{\Z} E(L)=0$, 
    \item $E$ has good ordinary reduction at $p$, 
    \item $\Sha(E/L)[p^\infty]$ is finite.
\end{enumerate}
Then, the cohomology groups 
$H^i\left(L_{\cyc}/L, \Sel_{p^\infty}(E/L_{\cyc})\right)$ are finite and the \emph{Euler characteristic} $\chi\left(L_{\cyc}/L, E[p^\infty]\right)$ is defined as follows
\[
\chi\left(L_{\cyc}/L, E[p^\infty]\right):=\frac{\# H^0\left(L_{\cyc}/L, \Sel_{p^\infty}(E/L_{\cyc})\right)}{\# H^1\left(L_{\cyc}/L, \Sel_{p^\infty}(E/L_{\cyc})\right)}.
\]
The Euler characteristic is an important invariant associated to the Selmer group, and captures its key Iwasawa theoretic properties.
There is an explicit formula for this invariant, which we now describe.
At each prime $v\nmid p$ of $L$, let $c_v(E/L)$ denote the Tamagawa number at $v$, and let $c_v^{(p)}(E/L)$ be the $p$-part, given by 
$c_v^{(p)}(E/L):=|c_v(E/L)|_p^{-1}$.
Here $\abs{\cdot}_p$ is the absolute value, normalized by $\abs{p}_p=p^{-1}$.
At each prime $v$ of $L$, let $k_v$ be the residue field at $v$, and $\widetilde{E}(k_v)$ be the group of $k_v$-valued points of the reduction of $E$ at $v$.
\begin{Th}\label{boringthm1}
Let $E_{/\Q}$ be an elliptic curve, $p$ an odd prime, and $L/\Q$ be a number field extension.
Assume that the following conditions are satisfied
\begin{enumerate}[\textup{(}i\textup{)}]
    \item $\rank_{\Z} E(L)=0$, 
    \item $E$ has good ordinary reduction at $p$, 
    \item $\Sha(E/L)[p^\infty]$ is finite.
\end{enumerate} Then, the following assertions hold:
\begin{enumerate}[\textup{(}a\textup{)}]
    \item\label{boringthm1 c1} the Euler characteristic $\chi\left(L_{\cyc}/L, E[p^\infty]\right)$ is an integer;
further, it is a power of $p$,
    \item\label{boringthm1 c2} the Euler characteristic is given by the following formula
\begin{equation}
\label{ECF}
\chi\left(L_{\cyc}/L, E[p^\infty]\right)= \frac{\#\Sha(E/L)[p^\infty]\times \prod_v c_v^{(p)}(E/L)\times \prod_{v|p} \left(\#\widetilde{E}(k_v)[p^\infty]\right)^2}{\left(\#E(L)[p^\infty]\right)^2}.
\end{equation}
\end{enumerate}
\end{Th}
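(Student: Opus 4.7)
The plan is to establish parts (a) and (b) together by computing the Euler characteristic through Mazur's control theorem and an explicit analysis of the local contributions. First I would invoke Kato's theorem (\cite[Theorem 14.4]{Kat04}) to conclude that $M := \op{Sel}_{p^\infty}(E/L_{\op{cyc}})$ is cotorsion as a $\Lambda$-module; let $f(T)$ denote its characteristic power series. A standard result in Iwasawa theory says that whenever $H^0(\Gamma, M)$ and $H^1(\Gamma, M)$ are both finite---equivalently whenever $f(0)\neq 0$---the ratio of their orders equals $|f(0)|_p^{-1}$, manifestly a non-negative power of $p$. Under our hypotheses (rank zero and finite Shafarevich--Tate group) these cohomology groups are indeed finite; this is where Mazur's control theorem, used next, enters. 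This settles (a).

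For (b) I would apply Mazur's control theorem, which produces a restriction map
\[
\alpha : \op{Sel}_{p^\infty}(E/L) \longrightarrow M^\Gamma
\]
with finite kernel and cokernel, both described in terms of the local kernels and cokernels of the maps $J_v(E/L) \to J_v(E/L_{\op{cyc}})^\Gamma$ for $v \in S$. A parallel analysis yields an analogous expression for $\# H^1(\Gamma, M)$ via the coinvariants. Chasing the snake lemma applied to the diagram comparing the defining local-global exact sequences of the Selmer groups at $L$ and at $L_{\op{cyc}}$ then expresses $\chi(L_{\op{cyc}}/L, E[p^\infty])$ as a product of a global factor and local factors indexed by $v \in S$.

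The local factors are then evaluated in the standard way: at a prime $v \nmid p$, Tate local duality combined with the identification of $H^1(G_{L_v}, E)[p^\infty]$ with the $p$-part of the component group of the N\'eron model yields the factor $c_v^{(p)}(E/L)$; at a prime $v \mid p$, the good ordinary reduction hypothesis identifies the local condition with the formal group part, and a direct computation using Hensel's lemma produces $(\#\widetilde{E}(k_v)[p^\infty])^2$. The global factor contributes $\#\op{Sel}_{p^\infty}(E/L)/(\#E(L)[p^\infty])^2$, the denominator coming from inflation-restriction applied to $H^1(G_{L,S}, E[p^\infty]) \to H^1(G_{L_{\op{cyc}},S}, E[p^\infty])^\Gamma$. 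Since $\op{rank} E(L) = 0$, the short exact sequence \eqref{sesSelmer} degenerates to $\op{Sel}_{p^\infty}(E/L) \cong \Sha(E/L)[p^\infty]$, assembling everything into \eqref{ECF}.

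The main obstacle is the numerical bookkeeping in the local computations: correctly identifying the $p$-parts of the Tamagawa factors, keeping the duality pairings straight, and verifying that the formal group calculation at primes above $p$ produces exactly $(\#\widetilde{E}(k_v)[p^\infty])^2$ with the correct exponent. The conceptual framework is classical, going back to Coates--Howson and Coates--Sujatha; I would therefore follow the template of \cite{CS00book, Raysujatha1, Raysujatha2} rather than reproducing every local computation from scratch.
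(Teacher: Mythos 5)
Your proposal is correct and follows essentially the same route as the paper, which for this statement simply defers to the literature: integrality via the identity $\chi(\Gamma,M)=\abs{f(0)}_p^{-1}$ for the characteristic element (the paper quotes \cite[Lemma 3.4 and Remark 3.5]{HKR}) and the formula \eqref{ECF} via the control-theorem-plus-local-computation argument of \cite[Theorem 3.3]{CS00book}, which is exactly the template you describe. One small repair: Kato's theorem applies to abelian extensions of $\Q$ while $L$ here is an arbitrary number field, but cotorsionness need not be taken as an input, since hypotheses (i) and (iii) make $\Sel_{p^\infty}(E/L)$ finite and then Mazur's control theorem together with Nakayama's lemma already forces the Pontryagin dual of $\Sel_{p^\infty}(E/L_{\cyc})$ to be $\Lambda$-torsion with nonvanishing characteristic element at $T=0$ --- a fix implicit in your own use of the control theorem.
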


\begin{proof}
The assertion \eqref{boringthm1 c1} follows from \cite[Lemma 3.4 and Remark 3.5]{HKR}, and for \eqref{boringthm1 c2}, the reader is referred to \cite[Theorem 3.3]{CS00book}.
\end{proof}

The next result relates the Euler characteristic and Iwasawa invariants.
\begin{Prop}\label{anweshprop}
Let $E_{/L}$ be an elliptic curve satisfying the following conditions:
\begin{enumerate}[\textup{(}i\textup{)}]
    \item $E$ has good ordinary reduction at all primes $v|p$,
    \item $\rank_{\Z} E(L)=0$,
    \item $\Sha(E/L)[p^\infty]$ is finite.
\end{enumerate}
Then, the following are equivalent
\begin{enumerate}[\textup{(}a\textup{)}]
    \item $\mu_p(E/L)=0$ and $\lambda_p(E/L)=0$,
    \item $\chi(L_{\cyc}/L, E[p^\infty])=1$.
\end{enumerate}
\end{Prop}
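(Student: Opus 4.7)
The plan is to establish the identity
\[
\chi(L_{\op{cyc}}/L, E[p^\infty]) = |f_E^{(p)}(0)|_p^{-1},
\]
where $f_E^{(p)}(T)$ is the characteristic element of the Pontryagin dual $X := \op{Sel}_{p^\infty}(E/L_{\op{cyc}})^{\vee}$. The equivalence in the proposition then drops out by inspecting the factorization of $f_E^{(p)}$.

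First I would verify that the identity makes sense. Under the standing hypotheses---good ordinary reduction at $p$, rank zero over $L$, and finite $\Sha(E/L)[p^\infty]$---Kato's theorem ensures that $X$ is a finitely generated torsion $\Lambda$-module, and Mazur's control theorem together with \eqref{sesSelmer} forces $\op{Sel}_{p^\infty}(E/L_{\op{cyc}})^{\Gamma}$ to be finite. Via Pontryagin duality the latter is equivalent to $X^{\Gamma}$ being finite, hence to $T \nmid f_E^{(p)}(T)$, i.e.\ $f_E^{(p)}(0) \neq 0$. This also makes both $H^{0}$ and $H^{1}$ of $\Gamma$ acting on $\op{Sel}_{p^\infty}(E/L_{\op{cyc}})$ finite, so that $\chi$ is well-defined.

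Next I would prove the identity. By the structure theorem, $X$ is pseudo-isomorphic to
\[
Y = \bigoplus_{i=1}^{s} \Lambda/(p^{m_i}) \oplus \bigoplus_{j=1}^{t} \Lambda/(h_j(T))
\]
with each $h_j$ distinguished. Pontryagin duality gives $\chi = \#X_{\Gamma}/\#X^{\Gamma}$, and since the kernel and cokernel of the pseudo-isomorphism are finite they have trivial Herbrand quotient, so $\chi(X) = \chi(Y)$. Applying the snake lemma to the exact sequence
\[
0 \to Y^{\Gamma} \to Y \xrightarrow{\gamma - 1} Y \to Y_{\Gamma} \to 0
\]
summand by summand gives $Y^{\Gamma} = 0$ (since $T$ acts injectively on each $\Lambda/(p^{m_i})$ and on each $\Lambda/(h_j(T))$, as $h_j(0) \neq 0$) and $\#Y_{\Gamma} = \prod_i p^{m_i} \cdot \prod_j p^{v_p(h_j(0))} = |f_E^{(p)}(0)|_p^{-1}$, which yields the claimed identity.

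To finish, I would read off the equivalence from the identity. Each $h_j$ is distinguished of positive degree, so its constant term is divisible by $p$ and $|h_j(0)|_p \leq p^{-1}$. Writing $f_E^{(p)}(T) = p^{\mu(E/L)} \cdot \prod_j h_j(T)$ gives
\[
\chi = p^{\mu(E/L)} \cdot \prod_{j=1}^{t} |h_j(0)|_p^{-1},
\]
a product of $p$-powers each of which is at least $p$ as soon as the corresponding exponent is nonzero. Hence $\chi = 1$ exactly when $\mu(E/L) = 0$ and $t = 0$, i.e.\ when $\mu(E/L) = \lambda(E/L) = 0$. The main subtleties will be the bookkeeping through Pontryagin duality (to swap invariants and coinvariants when passing between $\op{Sel}_{p^\infty}$ and $X$) and the preservation of Herbrand quotients under pseudo-isomorphism; the control-theorem input is standard but essential, since without $f_E^{(p)}(0) \neq 0$ the Euler characteristic would not be a finite quantity and the identity would be vacuous.
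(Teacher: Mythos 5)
Your proof is correct, and it fills in a genuine argument where the paper offers only a citation: the paper's proof of this proposition simply points to \cite{HKR} (Proposition 3.6) and notes the result first appeared in \cite{Raysujatha1}, whereas you reconstruct the classical Iwasawa-theoretic derivation via the identity $\chi(L_{\cyc}/L, E[p^\infty]) = |f_E^{(p)}(0)|_p^{-1}$. The route you take is the standard one in the literature (Perrin-Riou, Greenberg, Coates--Sujatha): pass to the Pontryagin dual $X$, use multiplicativity of the Herbrand quotient together with the triviality of the Herbrand quotient for finite modules to reduce to the elementary module $Y$ furnished by the structure theorem, and compute invariants and coinvariants of each cyclic summand by hand. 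The bookkeeping is right: $T$ acts injectively on $\Lambda/(p^{m_i})$ and on $\Lambda/(h_j)$ precisely because $\gcd(T,p)=1$ and $h_j(0)\neq 0$, so $Y^\Gamma=0$; and $\#Y_\Gamma = \prod_i p^{m_i}\prod_j p^{v_p(h_j(0))}$, which yields the formula. The final equivalence follows because a distinguished polynomial of positive degree has $p\mid h_j(0)$, so each of the $t$ factors contributes at least a factor $p$ to $\chi$; hence $\chi=1$ iff $\mu=0$ and $t=0$, i.e.\ $\lambda=0$.

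One small point worth tightening: you invoke Kato's theorem to conclude $X$ is $\Lambda$-torsion, but Kato's theorem requires $E$ to be defined over $\Q$ and $L/\Q$ abelian, whereas the proposition as stated is for a general number field $L$ and an elliptic curve over $L$. The cotorsionness should instead be extracted from exactly the mechanism you mention in the same sentence: the hypotheses (rank zero, finite $\Sha(E/L)[p^\infty]$, finite torsion) force $\Sel_{p^\infty}(E/L)$ to be finite, and then Mazur's control theorem together with a Nakayama argument shows $X$ is a finitely generated torsion $\Lambda$-module with $f_E^{(p)}(0)\neq 0$. Kato is not needed and, as stated, does not apply in this generality.
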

\begin{proof}
The conditions on the elliptic curve are in place in order for the Euler characteristic to be defined.
The result follows from \cite[Proposition 3.6]{HKR}.
However, the first proof of this result was given in \cite{Raysujatha1}, in a more general context.
\end{proof}

\begin{Lemma}\label{boring lemma 2}
Let $E_{/\Q}$ be an elliptic curve which satisfies the following conditions 
\begin{enumerate}[\textup{(}i\textup{)}]
    \item $E$ has good ordinary reduction at $p$,
    \item $\Sha(E/\Q)[p^\infty]$ is finite,
    \item $\rank_{\Z}E(\Q)=0$,
    \item $E(\Q)[p^\infty]=0$,
    \item
    $\mu_p(E/\Q)=0$ and $\lambda_p(E/\Q)=0$.
\end{enumerate}
Then, we have that $\Sel_{p^\infty}(E/\Q)=\Sha(E/\Q)[p^\infty]=0$.
\end{Lemma}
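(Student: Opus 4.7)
The plan is to deduce the vanishing of both $\Sha(E/\Q)[p^\infty]$ and $\op{Sel}_{p^\infty}(E/\Q)$ from the Euler characteristic formula, together with the short exact sequence \eqref{sesSelmer}. The hypotheses are precisely arranged so that each factor appearing in the Euler characteristic formula is forced to be trivial.

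First, I would check that Proposition \ref{anweshprop} applies with $L=\Q$: the elliptic curve has good ordinary reduction at $p$, rank zero over $\Q$, and $\Sha(E/\Q)[p^\infty]$ is finite by assumption. Since $\mu(E/\Q)=0$ and $\lambda(E/\Q)=0$, the equivalence in that proposition yields $\chi(\Q_{\op{cyc}}/\Q,E[p^\infty])=1$. Next, I would invoke Theorem \ref{boringthm1}\eqref{boringthm1 c2}, which expresses this Euler characteristic via formula \eqref{ECF}:
\[
1=\chi(\Q_{\op{cyc}}/\Q,E[p^\infty])=\frac{\#\Sha(E/\Q)[p^\infty]\cdot \prod_v c_v^{(p)}(E/\Q)\cdot \prod_{v\mid p}\bigl(\#\widetilde{E}(k_v)[p^\infty]\bigr)^{2}}{\bigl(\#E(\Q)[p^\infty]\bigr)^{2}}.
\]
The hypothesis $E(\Q)[p^\infty]=0$ makes the denominator equal to $1$. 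Each factor in the numerator is a non-negative power of $p$, hence an integer at least $1$. Since their product equals $1$, each factor must be exactly $1$; in particular $\#\Sha(E/\Q)[p^\infty]=1$, which gives the first claim.

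Finally, I would apply the short exact sequence \eqref{sesSelmer} with $F=\Q$. Because $\op{rank} E(\Q)=0$ and $E(\Q)[p^\infty]=0$, the group $E(\Q)$ has no $p$-power torsion and is finite-of-order-prime-to-$p$ tensored with $\Q_p/\Z_p$, so $E(\Q)\otimes \Q_p/\Z_p=0$. Combined with the vanishing of $\Sha(E/\Q)[p^\infty]$ established in the previous step, this immediately forces $\op{Sel}_{p^\infty}(E/\Q)=0$, completing the argument. There is no real obstacle here: the only substantive input is the Euler characteristic formula, and the remaining work is just checking that each quantity is a non-negative power of $p$ so that the product-equals-one conclusion is non-trivial.
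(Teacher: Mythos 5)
Your proof is correct and follows essentially the same route as the paper: invoke Proposition \ref{anweshprop} to get $\chi(\Q_{\cyc}/\Q,E[p^\infty])=1$, use the Euler characteristic formula \eqref{ECF} together with the observation that every factor is a non-negative power of $p$ to force $\#\Sha(E/\Q)[p^\infty]=1$, and then use the short exact sequence \eqref{sesSelmer} with $E(\Q)\otimes\Q_p/\Z_p=0$ to conclude $\op{Sel}_{p^\infty}(E/\Q)=0$. The only cosmetic difference is the order of steps: the paper first uses \eqref{sesSelmer} to identify $\op{Sel}_{p^\infty}(E/\Q)$ with $\Sha(E/\Q)[p^\infty]$ before applying the Euler characteristic argument, whereas you derive the vanishing of $\Sha$ first and then pass to the Selmer group; the mathematical content is identical.
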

\begin{proof}
Recall the well-known short exact sequence (see \ref{sesSelmer}),
\[
0\rightarrow E(\Q)\otimes \Q_p/\Z_p\rightarrow \Sel_{p^\infty}(E/\Q)\rightarrow \Sha(E/\Q)[p^\infty]\rightarrow 0.
\]
Since the $E(\Q)$ is finite and $E(\Q)[p^\infty]=0$, it follows from the above that 
\[
\Sel_{p^\infty}(E/\Q)=\Sha(E/\Q)[p^\infty].
\]
It follows from Proposition \ref{anweshprop} that $\chi(\Q_{\cyc}/\Q, E[p^\infty])=1$.
On the other hand, by \eqref{ECF},
\[
\chi\left(\Q_{\cyc}/\Q, E[p^\infty]\right)= \#\Sha(E/\Q)[p^\infty]\times \prod_\ell c_\ell^{(p)}(E/\Q)\times \left(\#\widetilde{E}(\F_p)[p^\infty]\right)^2.
\]
As a result, it is indeed the case the $\Sha(E/\Q)[p^\infty]=0$.
\end{proof}

\begin{Lemma}\label{boringlemma}
Let $p\geq 5$ be a prime, $L/\Q$ be a $\Z/p\Z$ extension, and $\ell\neq p$ be a prime which ramifies in $L$.
Assume that the Kodaira type of $E_{/\Q_\ell}$ is not $\op{I}_m$ for any integer $m\in \Z_{\geq 1}$.
Then, $\prod_{v\mid\ell} c_v^{(p)}(E/L)=1$.
\end{Lemma}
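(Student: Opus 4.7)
The plan is to reduce the product to a single Tamagawa number and then bound that Tamagawa number by the order of the component group of the N\'eron model. Since $L/\Q$ is cyclic of prime degree $p$ and $\ell$ ramifies, $\ell$ is totally ramified, so there is a unique prime $v$ of $L$ above $\ell$, and $L_v/\Q_\ell$ is totally (and, since $p\neq \ell$, tamely) ramified of degree $p$. Thus the product $\prod_{v\mid \ell} c_v^{(p)}(E/L)$ contains only one factor, and it suffices to show that $p \nmid c_v(E/L)$ for this $v$.

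The key step is to verify that the Kodaira type of $E$ over $L_v$ is again not $\op{I}_m$ for any $m\geq 1$. Granting this, the component group $\Phi_v$ of the N\'eron model of $E/\mathcal{O}_{L_v}$ has order at most $4$: the Kodaira types $\op{II}, \op{III}, \op{IV}, \op{I}_n^*, \op{IV}^*, \op{III}^*, \op{II}^*$ (with $n\geq 0$) yield component groups of order in $\{1,2,3,4\}$. Hence $c_v(E/L)$ divides $|\Phi_v|\leq 4<p$, giving $c_v^{(p)}(E/L)=1$.

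To show that the Kodaira type over $L_v$ avoids $\op{I}_m$, I would split into two sub-cases. If $E/\Q_\ell$ has potentially good reduction (Kodaira types other than $\op{I}_m$ and $\op{I}_m^*$), inertia acts on the $\ell'$-adic Tate module (for any prime $\ell'\neq \ell$) through a finite quotient whose tame part has order in $\{2,3,4,6\}$, which is coprime to $p$. Since the tame inertia subgroup of $G_{L_v}$ sits as an index-$p$ open subgroup of that of $G_{\Q_\ell}$ and $p$ is coprime to the image order, the two tame images coincide, so $E/L_v$ still has additive, potentially good reduction, i.e., Kodaira type again among $\{\op{II}, \op{III}, \op{IV}, \op{I}_0^*, \op{IV}^*, \op{III}^*, \op{II}^*\}$. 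If instead $E/\Q_\ell$ has Kodaira type $\op{I}_m^*$, then $E$ is a nontrivial quadratic twist of an elliptic curve with multiplicative reduction; since $[L_v:\Q_\ell]=p$ is odd, the twisting character remains nontrivial on restriction to $G_{L_v}$, so $E/L_v$ still has additive, potentially multiplicative reduction, i.e., Kodaira type $\op{I}_{m'}^*$ for some $m'\geq 1$.

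The main technical subtlety is handling residue characteristics $\ell\in\{2,3\}$, where wild inertia can contribute nontrivially to the image in $\op{Aut}(T_{\ell'}E)$. But wild inertia is pro-$\ell$ and therefore unaffected by the degree-$p$ tame base change (since $p$ is coprime to $\ell$), so only the tame quotient of inertia matters for the Kodaira-type analysis --- and that is precisely what the two sub-cases above control.
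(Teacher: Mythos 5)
Your proof is correct, and it reaches the same structural endpoint as the paper's --- show that $E/L_v$ does not acquire Kodaira type $\op{I}_n$ with $n\geq 1$, then observe that for any other type the Tamagawa number is bounded by the component-group order, which is at most $4 < p$ --- but you establish the key step by a different, self-contained argument. The paper simply appeals to Kida's tabulation of how Kodaira types transform under tame base change (the cited Table~1 in Kida's paper, which records that type $\op{I}_n$ over $L_v$ can only arise from type $\op{I}_m$ over $\Q_\ell$), together with Silverman's table of component groups. You instead argue from first principles: you separate potentially good reduction (preserved under any base change, so the type stays in the list $\op{II}, \op{III}, \op{IV}, \op{I}_0^*, \op{IV}^*, \op{III}^*, \op{II}^*$) from the additive potentially multiplicative case $\op{I}_m^*$ (where the obstruction is a quadratic twist character, which survives an odd-degree base change, keeping the type in the $\op{I}_{m'}^*$ family), and you explicitly dispose of wild inertia at residue characteristic $2$ or $3$ by noting that a prime-to-$\ell$ tame base change cannot alter the wild part. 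This buys you independence from Kida's table and makes the mechanism transparent --- in particular it makes visible exactly where $p\geq 5$ and the oddness of $p$ are used --- at the cost of a somewhat longer argument than a citation. One small observation: in the potentially good case you do not actually need the order of the tame inertia image to be coprime to $p$; potentially good reduction alone already rules out type $\op{I}_n$ with $n\geq 1$ over any extension, so that portion of your argument can be compressed. Your explicit remark that $\ell$ is totally ramified (so the product has a single factor) is also a clarification the paper glosses over, though it does not change the argument since the paper handles a general prime $v\mid\ell$.
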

\begin{proof}
Since $\ell\neq p$, and $L/\Q$ is a $\Z/p\Z$-extension, it follows that $\ell$ is tamely ramified in $L$.
Thus, the results for base-change of Tamagawa numbers in \cite[Table 1, pp. 556-557]{Kida} apply.
Fix a prime $v|\ell$ and let $e=e_{L/\Q}(v)$ be the ramification index.
Since it is assumed that $\ell$ is ramified in $L$, it follows that $e=p$.
Since $p\geq 5$, the Tamagawa number $c_v^{(p)}(E/L)\neq 1$ if and only if the Kodaira type of $E_{/L_v}$ is $\op{I}_n$ for an integer $n\in \Z_{\geq 1}$ that is divisible by $p$ (see \cite[p. 448]{Sil09}).
According to \textit{loc. cit.}, the only way this is possible is if the Kodaira type of $E_{/\Q_\ell}$ is $\op{I}_m$ for $m\in \Z_{\geq 1}$.
Indeed, if the Kodaira type of $E_{/\Q_\ell}$ is $\op{I}_m$, then upon base-change to $L_v$, it becomes $\op{I}_{me}=\op{I}_{mp}$.
However, by assumption, this case does not occur.
Therefore, $c_v^{(p)}(E/L)=1$ for all primes $v|\ell$ of $L$.
This completes the proof.
\end{proof}

We now give a proof of Proposition \ref{LKR}.
\begin{proof}[proof of Proposition \ref{LKR}]
According to Lemma \ref{boring lemma 2}, we have that 
\[
\Sel_{p^\infty}(E/\Q)=\Sha(E/\Q)[p^\infty]=0.
\]
Assume by way of contradiction that 
\[
\rank_{\Z} E(L)=0\text{ and }\Sha(E/L)[p^\infty]=0.
\] Since $\rank_{\Z} E(L)=0$, it follows from Theorem \ref{boringthm1} that the Euler characteristic $\chi(L_{\cyc}/L, E[p^\infty])$ is defined and given by the formula 
\begin{equation}\label{yetanotherequation}
\chi\left(L_{\cyc}/L, E[p^\infty]\right)= \frac{\#\Sha(E/L)[p^\infty]\times \prod_v c_v^{(p)}(E/L)\times \prod_{v|p} \left(\#\widetilde{E}(k_v)[p^\infty]\right)^2}{\left(\#E(L)[p^\infty]\right)^2}.
\end{equation}
On the other hand, the Euler characteristic $\chi\left(\Q_{\cyc}/\Q, E[p^\infty]\right)$ is given by the formula
 \[\chi\left(\Q_{\cyc}/\Q, E[p^\infty]\right)= \#\Sha(E/\Q)[p^\infty]\times \prod_\ell c_\ell^{(p)}(E/\Q)\times \left(\#\widetilde{E}(\F_p)[p^\infty]\right)^2.\]
Note that $E(\Q)[p^\infty]$ does not contribute to the above formula since it is assumed to be trivial.
Since it is assumed that $\mu_p(E/\Q)=0$ and $\lambda_p(E/\Q)=0$, it follows from Proposition \ref{anweshprop} that $\chi\left(\Q_{\cyc}/\Q, E[p^\infty]\right)=1$.

We shall use this, and the assumptions on $E$ to show that $\chi\left(L_{\cyc}/L, E[p^\infty]\right)=1$ as well.
Since $\chi\left(\Q_{\cyc}/\Q, E[p^\infty]\right)=1$, it follows that
\[
\# \Sha(E/\Q)[p^{\infty}]=1, \ \prod_\ell c_\ell^{(p)}(E/\Q)=1, \text{ and }\#\widetilde{E}(\F_p)[p^\infty]=1.\]
In order to show that $\chi\left(L_{\cyc}/L, E[p^\infty]\right)=1$, we show that
\[
\# \Sha(E/L)[p^{\infty}]=1, \ \prod_v c_v^{(p)}(E/L)=1, \text{ and }\prod_{v|p}\#\widetilde{E}(k_v)[p^\infty]=1.
\]
By assumption, $\Sha(E/L)[p^\infty]=0$, and hence, $\#\Sha(E/L)[p^\infty]=1$.
It follows from Lemma \ref{boringlemma} that $\prod_{v\nmid p}c_v^{(p)}(E/L)=1$.
If $p$ splits or ramifies in $L$, then $k_v=\F_p$ for all primes $v|p$.
Since $\#\widetilde{E}(\F_p)[p^\infty]=1$, it follows that $\#\widetilde{E}(k_v)[p^\infty]=1$ as well.
On the other hand, suppose $p$ is inert in $L$ and $v|p$ is the only prime above $p$ in $L$.
Then, since $k_v/\F_p$ is a $p$-extension, it follows from \cite[Proposition 1.6.12]{NSW08} that
\[
\#\widetilde{E}(\F_p)[p^\infty]=1\Rightarrow \#\widetilde{E}(k_v)[p^\infty]=1.
\]
Hence, the numerator of \eqref{yetanotherequation} is $1$.
Theorem \ref{boringthm1} asserts the Euler characteristic is an integer, and so, we deduce that $\chi(L_{\cyc}/L, E[p^\infty])=1$.
We deduce from Proposition \ref{anweshprop} that
\[
\mu_p(E/L)=0 \text{ and }\lambda_p(E/L)=0.
\]
In particular, we find that
\[
\lambda_p(E/L)=\lambda_p(E/\Q)=0.
\]
On the other hand, recall that according to Kida's formula 
\[
\lambda_p(E/L)=p\lambda_p(E/\Q)+\sum_{w \in P_1} \left(e_{L/\Q}(w/\ell)-1\right)+\sum_{w \in P_2} 2\left(e_{L/\Q}(w/\ell)-1\right).\]
However, there is a prime $\ell\in \Sigma_L$ for which $E$ has good reduction at $\ell$ and $p| \#\widetilde{E}(\F_\ell)$, we deduce that
\[
\sum_{w \in P_2} 2\left(e_{L/\Q}(w/\ell)-1\right)>0.
\]
Thus, from the above formula shows that
\[
\lambda_p(E/L)>\lambda_p(E/\Q).
\]
In particular,  $\lambda_p(E/L)>0$, this is a contradiction.
Therefore, we have shown that either $\rank_{\Z} E(L)>0$ or $\Sha(E/L)[p^\infty]\neq 0$.
\end{proof}

We illustrate Proposition \ref{LKR} through an example in the case when $p=5$. \newline
\emph{Example:} Let $L/\Q$ be the unique degree $5$ Galois extension contained in $\Q(\mu_{31})$.
The only prime that ramifies in $L$ is $\ell=31$.
Consider the elliptic curve $E: y^2=x^3+42$.
Explicit calculation shows that $E$ satisfies conditions (i)-(iv) of the aforementioned proposition.
Assuming the finiteness of the Shafarevich--Tate group over $L$, we conclude that either there is a rank jump, or an increase in the size of the Shafarevich--Tate group.
It can be checked via standard computations on {\tt Magma} that $E(L)=E(\Q)$.
Therefore, the growth occurs in the Shafarevich--Tate group.

\subsection{}\label{section 4.2}
We now come to an application to arithmetic statistics.
Recall that we have fixed an elliptic curve $E_{/\Q}$ and a prime $p$ for which the aforementioned conditions are satisfied.
Consider the family of $\Z/p\Z$-extensions $L$ of $\Q$ not contained in $\Q_{\cyc}$ and ramified at precisely one prime.
For each prime $q\equiv 1\pmod{p}$, there is exactly one such extension $L_q/\Q$, which is ramified at $q$ (see \ref{well know prop from jr08}).
Note that $L_q$ is contained in $\Q(\mu_q)$.
For $X>0$, let $\pi(X)$ be the prime counting function (i.e., denote the number of primes $q\leq X$) and $\pi'(X)$ be the number of primes $q\leq X$ for which 
\begin{enumerate}[\textup{(}i\textup{)}]
\item $q\equiv 1\pmod{p}$ \emph{and}
\item $\Sel_{p^\infty}(E/L_q)\neq 0$.
\end{enumerate}
Note that since $L_q/\Q$ is a $\Z/p\Z$-extension, we have the following implication
\[
E(\Q)[p^\infty]=0\Rightarrow E(L_q)[p^\infty]=0,
\]
see \cite[Proposition 1.6.12]{NSW08}.
Therefore, the non-vanishing of $\Sel_{p^\infty}(E/L_q)$ is equivalent to at least one of the following conditions being satisfied
\begin{enumerate}[\textup{(}1\textup{)}]
    \item $\rank_{\Z}E(L_q)>0$,
    \item $\Sha(E/L_q)[p^\infty]\neq 0$.
\end{enumerate}
Thus, if the Selmer group becomes non-zero after base-change, then, either there is a rank jump, or the Shafarevich-Tate group witnesses growth.
The main result of this section is Theorem \ref{section 6 main thm}, where it is shown that the set of primes $q\equiv 1\pmod{p}$ for which the above conditions are satisfied is cut out by explicit Chebotarev conditions.
In other words, there is an explicit subset $\mathcal{S}\subset \Gal(\Q(E[p])/\Q)$ such that for any prime $q\neq p$ coprime to the conductor of $E$, 
\[
\Frob_q\in \mathcal{S}\Rightarrow \Sel_{p^\infty}(E/L_q)\neq 0.
\]
Which is to say that if the Frobenius of a prime $q\nmid Np$ lies in the Chebotarev set $\mathcal{S}$, then, the Selmer group becomes non-zero when base changed to $L_q$.
We calculate the size of $\mathcal{S}$ and apply the Chebotarev Density Theorem to obtain a lower bound for $\limsup_{X\rightarrow \infty} \pi'(X)/\pi(X)$.
Said differently, we are able to show there is growth in the Selmer group in $L_q$ for a positive density set of primes $q$.
Let $\bar{\rho}:\Gal(\bar{\Q}/\Q)\rightarrow \GL_2(\F_p)$ be the Galois representation on $E[p]$.
We make the simplifying assumption that $\bar{\rho}$ is surjective.
By the well-known open image theorem of Serre, this assumption is satisfied for all but finitely many primes $p$, as long as $E$ does not have complex multiplication.
Let $\Q(E[p])$ be the Galois extension of $\Q$ which is fixed by $\ker\bar{\rho}$.
We identify $\Gal(\Q(E[p])/\Q)$ with its image under $\bar{\rho}$, which according to our assumption is all of $\GL_2(\F_p)$.
Let $N=N_E$ be the conductor of $E$.
If $q$ is a prime which is coprime to $N p$, then $q$ is unramified in $\Q(E[p])$.
Set $a_q(E):=q+1-\# \widetilde{E}(\F_q)$; then the characteristic polynomial of $\bar{\rho}(\Frob_q)$ is $x^2-a_q x+q$.
Let $\mathcal{S}$ consist of elements $\sigma\in\Gal(\Q(E[p])/\Q)$ such that 
\[
\trace\bar{\rho}(\sigma)=2\text{ and }\det\bar{\rho}(\sigma)=1.\]
We arrive at the following useful criterion for $p$ to divide $\#\widetilde{E}(\F_q)$.
\begin{Lemma}
\label{lemma useful criterion}
Let $q\nmid N p$ be a prime. 
Then, the following conditions are equivalent
\begin{enumerate}[\textup{(}i\textup{)}]
    \item $q\equiv 1\pmod{p}$ and $p$ divides $\#\widetilde{E}(\F_q)$, 
    \item $\Frob_q\in \mathcal{S}$.
\end{enumerate}
\end{Lemma}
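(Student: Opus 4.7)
The plan is to translate both conditions into congruences involving $q$ and $a_q$ modulo $p$, and to observe that they coincide. The key input is the standard identification of the characteristic polynomial of Frobenius acting on $E[p]$.

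Concretely, since $q\nmid Np$, the prime $q$ is unramified in $\Q(E[p])/\Q$, and $\op{Frob}_q\in\op{Gal}(\Q(E[p])/\Q)$ is well defined up to conjugation. The characteristic polynomial of $\bar{\rho}(\op{Frob}_q)$ on $E[p]$ is $x^2 - a_q\, x + q \pmod p$. Consequently
\[
\op{trace}\bar{\rho}(\op{Frob}_q) \equiv a_q \pmod p, \qquad \op{det}\bar{\rho}(\op{Frob}_q) \equiv q \pmod p,
\]
the second equality reflecting the fact that $\det\bar{\rho}$ is the mod-$p$ cyclotomic character (obtainable from the Weil pairing on $E[p]$). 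Therefore $\op{Frob}_q\in\mathcal{S}$ if and only if $q\equiv 1\pmod p$ and $a_q\equiv 2\pmod p$.

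On the arithmetic side, $\#\widetilde{E}(\F_q) = q+1-a_q$, so $p\mid \#\widetilde{E}(\F_q)$ if and only if $a_q\equiv q+1\pmod p$. Under the hypothesis $q\equiv 1\pmod p$, this congruence is equivalent to $a_q\equiv 2\pmod p$. Thus (i) holds if and only if the pair $(q\bmod p,\ a_q\bmod p)$ equals $(1,2)$, which by the previous paragraph is exactly the condition (ii). Conversely, if (ii) holds then $q\equiv 1\pmod p$ automatically, and $a_q\equiv 2\pmod p$ forces $p\mid \#\widetilde{E}(\F_q)$. This establishes the equivalence.

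There is no substantive obstacle here; the only mildly delicate point is citing the correct description of $\det\bar{\rho}$ (via the Weil pairing) so that the determinant condition $\det\bar{\rho}(\sigma)=1$ correctly corresponds to $q\equiv 1\pmod p$. Everything else is a direct computation with the Hasse trace relation $\#\widetilde{E}(\F_q)=q+1-a_q$.
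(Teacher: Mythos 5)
Your proof is correct and follows essentially the same route as the paper's: both identify $\op{det}\bar{\rho}(\op{Frob}_q)\equiv q$ and $\op{trace}\bar{\rho}(\op{Frob}_q)\equiv a_q$ modulo $p$ from the characteristic polynomial $x^2-a_qx+q$, and then combine this with $\#\widetilde{E}(\F_q)=q+1-a_q$ under the assumption $q\equiv 1\pmod p$. No meaningful difference in approach.
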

\begin{proof}
Since $\det\bar{\rho}(\Frob_q)=q$, we find that $\det\bar{\rho}(\Frob_q)=1$ if and only if $q\equiv  1\pmod{p}$.
Assume that these equivalent conditions are satisfied.
Note that $p$ divides $\# \widetilde{E}(\F_q)$ if and only if $q+1-\trace\left(\bar{\rho}(\Frob_q)\right)=0$.
Since $q\equiv 1\pmod{p}$, it follows that $p$ divides $\# \widetilde{E}(\F_q)$ if and only if $\trace\left(\bar{\rho}(\Frob_q)\right)=2$.
\end{proof}
\begin{Lemma}\label{S cardinality}
The cardinality of $\mathcal{S}$ is $p^2$.
\end{Lemma}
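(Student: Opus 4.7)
The plan is to carry out a direct enumeration. A matrix $\sigma = \begin{pmatrix} a & b \\ c & d \end{pmatrix} \in \op{GL}_2(\F_p)$ lies in $\mathcal{S}$ precisely when $a+d=2$ and $ad-bc=1$. Substituting $d=2-a$ into the determinant condition gives $bc = a(2-a)-1 = -(a-1)^2$. So the task reduces to counting quadruples $(a,b,c,d)$ with $d = 2-a$ and $bc = -(a-1)^2$, which I will split into two cases according to whether $a=1$ or $a\neq 1$.

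In the first case, $a=1$ forces $d=1$ and $bc=0$, so either $b=0$ (giving $p$ choices for $c$) or $c=0$ (giving $p$ choices for $b$), with overlap at $b=c=0$. This contributes $2p-1$ elements to $\mathcal{S}$. In the second case, for each of the $p-1$ values $a\neq 1$, the quantity $-(a-1)^2$ is a nonzero element of $\F_p$, forcing $b\in \F_p^\times$ and then determining $c = -(a-1)^2/b$ uniquely. This contributes $(p-1)(p-1) = (p-1)^2$ elements.

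Adding the two cases gives
\[
|\mathcal{S}| = (2p-1) + (p-1)^2 = p^2,
\]
which is the claimed count.

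There is essentially no obstacle; the argument is routine linear algebra over $\F_p$. As a sanity check, one may compare with the conjugacy-class description: elements of $\mathcal{S}$ are precisely those with characteristic polynomial $(x-1)^2$, hence are either the identity (one element) or conjugate to the Jordan block $\begin{pmatrix} 1 & 1 \\ 0 & 1 \end{pmatrix}$, whose centralizer in $\op{GL}_2(\F_p)$ has order $p(p-1)$, giving a conjugacy class of size $|\op{GL}_2(\F_p)|/(p(p-1)) = p^2-1$; the total is again $1 + (p^2-1) = p^2$.
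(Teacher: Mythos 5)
Your count is correct and follows essentially the same route as the paper: parametrize elements of $\mathcal{S}$ as matrices $\left(\begin{smallmatrix} a & b \\ c & 2-a \end{smallmatrix}\right)$ with $bc=-(a-1)^2$, split into the cases $a=1$ (giving $2p-1$ solutions) and $a\neq 1$ (giving $(p-1)^2$), and sum to get $p^2$. Your closing conjugacy-class sanity check (identity plus the unipotent class of size $p^2-1$) is a nice independent confirmation not in the paper, but the main argument is the same.
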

\begin{proof}
Since $\bar{\rho}$ is assumed to be surjective, we identify $\mathcal{S}$ with the set of all matrices in $\GL_2(\F_p)$ with trace $2$ and determinant $1$.
These matrices are all of the form $g=\mtx{a}{b}{c}{2-a}$, where $a(2-a)-bc=1$.
We count the number of of such matrices.
Rewrite the equation as $bc=-1+a(2-a)=-(a-1)^2$.
For each choice of $a$ such that $a\neq 1$, the number of solutions is $(p-1)$.
For $a=1$, either $b=0$ or $c=0$, or both.
Thus the number of solutions for $a=1$ is $2p-1$.
Putting it all together,
\[
\# \mathcal{S}=(p-1)^2+(2p-1)=p^2.
\]
\end{proof}
We now prove Theorem \ref{Anwesh's thm}, which is the main result of this section.
\begin{Th}
\label{section 6 main thm}
Let $p\geq 5$ be a fixed prime and $E_{/\Q}$ an elliptic curve for which the following conditions are satisfied
\begin{enumerate}[\textup{(}i\textup{)}]
    \item $E$ has good ordinary reduction at $p$, 
    \item $\rank_{\Z} E(\Q)=0$ and $E(\Q)[p^\infty]=0$,
    \item $\mu_p(E/\Q)=0$ and $\lambda_p(E/\Q)=0$,
    \item the image of the residual representation $\bar{\rho}$ is surjective.
\end{enumerate}
For $X>0$, let $\pi'(X)$ be the number of primes $q\equiv 1\pmod{p}$ for which the following equivalent conditions are satisfied
\begin{enumerate}[\textup{(}i\textup{)}]
    \item $\Sel_{p^\infty}(E/L_q)\neq 0$,
    \item Either, $\rank_{\Z} E(L_q)>0$ or $\Sha(E/L_q)[p^\infty]\neq 0$ (or both conditions are satisfied).
\end{enumerate}
Then,
\[
\limsup_{X\rightarrow \infty} \frac{\pi'(X)}{\pi(X)}\geq \frac{p}{(p-1)^2(p+1)}.
\]
\end{Th}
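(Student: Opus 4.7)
The plan is to combine Proposition \ref{LKR} with an application of the Chebotarev Density Theorem to the conjugation-stable subset $\mathcal{S}\subset\op{GL}_2(\F_p)$ introduced before Lemma \ref{lemma useful criterion}. The set $\mathcal{S}$ was engineered so that $\op{Frob}_q\in\mathcal{S}$ is precisely the residual Galois condition that, together with $q\nmid Np$, forces the single hypothesis in Proposition \ref{LKR} that is not already in the standing assumptions, namely the existence of $\ell\in\Sigma_{L_q}$ at which $E$ has good reduction and $p\mid\#\widetilde{E}(\F_\ell)$.

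First, I would fix a prime $q\nmid Np$ such that $\op{Frob}_q\in\mathcal{S}$. Lemma \ref{lemma useful criterion} gives $q\equiv 1\pmod p$ and $p\mid\#\widetilde{E}(\F_q)$. By Proposition \ref{well know prop from jr08} there is a unique $\Z/p\Z$-extension $L_q\subset\Q(\mu_q)$, and its only ramified prime is $q$, so $\Sigma_{L_q}=\{q\}$. Now I verify the five hypotheses of Proposition \ref{LKR} for $L=L_q$: conditions (i) and (ii) are among the standing hypotheses of Theorem \ref{section 6 main thm}; condition (iii) is satisfied with $\ell=q$ by the previous sentence, using $q\nmid N$; condition (iv) is vacuous because $q$ is a prime of good reduction; and condition (v), finiteness of $\Sha(E/L_q)[p^\infty]$, is the standing assumption of the paper. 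Proposition \ref{LKR} then yields $\op{Sel}_{p^\infty}(E/L_q)\neq 0$, which by the short exact sequence \eqref{sesSelmer} and the vanishing of $E(L_q)[p^\infty]$ (which follows from $E(\Q)[p^\infty]=0$ via \cite[Proposition 1.6.12]{NSW08}) is equivalent to a rank jump or growth in the Shafarevich--Tate group.

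It remains to count. Since $\bar\rho$ is surjective, the identification $\op{Gal}(\Q(E[p])/\Q)\cong\op{GL}_2(\F_p)$ is valid, and the field $\Q(E[p])$ is unramified away from $Np$. The subset $\mathcal{S}$ is determined by the conjugation-invariant data (trace, determinant) and so is a union of conjugacy classes to which the Chebotarev Density Theorem applies. By Lemma \ref{S cardinality} one has $\#\mathcal{S}=p^2$, and
\[
\#\op{GL}_2(\F_p)=(p^2-1)(p^2-p)=p(p-1)^2(p+1).
\]
Excluding the finite set of primes dividing $Np$ does not affect natural density, so the set of primes $q$ with $\op{Frob}_q\in\mathcal{S}$ has density
\[
\frac{\#\mathcal{S}}{\#\op{GL}_2(\F_p)}=\frac{p^2}{p(p-1)^2(p+1)}=\frac{p}{(p-1)^2(p+1)},
\]
and every such prime contributes to $\pi'(X)$, yielding the claimed lower bound on $\limsup \pi'(X)/\pi(X)$.

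There is no real obstacle in this proof once Proposition \ref{LKR} and Lemmas \ref{lemma useful criterion}, \ref{S cardinality} are in hand; the main subtlety is the bookkeeping check that condition (iv) of Proposition \ref{LKR} is automatic for the primes $q$ captured by $\mathcal{S}$. This is why the approach isolates the good-reduction locus: excluding $q\mid N$ is a finite condition that is harmless for density, but it is precisely what prevents the potentially delicate multiplicative-reduction case from appearing. The bound is an $\limsup$ rather than an equality because the approach only detects Selmer growth coming from the Kida term in Theorem \ref{thm: Kida formula}; other primes $q\equiv 1\pmod p$ could conceivably contribute as well.
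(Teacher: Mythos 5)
Your proposal is correct and follows essentially the same route as the paper: apply Lemma \ref{lemma useful criterion} to translate $\op{Frob}_q\in\mathcal{S}$ into the conditions $q\equiv 1\pmod p$ and $p\mid\#\widetilde{E}(\F_q)$, feed this (with $\Sigma_{L_q}=\{q\}$ and good reduction at $q$ making condition (iv) of Proposition \ref{LKR} vacuous) into Proposition \ref{LKR} to get $\op{Sel}_{p^\infty}(E/L_q)\neq 0$, and then count via Lemma \ref{S cardinality} and the Chebotarev Density Theorem, using $\#\op{GL}_2(\F_p)=p(p-1)^2(p+1)$. Your explicit verification of the hypotheses of Proposition \ref{LKR} and the remark on conjugation-invariance of $\mathcal{S}$ are just slightly more detailed versions of what the paper does implicitly.
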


\begin{proof}
Let $q$ be a prime such that $q\nmid N p$.
It follows from Lemma \ref{lemma useful criterion} that if $\Frob_q\in \mathcal{S}$, then $q\equiv 1\pmod{p}$ and $p|\#E(\F_q)$.
Note that $\Sigma_{L_q}=\{q\}$ and $E$ has good reduction at $q$.
Since $p|\#E(\F_q)$, it follows from that Proposition \ref{LKR} that $\Sel_{p^\infty}(E/L_q)\neq 0$.
Therefore, by the Chebotarev Density Theorem, and Lemma $\ref{S cardinality}$,
\[
\limsup_{X\rightarrow\infty} \frac{\pi'(X)}{\pi(X)}\geq \frac{\# \mathcal{S}}{\#\left(\GL_2(\F_p)\right)}=\frac{p^2}{(p^2-1)(p^2-p)}=\frac{p}{(p-1)^2(p+1)}.
\]
\end{proof}

\section{Growth of Shafarevich--Tate groups in cyclic extensions}
\label{section: growth of Sha}
In this section, we study the growth of the Shafarevich--Tate group.
First, when $p=2$ we prove an effective version of Matsuno's theorem (see Theorem \ref{prop B mat09}).

When $p\neq 2$, it was shown by Clark and Sharif (see \cite[Theorem~3]{CS10}) that there exists a degree-$p$ extension over $\Q$, not necessarily Galois such that the $p$-rank of the Shafarevich--Tate group becomes arbitrarily large.
We study the possibility of improving this result to Galois degree-$p$ extensions.

\begin{Defi}
Let $p$ be any prime.
Fix an elliptic curve $E_{/\Q}$ with good reduction at $p$.
Let $K/\Q$ be a cyclic degree-$p$ extension.
Define the set $T_{E/K}$ to be the set of primes in $K$ above $\ell(\neq p)$ satisfying either of the following properties
\begin{enumerate}[\textup{(}i\textup{)}]
\item $\ell$ is a prime of good reduction of $E$ which is ramified in $K/\Q$ and $E(\Q_{\ell})$ contains an element of order $p$.
\item $\ell$ is a prime of split multiplicative reduction which is inert in $K/\Q$ and the Tamagawa number $c_{\ell}$ is divisible by $p$.
\end{enumerate}
\end{Defi} 

\begin{Defi}
Let $G$ be an abelian group.
Define the \emph{$p$-rank} of $G$ as
\[
\rank_p (G) = \rank_p (G[p]) := \dim_{\mathbb{F}_p}\left(G[p]\right).
\] 
\end{Defi}
The following lemma plays a key role in answering questions pertaining to both sections.
\begin{Lemma}
\label{lemma: 2-rank of selmer}
With notation as above,
\[
\rank_{p} \Sel_p(E/K) \geq \# T_{E/K} - 4.
\]
\end{Lemma}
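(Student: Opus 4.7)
The plan is to derive the bound from a Greenberg--Wiles local--global counting formula applied to $\Sel_p(E/K)$, combined with a local analysis showing that each prime in $T_{E/K}$ contributes an additional dimension beyond the ``generic'' Selmer conditions. I would begin by realizing $\Sel_p(E/K)$ as the kernel of the global-to-local restriction map into $\bigoplus_w H^1(K_w, E[p])/L_w$, where $L_w$ is the image of the local Kummer map $E(K_w)/pE(K_w) \hookrightarrow H^1(K_w, E[p])$. Using the self-duality of $E[p]$ (up to Tate twist by $\mu_p$) under the Weil pairing together with local Tate duality, the Greenberg--Wiles formula expresses $\dim_{\F_p}\Sel_p(E/K)$ in terms of the dimension of the dual Selmer group, the local discrepancies $\dim_{\F_p} L_w - \tfrac12 \dim_{\F_p} H^1(K_w,E[p])$ summed over $w$, and a bounded global correction controlled by $\dim H^0(K, E[p])$ and $\dim H^0(K, \mu_p)$. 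Each of the latter two is bounded by $2$ since $E[p]$ has $\F_p$-dimension $2$; together they account for the $-4$ in the claimed inequality.

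The heart of the argument is then a case-by-case local computation at each place $w$ of $K$. At primes $w\nmid p$ of good reduction that are \emph{not} in $T_{E/K}$, the image $L_w$ coincides with the unramified subgroup $H^1_{\mathrm{ur}}(K_w, E[p])$, which by local Tate duality has the same $\F_p$-dimension as $H^0(K_w, E[p])$, so the corresponding contribution vanishes. At a prime $w \in T_{E/K}$ of type (i)---good reduction, ramified in $K/\Q$, with $E(\Q_\ell)$ containing a point of order $p$---an inflation--restriction argument coupled with Kummer theory shows that the ramification together with the presence of the $\Q_\ell$-rational $p$-torsion point forces $L_w$ to properly contain the unramified subgroup, contributing at least one to the Selmer rank. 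At a prime $w \in T_{E/K}$ of type (ii)---split multiplicative reduction, inert in $K/\Q$, with $p \mid c_\ell$---I would use the Tate uniformization $E(\overline{K_w}) \cong \overline{K_w}^\times / q_E^{\Z}$ and the hypothesis $p \mid c_\ell$ to exhibit an extra class in $L_w$ lying outside the image of the identity component of the N\'eron model, again yielding an extra dimension.

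Summing the local contributions, which total at least $\#T_{E/K}$, and absorbing the archimedean places and primes above $p$ into the bounded correction gives $\dim_{\F_p}\Sel_p(E/K) \geq \#T_{E/K} - 4$. The principal obstacle is the local analysis at primes of type (ii): the Tate-curve computation demands a precise description of $L_w$ for split multiplicative reduction and a careful bookkeeping argument to confirm that the combination $p \mid c_\ell$ together with inertness of $w$ in $K/\Q$ genuinely enlarges $L_w$ rather than merely permuting existing classes. The analysis at type (i) primes, though more routine, still requires care in tracking how the ramification interacts with the $\Q_\ell$-rational $p$-torsion point through the relevant Kummer sequence.
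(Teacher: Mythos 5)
The paper does not actually prove this lemma: it is quoted verbatim from Matsuno \cite[Proposition 4.3]{Mat09}, whose argument is a genus-theory/norm-map computation over the cyclic extension $K/\Q$, and your proposed route both differs from it and has concrete gaps. The main one is the global step. With the Kummer images $L_w$ as local conditions, $E[p]$ is self-dual and the Selmer conditions are their own orthogonal complements under local Tate duality, so the Greenberg--Wiles formula degenerates: the local discrepancy $\dim_{\F_p} L_w-\dim_{\F_p} H^0(K_w,E[p])$ vanishes at every finite $w\nmid p$ (including every $w\in T_{E/K}$), the contributions at $p$ and at the archimedean places are bounded independently of $T_{E/K}$, and the global $H^0$-terms cancel; no quantity of size $\#T_{E/K}$ ever enters. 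Replacing the conditions at $T_{E/K}$ by strict/relaxed ones only bounds the \emph{relaxed} Selmer group from below, not $\Sel_p(E/K)$ itself, so ``summing the local contributions'' does not produce the claimed inequality. Your explanation of the $-4$ via $\dim H^0(K,E[p])+\dim H^0(K,\mu_p)$ is likewise not how any version of the formula works (for self-dual modules those global terms cancel, and $\dim_{\F_p}H^0(K,\mu_p)\le 1$ anyway); in the actual proof the $4=2+2$ bounds two cohomology groups of $\Gal(K/\Q)$ acting on $E(K)$, each of $\F_p$-dimension at most $\dim_{\F_p}E[p]=2$.

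Your local analysis at type (i) primes is also incorrect: for $w\nmid p$ at which $E$ has good reduction, the image of the local Kummer map \emph{equals} the unramified subgroup $H^1_{\mathrm{ur}}(K_w,E[p])$ (both have order $\#E(K_w)[p]$, and the image lies in the unramified classes), irrespective of whether $K_w/\Q_\ell$ is ramified; so $L_w$ never ``properly contains'' the unramified subgroup and there is no extra local dimension to harvest this way. What condition (i) actually provides --- and what the Kramer--Matsuno argument uses --- is that the local norm map $E(K_w)\rightarrow E(\Q_\ell)$ fails to be surjective when $\ell$ is tamely ramified in $K$ and $E(\Q_\ell)[p]\neq 0$, and condition (ii) yields the same failure at inert primes of split multiplicative reduction with $p\mid c_\ell=\ord_\ell(q_E)$ via the Tate parametrization (the step you flag as unresolved). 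The global mechanism is then Galois descent for $G=\Gal(K/\Q)$: the group $\bigoplus_v E(\Q_v)/\mathrm{N}_{K_w/\Q_v}E(K_w)$ has dimension at least $\#T_{E/K}$, and its classes are converted into elements of $\Sel_p(E/K)$ up to error terms $H^i(G,E(K))$, $i=1,2$, each of dimension at most $2$, which is exactly where $\#T_{E/K}-4$ comes from. Without this norm/descent input (or an explicit construction of Selmer classes), the sketch does not yield the lemma.
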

\begin{proof}
See \cite[Proposition 4.3]{Mat09}.
\end{proof}

\subsection{Large 2-rank of the Shafarevich--Tate group in quadratic extensions}
Given an elliptic curve $E_{/\Q}$, it is known that the 2-part of the Shafarevich--Tate group becomes arbitrarily large over \emph{some} quadratic extension.
More precisely, 
\begin{Th}
\label{prop B mat09}
Given an elliptic curve $E_{/\Q}$ and a non-negative integer $n$, there exists a quadratic number field $K/\Q$ such that $\dim_{\mathbb{F}_2}\Sha(E/K)[2]\geq n$.
\end{Th}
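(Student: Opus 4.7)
The plan is to combine the Selmer-group lower bound of Lemma~\ref{lemma: 2-rank of selmer} with the short exact sequence \eqref{ses: m selmer}, and to produce a quadratic extension in which many primes contribute to $T_{E/K}$ while the Mordell--Weil rank stays under control. Specialising \eqref{ses: m selmer} to $2$-Selmer groups over $K$, using the identity $\dim_{\F_2} E(K)/2E(K) = \rank_{\Z}E(K) + \dim_{\F_2} E(K)[2]$ together with the trivial bound $\dim_{\F_2} E(K)[2]\leq 2$, and invoking Lemma~\ref{lemma: 2-rank of selmer}, one obtains
$$
\dim_{\F_2}\Sha(E/K)[2] \;\geq\; \#T_{E/K} - \rank_{\Z}E(K) - 6.
$$
It therefore suffices, for each $n\geq 0$, to exhibit a quadratic $K/\Q$ with $\#T_{E/K} - \rank_{\Z}E(K) \geq n+6$.

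To populate $T_{E/K}$, I would first assemble a pool of rational primes $\ell$ of good reduction for $E$ at which $\widetilde{E}(\F_{\ell})$ has even order, equivalently at which $E(\Q_{\ell})$ contains a point of order $2$. By the Chebotarev density theorem applied to the mod-$2$ Galois representation attached to $E$, this is a positive-density set of primes. Fix a large parameter $N$, pick distinct such primes $\ell_1,\ldots,\ell_N$ coprime to the conductor of $E$, and consider the family of fields $K_d = \Q(\sqrt{d})$ where $d$ ranges over squarefree integers whose prime factors lie in $\{\ell_1,\ldots,\ell_N\}$ (with an appropriate sign choice). Every $\ell_i \mid d$ then ramifies in $K_d$ and contributes a prime above $\ell_i$ to $T_{E/K_d}$, so $\#T_{E/K_d}$ is at least the number of distinct prime divisors of $d$.

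The main obstacle is controlling $\rank_{\Z} E(K_d)$ for the chosen $d$. The Mordell--Weil decomposition $E(K_d)\otimes\Q \simeq \bigl(E(\Q)\otimes\Q\bigr)\oplus \bigl(E^d(\Q)\otimes\Q\bigr)$, with $E^d$ the quadratic twist of $E$ by $d$, reduces this to bounding $\rank_{\Z} E^d(\Q)$. The approach is to let $d$ vary over the $2^N$ admissible squarefree integers and argue by averaging: even the crude $2$-descent inequality $\rank_{\Z} E^d(\Q)\leq \dim_{\F_2}\Sel_2(E^d/\Q)$ is amenable to prime-by-prime analysis, since the local conditions at each $\ell_i$ are determined by the explicit image of $E^d(\Q_{\ell_i})/2E^d(\Q_{\ell_i})$ in the local Galois cohomology. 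A careful local bookkeeping --- which is the heart of Matsuno's original argument and the main difficulty here --- then shows that one can select $d$ so that $\#T_{E/K_d}$ exceeds $\rank_{\Z}E(K_d)$ by any prescribed amount; taking $N$ sufficiently large in terms of $n$ yields a quadratic $K = K_d$ with $\dim_{\F_2}\Sha(E/K)[2]\geq n$, as required.
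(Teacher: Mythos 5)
Your reduction is the same as the paper's: via Lemma \ref{lemma: 2-rank of selmer} and the Kummer/Selmer sequence one gets $\dim_{\F_2}\Sha(E/K)[2]\geq \#T_{E/K}-\rank_{\Z}E(K)-6$ (compare \eqref{2-sha over K in terms of 2-sha over Q}), and the problem becomes producing a quadratic $K$ in which many good primes $\ell$ with $E(\Q_\ell)[2]\neq 0$ ramify while $\rank_{\Z}E(K)$ stays bounded. Up to that point your argument is fine, including the Chebotarev step producing a positive-density pool of admissible primes.

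The gap is in your treatment of the rank. You propose to bound $\rank_{\Z}E^d(\Q)$ by averaging the $2$-descent inequality $\rank_{\Z}E^d(\Q)\leq \dim_{\F_2}\Sel_2(E^d/\Q)$ over the $2^N$ admissible twists, but this cannot succeed: the very local conditions you impose (every prime dividing $d$ is a good prime with $E(\Q_\ell)[2]\neq 0$, ramified in $K_d$) are exactly the conditions that force the $2$-Selmer group of the twist to be large, so for \emph{every} admissible $d$ with many prime factors the descent bound is of size comparable to $\#T_{E/K_d}$ and never certifies the needed gap $\#T_{E/K_d}-\rank_{\Z}E(K_d)\geq n+6$. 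Indeed, the whole point of the construction is that the forced growth of $\Sel_2(E/K_d)$ should be absorbed by $\Sha$ rather than by rank, and no amount of local bookkeeping in a $2$-descent can distinguish the two. What is actually needed (and what the paper, following Matsuno, uses) is an analytic input: Waldspurger's nonvanishing theorem produces a quadratic twist $E'$ of $E$, subject to the finitely many prescribed ramification conditions at $\ell_1,\dots,\ell_k$, with $L(E',1)\neq 0$, and Kolyvagin's theorem then gives $\rank_{\Z}E'(\Q)=0$; hence $\rank_{\Z}E(K)=\rank_{\Z}E(\Q)$ is bounded independently of $k$, and the displayed inequality yields the theorem. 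Without this (or some equivalent way of bounding the actual Mordell--Weil rank rather than the $2$-Selmer rank of the twist), your proof does not close.
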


\begin{proof}
See {\cite[Proposition B]{Mat09}}.
\end{proof}

A reasonable question to ask is whether the above result can be made effective, i.e., 
\begin{Ques*}
Given an elliptic curve $E_{/\Q}$ and a fixed non-negative integer $n$,
varying over all quadratic number fields ordered by conductor, what is the minimal conductor of a number field such that one can guarantee $\Sha(E/K)[2]\geq n$?
\end{Ques*}

\subsubsection{Reviewing the proof of Matsuno's construction}
In this section, we briefly review the proof of Matsuno's theorem, for details we refer the reader to the original article \cite{Mat09}.
Fix an elliptic curve $E_{/\Q}$ of conductor $N=N_E$.
Let $K/\Q$ be a quadratic extension. 
Let $S$ be a finite set of primes in $\Q$ containing precisely the prime number 2, the primes of bad reduction of $E$, and the archimedean primes.

Let $\ell_1, \ldots, \ell_k$ be odd rational primes that are coprime to $N$ and split completely in $\Q(E[2])/\Q$.
Recall that $\Q(E[2])/\Q$ is a Galois extension with Galois group isomorphic to either $\Z/2\Z$ or $\Z/3\Z$ or $S_3$.
By the Chebotarev Density Theorem there is a positive proportion of primes that split completely in $\Q(E[2])/\Q$.
Having picked the primes $\ell_1, \ldots, \ell_k$, it is clear that there exists a quadratic extension $K/\Q$ such that the chosen primes ramify.
However, more is true.
Results of J.-L.~Waldspurger (see \cite[Theorem in Section~0]{Bump}) and V.~Kolyvagin (see \cite{Kol89}) guarantee the existence of $K/\Q$ such that the chosen primes ramify \emph{and} the Mordell--Weil rank of $E^\prime(\Q)$ is 0 where $E^\prime$ is the quadratic twist of $E$ corresponding to $K$. 
It follows that
\[
\rank_{\Z}(E(K)) = \rank_{\Z}(E(\Q)) + \rank_{\Z} (E^\prime(\Q)) = \rank_{\Z} (E(\Q)). 
\]
Observe that the primes $\ell_1, \ldots, \ell_k$ are primes of good ordinary reduction that lie in $T_{E/K}$.
Therefore, it follows from Lemma \ref{lemma: 2-rank of selmer} that
\[
\rank_{2} \Sel_2(E/K) \geq k - 4.
\]
Using the Kummer sequence, we see that
\begin{equation}
{\begin{split}
\rank_2 \Sha(E/K)[2] &\geq \rank_{2}\Sel_2(E/K) - \rank_{\Z}(E/K) -2\\
& \geq k - 6 - \rank_{\Z} (E(\Q)).
\end{split}}
\end{equation}

Even though the results of Waldspurger and Kolyvagin guarantee that there exists a quadratic extension $K/\Q$ with the desired properties, it is not easy to determine all the primes which ramify in $K$.
Using the proof of Matsuno as our inspiration, combined with more recent results of K.~Ono we will prove an effective version of the theorem in Section~\ref{S: unconditional Matsuno}.
Assuming Goldfeld's Conjecture, we prove an effective version using simpler arguments in Section~\ref{S: conditional effective}. 

\subsubsection{Effective version conditional on Goldfeld's conjecture}
\label{S: conditional effective}
Let us begin by reminding the reader of Goldfeld's Conjecture.
This conjecture predicts that given an elliptic curve, $50\%$ of the quadratic twists have rank 0 and $50\%$ of the quadratic twists have rank 1.
Therefore, if Goldfeld's Conjecture is true then a $100\%$ of the time, the Mordell--Weil rank of $E^\prime(\Q)$ is either 0 or 1, where $E^\prime$ is the quadratic twist of $E$ (corresponding to a quadratic field $K$).
Since
\[
\rank_{\Z}(E(K)) = \rank_{\Z}(E(\Q)) + \rank_{\Z} (E^\prime(\Q)), 
\]
for a $100\%$ of the time $\rank_{\Z}(E(K))=\rank_{\Z}(E(\Q))$ or $\rank_{\Z}(E(K))=\rank_{\Z}(E(\Q))+1$.
Suppose that the primes $\ell_1, \ldots, \ell_k$ are chosen as in Matsuno's theorem.
These are primes of good ordinary reduction that lie in $T_{E/K}$.
Therefore, it follows from Lemma \ref{lemma: 2-rank of selmer} that
\[
\rank_{2} \Sel_2(E/K) \geq k - 4.
\]
Using the Kummer sequence, we see that for a $100\%$ of the quadratic fields $K$,
\begin{equation}
\label{2-sha over K in terms of 2-sha over Q}
{\begin{split}
\rank_2 \Sha(E/K)[2] &\geq \rank_{2}\Sel_2(E/K) - \rank_{\Z}(E(K)) -2\\
& \geq k - 7 - \rank_{\Z} (E(\Q)).
\end{split}}
\end{equation}

Given $E_{/\Q}$ of conductor $N$ and $n\in \Z_{\geq 0}$, we want to find an imaginary quadratic field $K/\Q$ with minimal conductor $\mathfrak{f}_{K}$ such that we can guarantee $\rank_2 \Sha(E/K)[2]\geq n$.
Let $\mathcal{P} = \{\ell_1, \ldots, \ell_k\}$ be a set of (distinct) rational primes not dividing $N$ with the additional property that they split completely in $\Q(E[2])/\Q$ and that precisely the primes in $\mathcal{P}$ ramify in $K$. 
From \eqref{2-sha over K in terms of 2-sha over Q}, we need that
\[
\rank_2 \Sha(E/K)[2] \geq k - 7 - \rank_{\Z} (E(\Q)) \geq n.
\]
Equivalently,
\[
k \geq n + \rank_{\Z} (E(\Q)) + 7.
\]
Note that this inequality ensures that $k$ can take all but finitely many values.
Therefore, in view of Goldfeld's Conjecture there exists a non-negative integer $\epsilon_E$ such that the set $\mathcal{P}$ contains $k+\epsilon_{E}$ (which we still call $k$ by abuse of notation) many primes instead.
To answer our question, we need to carefully pick the distinct primes $\ell_1, \ldots, \ell_k$.
Given any integer $M$, the number of distinct prime factors denoted by $\omega(M)$ is asymptotically $\log \log M$.
Recall that the Prime Number Theorem asserts that the average gap between consecutive primes among the first $x$ many primes is $\log x$.
Since $\ell_1$ is a prime of good reduction, $\ell_1\nmid N$.
The Prime Number Theorem implies that $\ell_1 \sim \log \left( \omega (N)\right) \sim \log_{(3)} (N)$.
However, we need to do more. 
We require that $\ell_1$ splits completely in $\Q(E[2])/\Q$.
Using the Chebotarev Density Theorem, we can conclude that $\ell_1 \sim c \cdot \log \left( \omega (N)\right)$, where $c$ is either $2$, $3$ or $6$ depending on the degree of the Galois extension $\Q(E[2])/\Q$.
Of course, asymptotically $\ell_1 \sim \log  \left( \omega (N)\right)$.
Next, $\ell_2 \nmid N\ell_1$ and splits completely in $\Q(E[2])/\Q$. 
Note that the number of distinct prime divisors of $N\ell_1 \sim \omega(N)+ 1$.
Using the same argument, we see that $\ell_2 \sim \log\left( \omega(N) + 1\right)$.
Continuing this process, we have
\begin{align}
\mathfrak{f} = \prod_{i=1}^k \ell_i &\sim \prod_{i=1}^k\left( \log \left(\omega(N) + (i-1)\right)\right) \\
&\sim \frac{(\log (\omega(N)-1+k))^{\omega(N)+2+k}}{\exp \li(\omega(N)-1+k)} \\
&\sim \frac{(\log n)^{n+c}}{\exp \li(n)} \label{last expressions}
\end{align} 
where $\li(x) := \int_0^x \frac{dt}{\log t}$ is the logarithmic integral and $c$ is a constant depending on $E$.
Let us explain the above estimates in greater detail. Setting
\[
P(k):= \prod_{i=1}^k \log (\omega(N)+(i-1)),
\] we wish to estimate the sum
\[
\log(P(k))= \sum_{i=1}^k \log \left(\log (\omega(N)+(i-1))\right).
\]

Using Abel's partial summation formula (see, for example \cite[Theorem 4.2]{Apostol}) with $f(t) = \log(\log(\omega(N) - 1 + t))$, $a(n) = 1$, and $A(x) = \sum_{n\leq x}a(n)$),  one obtains 
\begin{align*}
\log P(k) &= \sum_{i=1}^k \log\log(\omega(N) - 1 + i)\\
&= k \log\log(\omega(N) - 1 + k) - \int_0^k \frac{\lfloor x \rfloor}{(\omega(N) - 1 + x)\log(\omega(N) - 1 + x)}\, dx \\
&= k \log\log(\omega(N) - 1 + k) - \int_0^k \frac{x}{(\omega(N) - 1 + x)\log(\omega(N) - 1 + x)}\, dx \\
& \hspace{2em}+ O\left( \int_0^k \frac{dx}{(\omega(N) - 1 + x)\log(\omega(N) - 1 + x)}\right) \\
&= k \log\log(\omega(N) - 1 + k) - \left( \li(\omega(N) - 1 + x) - (\omega(N) - 1)\log\log(\omega(N) - 1 + x) \right)\Big|_0^k \\
& \hspace{2em} + O\left( \int_0^k \frac{dx}{(\omega(N) - 1 + x)\log(\omega(N) - 1 + x)}\right) \\
&= k \log\log(\omega(N) - 1 + k)  - \li(\omega(N) - 1 + k) + (\omega(N) - 1)\log\log(\omega(N) - 1 + k) \\
& \hspace{2em} + O(\log\log(\omega(N) - 1 + k)) \\
&= (k + \omega(N) + 1)\log\log(\omega(N) - 1 + k) - \li(\omega(N) - 1 + k) + O(\log\log(\omega(N) - 1 + k)).
\end{align*}
This tells us that 
\begin{align*}
P(k) &= \exp\left((k + \omega(N) + 1)\log\log(\omega(N) - 1 + k) - \li(\omega(N) - 1 + k) + O(\log\log(\omega(N) - 1 + k))\right)\\
&= \frac{\exp((k + \omega(N) + 1)\log\log(\omega(N) - 1 + k))}{\exp( \li(\omega(N) - 1 + k))}\left( \exp(O(\log\log(\omega(N) - 1 + k)))\right) \\
&= \frac{\exp((k + \omega(N) + 1)\log\log(\omega(N) - 1 + k))}{\exp( \li(\omega(N) - 1 + k))}\left( O(\log(\omega(N) - 1 + k))\right) \\
&= \frac{\log(\omega(N) - 1 + k)^{k + \omega(N) + 2}}{\exp( \li(\omega(N) - 1 + k))}. 
\end{align*}

For obtaining \eqref{last expressions}, we note that for the given $n$, the difference between $n$ and $k$ depends on the rank of $E$, and further $\omega(N)$ depends on $E$.
Thus, we can rewrite $\omega(N)+2+k$ as $n+c$ where $c$ is a constant depending only on the elliptic curve $E$.

We have therefore proven the following result.
\begin{Th}
\label{thm: effective matsuno}
Suppose that Goldfeld's Conjecture is true.
Given an elliptic curve $E_{/\Q}$ and a positive integer $n$, there exists a quadratic extension $K/\Q$ with conductor $\mathfrak{f}_K \sim \frac{(\log n)^{n+c}}{\exp \li (n)}$ such that $\rank_2 \Sha(E/K)[2] \geq n$.
Here $c$ is an explicit constant depending only on $E$.
\end{Th}

Using a result of Ono we can prove an unconditional statement which we now explain.

\subsubsection{An unconditional effective version} 
\label{S: unconditional Matsuno}
Given an elliptic curve $E_{/\Q}$ of conductor $N$ and a positive integer $n$, we want to find an imaginary quadratic field $K/\Q$ with minimal conductor $\mathfrak{f}_{K}$ such that we can guarantee $\rank_2 \Sha(E/K)[2]\geq n$.

For this section, we consider elliptic curves $E_{/\Q}$ which have no exceptional primes $p$, i.e., no primes $p$ such that the $\mod p$ Galois representation attached to $E$ is non-surjective.
This condition is a mild one: W.~Duke has shown in \cite[Theorem~1]{Duk97} that almost all elliptic curves defined over $\Q$ have no exceptional primes.
 
For such elliptic curves $E_{/\Q}$, \cite[Theorem~1]{Ono01} asserts that there exists a fundamental discriminant $D_E$ such that the twisted curve $E^{d}$ has rank $0$ where $d = D_E\ell_1\cdots \ell_{k}$ for some even integer $k$ and where the primes $\ell_i$ are chosen from a set (of primes) with density $1$.
Indeed, the aforementioned theorem asserts (more generally) that there is a set of primes (call it $S$) with positive Frobenius density\footnote{i.e., there exists a finite Galois extension $L/\Q$ such that for all but finitely many primes in this set, these primes represent a fixed Frobenius conjugacy class in $\Gal(L/\Q)$} such that $E^{d}$ has rank $0$ where $d = D_E\ell_1\cdots \ell_{k}$ for some even integer $k$ and the primes $\ell_i$ are chosen from $S$.
But it follows from \cite[proof of Theorem~2.2]{Ono01} that the field $L/\Q$ in the definition of the Frobenius density is the field which contains the coefficients of the newform associated to $E$.
Since the elliptic curves we are working with are defined over $\Q$, it follows from the Modularity Theorem that the newform will have integral coefficients i.e., $L = \Q$. 
Since there is only one Frobenius class in $\Q$, namely the trivial class, \cite[Theorem~1]{Ono01} says that the density of the set $S$ is $1$.

Let $K$ be the quadratic field associated with the twist $d$.
Since $E^d$ has rank 0, we know that $\rank_{\Z}(E(K))=\rank_{\Z}(E(\Q))$.
It follows that
\[
\rank_2 \Sha(E/K)[2] \geq k - 6 - \rank_{\Z} (E(\Q)).
\]
If $K$ is a quadratic field arising from Ono's theorem such that $\rank_2 \Sha(E/K)[2]>n$, it is required that
\[
k \geq n + \rank_{\Z} (E(\Q)) + 6.
\]
To answer our question, we pick the distinct primes $\ell_1, \ldots, \ell_k$ using the exact same process as before.
We have that
\[
\mathfrak{f} = D_E\prod_{i=1}^k \ell_i \sim \prod_{i=1}^k\left( \log \left(\omega(N) + (i-1)\right)\right) \sim \frac{(\log (\omega(N)+k))^{\omega(N)+k}}{\exp \li(\omega(N)+k)} \sim \frac{(\log n)^{n+c}}{\exp \li(n)}\] 
where $\li(x) := \int_0^x \frac{dt}{\log t}$ is the logarithmic integral and $c$ is a constant depending on $E$.

We have therefore proven the following result.
\begin{Th}
\label{thm: effective matsuno unconditional}
Given an elliptic curve $E_{/\Q}$ with no exceptional primes and a positive integer $n$, there exists a quadratic extension $K/\Q$ with conductor $\mathfrak{f}_K \sim \frac{(\log n)^{n+c}}{\exp \li (n)}$ such that $\rank_2 \Sha(E/K)[2] \geq n$.
Here $c$ is an explicit constant depending only on $E$.
\end{Th}

\subsection{\texorpdfstring{Arbitrarily large $\Sha$ in $\Z/p\Z$-Extensions}{}}
\label{subsection: arbitrarily large Sha in odd p case}
Let $E_{/\Q}$ be a fixed rank 0 elliptic curve of conductor $N$.
Let $p$ be a fixed odd prime.
Let $F$ be any number field, we have the obvious short exact sequence
\[
0 \rightarrow E(F)/pE(F) \rightarrow \Sel_p(E/F) \rightarrow  \Sha(E/F)[p] \rightarrow 0.
\]
It follows that
\begin{equation}
\label{p-rank equation}
\rank_{p} \Sel_p(E/F) = \rank_{\Z}(E(F)) + \rank_{p} E(F)[p] + \rank_p\Sha(E/F)[p].
\end{equation}
When $F/\Q$ is a cyclic degree-$p$ Galois extension, we know from Lemma \ref{lemma: 2-rank of selmer} that
\[
\rank_p \Sel_p(E/F) \geq \# T_{E/F} -4.
\]
Given an integer $n$, there exists a number field $F_{(n)}$ such that (see \cite[Theorem 1.2]{Ces17}) 
\[
\rank_{p}\Sel_{p}(E/F_{(n)}) \geq \# T_{E/F} -4\geq n.
\]
Denote the conductor of $F_{(n)}$ by $\mathfrak{f}(F_{(n)})$.
Varying over \emph{all} $\Z/p\Z$-extensions of $\Q$, there are infinitely many number fields $L/\Q$ such that $T_{E/L} \supseteq T_{E/F_{(n)}}$, i.e., $\mathfrak{f}(F_{(n)})\mid \mathfrak{f}(L)$.
For each such $L$,
\[
\rank_{p}\Sel_{p}(E/L) \geq n.
\]
Recall the conjecture of David--Fearnley--Kisilevsky from Section \ref{section: preliminaries}.
If $p\geq 7$, it predicts the boundedness of the set
\[
N_{E,p}(X) := \{L/\Q \textrm{ cyclic of degree } p: \mathfrak{f}(L)< X \textrm{ and } \rank_{\Z}(E/L) > \rank_{\Z}(E/\Q)\}.
\]
If this conjecture is true, then varying over \emph{all} $\Z/p\Z$-extensions of $\Q$, there are only finitely many cyclic $p$-extensions $L/\Q$ in which there is a rank jump upon base-change.
Therefore, given an integer $n$, one can find an integer $M = M(n)$ and a number field $L/\Q$ such that 
\begin{enumerate}[\textup{(}i\textup{)}]
\item $\mathfrak{f}(L)> M(n)$,
\item $\mathfrak{f}(F_{(n)})\mid \mathfrak{f}(L)$,
\item $\rank_{\Z}(E(L)) = \rank_{\Z}(E(\Q))$.
\end{enumerate}
Thus, given $n$, there exists a cyclic extension $L/\Q$ of degree $p\geq 7$ such that \eqref{p-rank equation} becomes
\[
\rank_{p} \Sel_p(E/L) = \rank_{\Z}(E(\Q)) + \rank_p E(L)[p] + \rank_p\Sha(E/L)[p] \geq n.
\]
Since $\rank_{\Z}(E(\Q))$ is independent of $L$ and $\rank_p E(L)[p]$ is at most 2, it means that given $n$, there exists a $\Z/p\Z$-extension $L/\Q$ such that $\rank_p\Sha(E/L)[p]\geq n$.

We feel that the full force of the conjecture of David--Fearnley--Kisilevsky is required.
In particular, we do not see if the result of Mazur--Rubin is sufficient.
This is because it is not obvious to us as to why even if there are infinitely many number fields $\mathcal{L}/\Q$ with $\rank_{\Z}(E(\mathcal{L})) = \rank_{\Z}(E(\mathcal{\Q}))$, there should be any (of these) satisfying $\mathfrak{f}({F_{(n)}})\mid \mathfrak{f}(\mathcal{L})$.

\section{Tables}
\label{section: Tables}

\subsection{}In Table \ref{table: values for positive prop}, we compute the following expression
\[
\left(1 - \frac{1}{p}\right)
\left( 1 - \prod_{q \equiv 1 (\textrm{mod }{p})}
\left( \frac{q-1}{2q^2} + 1 - \frac{1}{q}\right) \right).
\]
for $3\leq p < 50$ and $q \leq 179,424,673$ (i.e., the first 10 million primes).

\begin{longtable}{| c | c || c | c |} 
\caption{Values for expression \eqref{eqn: expression}}
\label{table: values for positive prop}\\
\hline 
Primes & Value for Expression & Primes & Value for Expression
\\ [0.5ex]
\hline 
3 & 0.293282 & 23 & 0.0404621\\
5 & 0.189719 & 29 & 0.0303331\\
7 & 0.121798 & 31 & 0.0198836\\
11 & 0.0866316 & 37 & 0.0197385\\
13 & 0.0647841 & 41 & 0.019329\\
17 & 0.0453478 & 43 & 0.0165664\\
19 & 0.0342828 & 47 & 0.0141439\\
\hline
\end{longtable}

\subsection{} In Table \ref{table: enemy primes}, we record the proportion of enemy primes for CM elliptic curves with conductor $<100$ and $p<50$.
Here, ``-'' indicates that for a given elliptic curve, $p$ is an \emph{irrelevant} prime.
Since $p=47$ is an \emph{irrelevant prime} for the four elliptic curves of interest, we have excluded it from our table.
In the first row, the value of $\frac{p}{2(p+1)(p-1)^2}$ is recorded.
Note that the reduction type depends only on the isogeny class.
The same is true for the $\lambda$-invariant of the $p$-primary Selmer group. 
However, it is possible that one or more of the curves in a given isogeny class has positive $\mu$-invariant, but the others do not (see also \cite[Conjecture 1.11]{Gre99}).
To keep the tables succinct, since the Iwasawa invariants are the same for all curves in the isogeny class, they are clubbed together.

The data in the table is obtained from the code available \href{https://github.com/kundudeb/kundudeb.github.io/blob/master/SAGE_code_BKR.pdf}{here}.

\begin{longtable}{| c | r | r | r | r | r | r | r |}
\caption{Proportion of enemy primes} 
\label{table: enemy primes}\\
\hline
E & 5 & 7 & 11 & 13 & 17 & 19 \\ [0.5ex]
\hline
$p/2(p+1)(p-1)^2$ & 0.002604 & 0.012153 & 0.004583 & 0.003224 & 0.001845 & 0.001466 \\ [0.5ex]
\hline
\bf{27a} & 0.020833 & 0.013881 & - & 0.003471 & - & 0.001533 \\
\hline
\bf{36a} & - & 0.013880 & - & 0.003471 & - & 0.001538 \\
\hline
\bf{49a} & -	& -	& 0.004989	& -	& -& 	-\\
\hline
\bf{64a} & 0.031260 & - & - & 0.003478 & 0.001950 & - \\
\hline
\end{longtable}

\newpage
\begin{longtable}{| c | r | r | r | r | r | r |}
\caption*{Proportion of enemy primes continued} 
\label{table: enemy primes cont}\\
\hline
E & 23 & 29 & 31 & 37 & 41 & 43\\ [0.5ex]
\hline
$p/2(p+1)(p-1)^2$ & 0.000990 & 0.000616 & 0.000538 & 0.000376 & 0.000305 & 0.000277 \\ [0.5ex]
\hline
\bf{27a} & - & - & 0.000551 & 	0.000424 & - & 	0.000279	\\
\hline
\bf{36a} & - & - & 0.000550 & 0.000422 & - & 0.000282 \\
\hline
\bf{49a} & 0.001028 &	0.000638 &	- &	0.000423 &	- &	0.000282 \\
\hline
\bf{64a} & - & 0.000640 & - & 0.000426 & 0.000312 & - \\
\hline
\end{longtable}

\section*{Acknowledgements}
LB and DK thank Rahul Arora, Christopher Keyes, Debasis Kundu, Allysa Lumley, Jackson Morrow, Kumar Murty, Mohammed Sadek, and Frank Thorne for helpful discussions during the preparation of this article.
LB and DK thank Henri Darmon for his continued support.
LB acknowledges the support of the CRM-ISM Postdoctoral Fellowship.
DK acknowledges the support of the PIMS Postdoctoral Fellowship.
AR's research is supported by the CRM Simons postdoctoral fellowship.
This work was initiated by LB and DK during the thematic semester ``Number Theory -- Cohomology in Arithmetic'' at Centre de Recherches Math{\'e}matiques (CRM) in Fall 2020.
LB and DK thank the CRM for the hospitality and generous supports.
AR thanks Ravi Ramakrishna, R. Sujatha, and Tom Weston for their support and guidance, and thanks Larry Washington for helpful conversations.
We thank Henri Darmon, Chantal David, Antonio Lei, Robert Lemke Oliver, Jackson Morrow, Ross Paterson, Ravi Ramakrishna, and Larry Washington for their comments on an earlier draft of this paper.
We thank the referee for the timely review and for the suggestions that led to various improvements in the exposition.

\bibliographystyle{alpha}
\bibliography{references}
\end{document}